\newlist{abbrv}{itemize}{1}
\setlist[abbrv,1]{label=,labelwidth=1.7in,align=parleft,itemsep=0.1\baselineskip,leftmargin=!}
\newtheorem{theorem}{Theorem}[section]
\newtheorem{lemma}[theorem]{Lemma}
\newtheorem{proposition}[theorem]{Proposition}
\theoremstyle{definition}
\newtheorem{definition}[theorem]{Definition}
\theoremstyle{remark}
\newtheorem{remark}[theorem]{Remark}
\numberwithin{equation}{section}
\begin{document}

\title{Tautological classes of definite $4$-manifolds}
\author{David Baraglia}

\address{School of Mathematical Sciences, The University of Adelaide, Adelaide SA 5005, Australia}

\email{david.baraglia@adelaide.edu.au}


\date{\today}

\begin{abstract}
We prove a diagonalisation theorem for the tautological, or generalised Miller--Morita--Mumford classes of compact, smooth, simply-connected definite $4$-manifolds. Our result can be thought of as a families version of Donaldson's diagonalisation theorem. We prove our result using a families version of the Bauer--Furuta cohomotopy refinement of Seiberg--Witten theory. We use our main result to deduce various results concerning the tautological classes of such $4$-manifolds. In particular, we completely determine the tautological rings of $\mathbb{CP}^2$ and $\mathbb{CP}^2 \# \mathbb{CP}^2$. We also derive a series of linear relations in the tautological ring which are universal in the sense that they hold for all compact, smooth, simply-connected definite $4$-manifolds.
\end{abstract}

\maketitle


\section{Introduction}\label{sec:intro}

\subsection{Tautological classes}

Let $X$ be a compact, simply-connected smooth $4$-manifold with positive definite intersection form. Assume that $b_2(X) > 0$. Then by the work of Donaldson \cite{don} and Freedman \cite{fre}, $X$ is homeomorphic to the connected sum $\#^n \mathbb{CP}^2$ of $n \ge 1$ copies of $\mathbb{CP}^2$, where $n = b_2(X)$.

Let $\pi : E \to B$ be a compact, smooth family with fibres diffeomorphic to $X$. By this we mean that $E$ and $B$ are compact smooth manifolds, $\pi$ is a proper submersion and each fibre of $\pi$ with its induced smooth structure is diffeomorphic to $X$. Note that $E$ has a fibrewise orientation which is uniquely determined by the requirement that the fibres of $E$ are positive definite $4$-manifolds. In this paper, we will use parametrised Seiberg--Witten theory to study the {\em tautological classes}, or {\em generalised Miller--Morita--Mumford classes} of such families. These are defined as follows. Let $T(E/B) = Ker(\pi_* : TE \to TB)$ denote the vertical tangent bundle. Then for each rational characteristic class $c \in H^*( BSO(4) ; \mathbb{Q})$, we define the associated tautological class as
\[
\kappa_c(E) = \int_{E/B} c( T(E/B) ) \in H^{*-4}(B ; \mathbb{Q})
\]
where $\int_{E/B}$ denotes integration over the fibres. Let $Diff(X)$ denote the group of diffeomorphisms of $X$ with the $\mathcal{C}^\infty$-topology (note that all diffeomorphisms of $X$ are orientation preserving since $X$ is positive definite) and $BDiff(X)$ the classifying space. The tautological classes can be constructed for the universal bundle $U_X = EDiff(X) \times_{Diff(X)} X$, giving classes
\[
\kappa_c = \kappa_c(U_X) \in H^*( BDiff(X) ; \mathbb{Q}).
\]

The {\em tautological ring} of $X$
\[
R^*(X) \subseteq H^*( BDiff(X) ; \mathbb{Q})
\]
is defined as the subring of $H^*( BDiff(X) ; \mathbb{Q})$ generated by tautological classes $\kappa_c$, $c \in H^*(BSO(4) ; \mathbb{Q})$. Since $H^*( BSO(4) ; \mathbb{Q})$ is generated over $\mathbb{Q}$ by $p_1$ and $e$, it follows that $R^*(X)$ is generated by the classes $\{ \kappa_{ p_1^a e^b} \}_{a,b \ge 0}$. Similarly, for any family $E \to B$ we can define the tautological ring of $E$
\[
R^*(E) \subseteq H^*( B ; \mathbb{Q})
\]
to be the subring generated by the tautological classes $\kappa_c(E), c \in H^*(BSO(4) ; \mathbb{Q})$.

Tautological rings have been studied extensively for families of oriented surfaces, eg, \cite{mum}, \cite{mil}, \cite{loo}, \cite{fab}, \cite{mor} and there is a growing literature on tautological classes in higher dimensions \cite{grw}, \cite{gri}, \cite{ggrw}, \cite{rw}, \cite{rw}, \cite{bfj}, \cite{hllrw}. However as far as we are aware, our paper is the first to use gauge theory to obtain results on the tautological classes of $4$-manifolds. 

Let $B$ be a compact smooth manifold. A topological fibre bundle $E \to B$ with transition functions valued in $Diff(X)$ may be obtained by pullback of the universal family $U_X \to BDiff(X)$ with respect to a continuous map $B \to BDiff(X)$. As explained in \cite[\textsection 4.2]{bako}, it follows from a result of M\"uller--Wockel \cite{muwo} that such a family $E \to B$ admits a smooth structure for which $\pi$ is a submersion and the fibres of $E$ with their induced smooth structure are diffeomorphic to $X$. Since $E$ is smooth, we may use parametrised gauge theory to study the tautological classes $\kappa_c(E) \in H^*(B ; \mathbb{Q})$. If a relation amongst tautological classes holds in $R^*(E)$ for all compact, smooth families $\pi : E \to B$ with fibres diffeomorphic to $X$, then it must also hold in $R^*(X)$. This is because rational cohomology classes of $BDiff(X)$ are detected by continuous maps from compact, smooth manifolds into $BDiff(X)$. The upshot of this is that we can use gauge theory to indirectly study the tautological ring of $X$. 

\subsection{Main results}

Our first main result is the determination of the tautological rings of $\mathbb{CP}^2$ and $\mathbb{CP}^2 \# \mathbb{CP}^2$.

\begin{theorem}\label{thm:cp2}
The tautological rings of $\mathbb{CP}^2$ and $\mathbb{CP}^2 \# \mathbb{CP}^2$ are given by
\begin{itemize}
\item[(1)]{$R^*(\mathbb{CP}^2) = \mathbb{Q}[ \kappa_{p_1^2}, \kappa_{p_1^4}]$.}
\item[(2)]{$R^*(\mathbb{CP}^2 \# \mathbb{CP}^2 ) = \mathbb{Q}[ \kappa_{p_1^2}, \kappa_{p_1^3}]$.}
\end{itemize}
\end{theorem}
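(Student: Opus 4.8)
The plan is to combine the families diagonalisation theorem, which will give the containment $R^*(X)\subseteq$ (the claimed polynomial ring), with explicit projective-bundle and fibrewise connected-sum families, which will give the reverse containment. Write $n=b_2(X)$, so $n=1$ for $\mathbb{CP}^2$ and $n=2$ for $\mathbb{CP}^2\#\mathbb{CP}^2$. First note that by the Hirzebruch signature theorem and Gauss--Bonnet, $\kappa_{p_1}(E)=3\sigma(X)=3n$ and $\kappa_e(E)=\chi(X)=n+2$ for every smooth family $\pi\colon E\to B$ with fibre $X$; so these are constants, and $R^*(X)$ is generated by the classes $\kappa_{p_1^ae^b}$ with $a+b\ge 2$.

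For the upper bound I would argue as follows. Given an arbitrary smooth family $\pi\colon E\to B$ with fibre $X$, the diagonalisation theorem provides classes in $H^2(E;\mathbb{Q})$ restricting fibrewise to a standard diagonal basis of $H^2(X;\mathbb{Q})$ and, crucially, expressions for $p_1(T(E/B))$ and $e(T(E/B))$ in $H^*(E;\mathbb{Q})$ in terms of these classes and classes pulled back from $B$. Substituting into $\kappa_{p_1^ae^b}(E)=\int_{E/B}p_1(T(E/B))^a\,e(T(E/B))^b$ and iterating the fibrewise cohomology relations together with the projection formula $\int_{E/B}(\pi^*\beta\cdot\xi)=\beta\int_{E/B}\xi$, one should be able to write each $\kappa_{p_1^ae^b}(E)$ as a polynomial over $\mathbb{Q}$ in a short list of ``basic'' tautological classes. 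For $n=1$ this list should collapse to $\{\kappa_{p_1^2}(E),\kappa_{p_1^4}(E)\}$, with everything else expressed in terms of these and with universal relations such as $\kappa_{p_1^3}=c\,\kappa_{p_1^2}^2$ and $\kappa_{p_1e},\kappa_{e^2}\in\mathbb{Q}\,\kappa_{p_1^2}$ falling out along the way. For $n=2$ the analogous (now genuinely two-variable) reduction should collapse the list to $\{\kappa_{p_1^2}(E),\kappa_{p_1^3}(E)\}$; the point distinguishing this from $n=1$ is that the Euler-class contributions of the two diagonal summands enter the reduction only through combinations already accounted for once $\kappa_{p_1^2}$ and $\kappa_{p_1^3}$ are available, so $\kappa_{p_1^4}$ is no longer independent. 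Since rational cohomology classes of $BDiff(X)$ are detected by compact smooth families, this would give $R^*(\mathbb{CP}^2)\subseteq\mathbb{Q}[\kappa_{p_1^2},\kappa_{p_1^4}]$ and $R^*(\mathbb{CP}^2\#\mathbb{CP}^2)\subseteq\mathbb{Q}[\kappa_{p_1^2},\kappa_{p_1^3}]$.

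For the lower bound I would produce families realising algebraically independent generators. For $\mathbb{CP}^2$, take $E=\mathbb{P}(V)\to B$ for a complex rank-$3$ bundle $V\to B$; with a suitable convention for $\mathcal{O}_{\mathbb{P}(V)}(1)$ the relative Euler sequence gives $p_1(T(E/B))=3\zeta^2+2\pi^*c_1(V)\,\zeta+\pi^*(c_1(V)^2-2c_2(V))$ and $e(T(E/B))=3\zeta^2+2\pi^*c_1(V)\,\zeta+\pi^*c_2(V)$ with $\zeta=c_1(\mathcal{O}_{\mathbb{P}(V)}(1))$, and the Grothendieck relation then shows $\kappa_{p_1^2}(E)$ is a nonzero multiple of $c_1(V)^2-3c_2(V)$ while $\kappa_{p_1^4}(E)$ involves in addition a Chern-class combination algebraically independent of it (both expressions being invariant under twisting $V$ by a line bundle, as they must be, since $\mathbb{P}(V)\cong\mathbb{P}(V\otimes L)$ for any line bundle $L$). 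Taking $B$ a product of complex projective spaces and $V$ a sum of line bundles chosen so that these two combinations are algebraically independent over $\mathbb{Q}$ then gives $\mathbb{Q}[\kappa_{p_1^2},\kappa_{p_1^4}]\subseteq R^*(\mathbb{CP}^2)$, which with the upper bound proves (1). For $\mathbb{CP}^2\#\mathbb{CP}^2$, I would use the fibrewise connected sum map $BDiff(\mathbb{CP}^2,D^4)\times BDiff(\mathbb{CP}^2,D^4)\to BDiff(\mathbb{CP}^2\#\mathbb{CP}^2)$ obtained by gluing the two universal families along their standard discs: since $p_1(T(E/B))$ restricts trivially to the (trivialised) connecting necks, $\kappa_{p_1^k}$ is additive under this operation for $k\ge 1$, so $\kappa_{p_1^2}$ and $\kappa_{p_1^3}$ of $\mathbb{CP}^2\#\mathbb{CP}^2$ pull back to the power sum $a_1+a_2$ and a nonzero multiple of $a_1^2+a_2^2$ in two independent copies $a_1,a_2$ of $\kappa_{p_1^2}\in H^*(BDiff(\mathbb{CP}^2,D^4);\mathbb{Q})$; these are algebraically independent once one knows $\kappa_{p_1^2}$ has infinite order there, which one checks by pushing the projective-bundle computation through $BDiff(\mathbb{CP}^2,D^4)$. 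With the upper bound this proves (2).

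I expect the upper bound to be the main obstacle: distilling from the families Bauer--Furuta argument the precise description of $H^*(E;\mathbb{Q})$ and of $p_1(T(E/B)),e(T(E/B))$, and then running the (for $n=2$, multi-variable) bookkeeping that collapses the a priori infinite generating set to two elements and yields the universal relations. A secondary difficulty will be arranging the lower-bound construction for $\mathbb{CP}^2\#\mathbb{CP}^2$ so that the normal-bundle correction terms in the additivity formula for $\kappa_{p_1^k}$ really do vanish.
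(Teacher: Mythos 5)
Your overall architecture (upper bound from the families diagonalisation machinery, lower bound from explicit projective-bundle and fibrewise connected-sum families) matches the paper's, and your $\mathbb{CP}^2$ lower bound is essentially the paper's argument (the paper normalises $V$ to have trivial determinant and gets $B=-c_2(V)$, $C=-c_3(V)$, rather than working with twist-invariant combinations, but this is cosmetic). The upper bound as you present it is only a plan: the actual content is the identification $H^*(E;\mathbb{Q})\cong H^*(B;\mathbb{Q})[x]/(x^3-Bx-C)$ with $p_1=3x^2+2B$, $e=3x^2-B$ and $\kappa_{p_1^ae^b}=\phi_{a,b}(B,C)$ for $n=1$, and for $n=2$ the fact that every $\kappa_{p_1^ae^b}$ is a symmetric polynomial in $D_{12}^2,D_{21}^2$ with $\kappa_{p_1^2},\kappa_{p_1^3}$ generating that ring; you correctly flag that this is where the work lies, so I won't press it.

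There is, however, a genuine error in your lower bound for $\mathbb{CP}^2\#\mathbb{CP}^2$. You propose gluing along the map $BDiff(\mathbb{CP}^2,D^4)\times BDiff(\mathbb{CP}^2,D^4)\to BDiff(\mathbb{CP}^2\#\mathbb{CP}^2)$ and using two independent copies $a_1,a_2$ of the image of $\kappa_{p_1^2}$ in $H^*(BDiff(\mathbb{CP}^2,D^4);\mathbb{Q})$, to be shown nonzero ``by pushing the projective-bundle computation through $BDiff(\mathbb{CP}^2,D^4)$.'' This cannot work: the paper proves $R^*(\mathbb{CP}^2,D^4)\cong\mathbb{Q}$, i.e.\ every positive-degree tautological class pulls back to zero on $BDiff(\mathbb{CP}^2,D^4)$. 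Concretely, from $e=p_1-\kappa_{p_1^2}/7$ on $BDiff(\mathbb{CP}^2,*)$ one gets $\kappa_{p_1^2}=7(p_1-e)$, and both $p_1$ and $e$ at the marked point die on $BDiff(\mathbb{CP}^2,D^4)$ since the derivative map there is constant; equivalently, a $\mathbb{P}(V)$-family admitting a section with trivialised normal bundle forces $B=0$. So your classes $a_1,a_2$ vanish identically and the construction detects nothing. The fix, which is what the paper does, is to drop the disc-trivialisation: take $E_1=\mathbb{P}(L\oplus M\oplus\mathbb{C})$ and $E_2=\mathbb{P}(L\oplus M^*\oplus\mathbb{C})$ and form the fibrewise connected sum along the tautological sections, whose normal bundles $L\oplus M$ and $L\oplus M^*$ are nontrivial but isomorphic as real bundles with opposite orientations (exactly what the gluing requires). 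One then computes $D_{12}=(l+m)/3$, $D_{21}=(l-m)/3$ and reads off algebraic independence of $\kappa_{p_1^2}$ and $\kappa_{p_1^3}$ from independence of $l,m$. Note also that your intended additivity $\kappa_{p_1^k}(E_1\#_BE_2)=\kappa_{p_1^k}(E_1)+\kappa_{p_1^k}(E_2)$ is not what one should expect once the necks are nontrivial, and indeed the paper does not compute the tautological classes of the glued family that way.
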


Variants $R^*(X , *)$ and $R^*(X , D^4)$ of the tautological ring are defined in \cite{gri} and \cite{ggrw}. Their definition is recalled in Section \ref{sec:n=1}. We determine these rings for $\mathbb{CP}^2$.
\begin{theorem}
We have ring isomorphisms
\begin{itemize}
\item[(1)]{$R^*(\mathbb{CP}^2 , * ) \cong \mathbb{Q}[ p_1 , e]$.}
\item[(2)]{$R^*(\mathbb{CP}^2 , D^4) \cong \mathbb{Q}$.}
\end{itemize}
\end{theorem}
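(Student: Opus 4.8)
The plan is to deduce both isomorphisms from Theorem \ref{thm:cp2}; the gauge theory enters only through that result and its refinement to families equipped with a section.

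Throughout, let $\pi \colon BDiff(\mathbb{CP}^2,*)\to BDiff(\mathbb{CP}^2)$ be the map induced by $Diff(\mathbb{CP}^2,*)\hookrightarrow Diff(\mathbb{CP}^2)$; up to homotopy this is the projection of the universal $\mathbb{CP}^2$-bundle from its total space, and it carries the vertical tangent bundle $V$ at the marked point, a rank $4$ bundle with $e=e(V)$ and $p_1=p_1(V)$. By definition $R^*(\mathbb{CP}^2,*)$ is the subring of $H^*(BDiff(\mathbb{CP}^2,*);\mathbb{Q})$ generated by $e$, $p_1$ and $\pi^*R^*(\mathbb{CP}^2)$, while $R^*(\mathbb{CP}^2,D^4)$ is the image of the subring generated by the $\kappa_c$'s in $H^*(BDiff(\mathbb{CP}^2,D^4);\mathbb{Q})$. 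Since $BDiff(\mathbb{CP}^2,D^4)\to BDiff(\mathbb{CP}^2,*)$ is, up to homotopy, the bundle of oriented frames of $V$, the classes $e$ and $p_1$ restrict to $0$ there. Granting (1), every positive-degree element of $R^*(\mathbb{CP}^2,*)=\mathbb{Q}[p_1,e]$ lies in the ideal $(p_1,e)$, so each $\pi^*\kappa_c$ of positive degree restricts to $0$ on $BDiff(\mathbb{CP}^2,D^4)$; the only remaining generators are the degree-$0$ classes $\kappa_{p_1}=\kappa_e=\chi(\mathbb{CP}^2)=3$, which gives (2). So everything reduces to proving (1).

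For (1), the inclusion $\mathbb{Q}[p_1,e]\subseteq R^*(\mathbb{CP}^2,*)$ is immediate, and algebraic independence of $p_1$ and $e$ is checked on an explicit family: the smooth pointed family $\mathbb{P}(L_1\oplus L_2\oplus L_3)\to(\mathbb{CP}^N)^3$ with section $[L_1]$, carrying the fibrewise complex (hence positive definite) orientation. Writing $t_i=c_1(L_i)$, the relative Euler sequence gives $e=(t_1-t_2)(t_1-t_3)$ and $p_1=(t_1-t_2)^2+(t_1-t_3)^2$, which are algebraically independent in $H^*((\mathbb{CP}^N)^3;\mathbb{Q})$ for $N$ large; letting $N\to\infty$ shows $R^*(\mathbb{CP}^2,*)\supseteq\mathbb{Q}[p_1,e]$ is polynomial on two generators. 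It remains to show $R^*(\mathbb{CP}^2,*)\subseteq\mathbb{Q}[p_1,e]$. By Theorem \ref{thm:cp2} the ring is generated by $p_1$, $e$, $\pi^*\kappa_{p_1^2}$ and $\pi^*\kappa_{p_1^4}$, so, for degree reasons, it is enough to show $\pi^*\kappa_{p_1^2}\in\mathbb{Q}\langle p_1,e\rangle$ and $\pi^*\kappa_{p_1^4}\in\mathbb{Q}\langle p_1^3,p_1^2e,p_1e^2,e^3\rangle$. I would obtain these by running the parametrised Bauer--Furuta argument behind Theorem \ref{thm:cp2} over $BDiff(\mathbb{CP}^2,*)$, tracking the marked point: just as the families diagonalisation theorem yields the relations of Theorem \ref{thm:cp2} in $R^*(\mathbb{CP}^2)$, its pointed version yields linear relations expressing $\pi^*\kappa_{p_1^2}$ and $\pi^*\kappa_{p_1^4}$ through $e$ and $p_1$. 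The coefficients are then pinned down by evaluating on the family above; for instance $\kappa_{p_1^2}$ of that family equals $\tfrac72\sum_{i<j}(t_i-t_j)^2=7(p_1-e)$, forcing $\pi^*\kappa_{p_1^2}=7(p_1-e)$, and one similarly obtains the degree-$12$ relation.

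The main obstacle is this last step. It does not follow formally from Theorem \ref{thm:cp2}: a priori $\pi^*\kappa_{p_1^2}$ lies only in the image of $\pi^*$, which is governed by the Leray--Serre spectral sequence of the universal $\mathbb{CP}^2$-bundle and, in particular, by the rationally nontrivial local system $H^2(\mathbb{CP}^2;\mathbb{Q})$ over $BDiff(\mathbb{CP}^2)$; it is exactly the Seiberg--Witten input that forces the relevant local-coefficient contributions to vanish, so that the marked point contributes no new tautological classes beyond $e$ and $p_1$.
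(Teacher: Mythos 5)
Your reduction of (2) to (1) is sound and matches the paper's (the composition $Diff(\mathbb{CP}^2,D^4)\to Diff(\mathbb{CP}^2,*)\to SO(4)$ kills $p_1$ and $e$, and every positive-degree element of $\mathbb{Q}[p_1,e]$ lies in the ideal they generate). Your algebraic-independence argument via $\mathbb{P}(L_1\oplus L_2\oplus L_3)\to(\mathbb{CP}^N)^3$ with the section $[L_1]$ is correct and is a legitimate alternative to the paper's argument (which instead shows $R^*(\mathbb{CP}^2,*)$ is a free $R^*(\mathbb{CP}^2)$-module on $1,p_1,p_1^2$ by fibre integration against $e$, $p_1$, $p_1^2$).

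The genuine gap is the inclusion $R^*(\mathbb{CP}^2,*)\subseteq\mathbb{Q}[p_1,e]$, which you yourself flag as the main obstacle but do not close. Your plan --- run a ``pointed'' parametrised Bauer--Furuta argument over $BDiff(\mathbb{CP}^2,*)$ and then pin down coefficients by evaluating on one explicit family --- is not a proof: evaluating on a single family can determine the coefficients of a relation \emph{if} the relation is known to hold universally, but cannot establish that it does, and the pointed gauge-theoretic input is nowhere constructed. More to the point, no new gauge theory is needed. Since $BDiff(\mathbb{CP}^2,*)\simeq U_X$ is the total space of the universal family, Theorem \ref{thm:n=1case} applied to that family already gives the entire ring structure you need: $H^*(U_X;\mathbb{Q})\cong H^*(BDiff(\mathbb{CP}^2);\mathbb{Q})[x]/(x^3-Bx-C)$ with $e=3x^2-B$ and $p_1=3x^2+2B$. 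Hence $B=(p_1-e)/3$ and $x^2=(p_1+2e)/9$ lie in $\mathbb{Q}[p_1,e]$, so $\pi^*\kappa_{p_1^2}=21B=7(p_1-e)$ does too; and from $C=x(x^2-B)$ one gets $C^2=x^2(x^2-B)^2\in\mathbb{Q}[p_1,e]$, whence $\pi^*\kappa_{p_1^4}=81C^2+609B^3\in\mathbb{Q}[p_1,e]$. Your worry about the local system $H^2(\mathbb{CP}^2;\mathbb{Q})$ is also a non-issue: one passes to the double cover $\overline{BDiff}(\mathbb{CP}^2)=BDiff_0(\mathbb{CP}^2)$ where Theorem \ref{thm:n=1case} applies, and the classes $e$, $p_1$, $B$, $x^2$, $C^2$ are all invariant under the deck transformation $x\mapsto-x$, $C\mapsto-C$, so the relations descend. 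With this substitution your argument becomes complete; as written, the key step is asserted rather than proved.
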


The rings $R^*(\mathbb{CP}^2), R^*(\mathbb{CP}^2, *), R^*(\mathbb{CP}^2 , D^4)$ were investigated in \cite{rw}, but their structure was not fully determined. By computing these rings we have settled some open problems posed in \cite{rw}.

For each pair of non-negative integers $a,b$ we define a two variable polynomial $\phi_{a,b}(x,y) \in \mathbb{Z}[x,y]$ as follows. Let 
\[
p(z) = z^3-xz-y, \quad p'(z) = 3z^2-x.
\]
Then we define
\[
\phi_{a,b}(x,y) = \dfrac{1}{2\pi i}  \ointctrclockwise \dfrac{ (p'(z)+3x)^a  (p'(z))^b }{p(z)} dz
\]
where the contour encloses all zeros of $p(z)$. From this definition it follows that $\phi_{a,b}$ satisfies the following recursive formulas:
\begin{align*}
\phi_{a+1,b}(x,y) &= \phi_{a,b+1}(x,y) + 3 x \phi_{a,b}(x,y), \\
\phi_{a,b+3}(x,y) &= 3x \phi_{a,b+2}(x,y) + (27y^2-4x^3) \phi_{a,b}(x,y),
\end{align*}
which together with the initial conditions
\[
\phi_{0,0}(x,y) = 0, \quad \phi_{0,1}(x,y) = 3, \quad \phi_{0,2}(x,y) = 3x
\]
can be used to compute $\phi_{a,b}$ for all values of $a,b$. We will make use of the polynomials $\phi_{a,b}$ in the computation of tautological classes of families of definite $4$-manifolds. We first state the $n=1$ case.

\begin{theorem}\label{thm:n=1}
Let $E \to B$ be a smooth family with fibres diffeomorphic to $X$, where $X$ is a smooth, compact, simply-connected, positive definite $4$-manifold with $b_2(X)=1$. Suppose that the monodromy action of $\pi_1(B)$ on $H^2(X ; \mathbb{Z})$ is trivial. Then there exist classes $B \in H^4( B ; \mathbb{Q})$, $C \in H^6( B ; \mathbb{Q})$ such that:
\begin{itemize}
\item[(i)]{There is an isomorphism of $H^*(B ; \mathbb{Q})$-algebras
\[
H^*(E ; \mathbb{Q}) \cong H^*(B ; \mathbb{Q})[x]/( x^3 - Bx-C).
\]
}
\item[(ii)]{The Euler class and first Pontryagin class of $T(E/B)$ are given by
\[
e = 3x^2-B, \quad p_1 = 3x^2+2B.
\]
}
\item[(iii)]{For all $a,b \ge 0$, we have
\[
\kappa_{p_1^a e^b}(E) = \phi_{a,b}( B , C ).
\]
}
\end{itemize}
\end{theorem}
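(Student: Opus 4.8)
The plan is to build the cohomology of the total space $E$ using parametrised Seiberg–Witten / Bauer–Furuta theory, which is what gives the diagonalisation statement, and then extract the geometric characteristic classes and the tautological classes by purely algebraic manipulations in $H^*(E;\mathbb{Q})$.

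First I would establish part (i). Since the monodromy on $H^2(X;\mathbb{Z})\cong\mathbb{Z}$ is trivial, the local system $R^2\pi_*\mathbb{Q}$ is trivial and the Leray–Hirsch-type argument shows $H^*(E;\mathbb{Q})$ is a free $H^*(B;\mathbb{Q})$-module with a generator $x\in H^2(E;\mathbb{Q})$ restricting to a generator on each fibre. The key input is that the fibrewise cohomology of $X\simeq_{\mathrm{top}}\mathbb{CP}^2$ is $\mathbb{Q}[x]/(x^3)$ (with $x^2$ the fibrewise fundamental class), so that $x^3 = Bx + C$ for unique $B\in H^2(B;\mathbb{Q})$, $C\in H^3(B;\mathbb{Q})$ — here I would normalise $x$ so that the degree-$4$ coefficient of $x^3$ vanishes, i.e.\ absorb $\int_{E/B}(x^3)\cdot$(something) — and conclude the ring presentation $H^*(E;\mathbb{Q})\cong H^*(B;\mathbb{Q})[x]/(x^3-Bx-C)$.

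Next, part (ii). The vertical tangent bundle $T(E/B)$ restricts on each fibre to $TX$, with $p_1(TX)=3[x_0^2]$ (for $\mathbb{CP}^2$) and $e(TX)=3[x_0^2]$; hence $e = 3x^2 + (\text{lower filtration})$ and $p_1 = 3x^2 + (\text{lower filtration})$, where the lower-filtration terms are $H^2(B;\mathbb{Q})$-multiples of $x$ plus $H^4(B;\mathbb{Q})$-classes. The actual values $e = 3x^2 - B$ and $p_1 = 3x^2 + 2B$ must come from the Seiberg–Witten side: the families Bauer–Furuta invariant, applied to the (unique up to the obvious symmetry) spin$^c$ structure, forces a relation that pins down the correction term and matches $B$ with the second coefficient of the cubic. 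I expect this to be \emph{the main obstacle} — producing the precise constants $-B$ and $+2B$ rather than generic unknowns $\alpha x+\beta$ — and I would handle it by the same parametrised gauge-theoretic input that underlies the paper's diagonalisation theorem (the $n=1$ case is where the finite-dimensional approximation is cleanest), together with the signature/index constraints $\int_{E/B}p_1 = 3\sigma(X) = 3$ and $\int_{E/B}e = \chi(X) = 3$, which fix the relation between the distinguished class $x$ and $B,C$. One also uses $e + p_1 = 6x^2 + B$ and the Wu-type identity $e\equiv p_1 \pmod{}$ on a definite manifold is consistent with $(3x^2-B)$ and $(3x^2+2B)$ differing by $3B=3B$.

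Finally, part (iii) is a residue computation. Using (ii),
\[
\kappa_{p_1^a e^b}(E) = \int_{E/B} (3x^2+2B)^a (3x^2-B)^b.
\]
Working in $R := H^*(B;\mathbb{Q})[x]/(x^3-Bx-C)$, integration over the fibre is the $H^*(B;\mathbb{Q})$-linear map picking out the coefficient of $x^2$. I would identify $3x^2 = p'(x)+3B$ where $p(z)=z^3-Bz-C$, and then recognise fibre integration of a polynomial $f(x)$ as the contour integral $\frac{1}{2\pi i}\ointctrclockwise \frac{f(z)}{p(z)}\,dz$ around all roots of $p$: indeed for $f(z)=z^k$ this residue equals the coefficient of $x^2$ in $x^k \bmod p(x)$, by the standard partial-fractions/Newton-identity argument (the residue at infinity is $0$ for $\deg f \le 1$ and picks up the leading coefficient appropriately for higher degree, which is exactly the recursion $z^3 = Bz + C$). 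Substituting $f(z) = (p'(z)+3B)^a(p'(z))^b$ with $(x,y)=(B,C)$ gives precisely $\phi_{a,b}(B,C)$, and the stated recursions and initial values for $\phi_{a,b}$ drop out of $p'(z)^3 = 3x\,p'(z)^2 + (27y^2-4x^3)$ together with $\phi_{0,0}=0$, $\phi_{0,1}=3$, $\phi_{0,2}=3x$, which are themselves immediate residue evaluations. The only subtlety to check is the normalisation of $x$ chosen in (i), which must be the one making $\int_{E/B}x^2 = 1$; with that choice all three parts are consistent.
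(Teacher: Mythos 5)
Your overall architecture matches the paper's (establish the free-module/cubic presentation, identify $e$ and $p_1$ in terms of $x$ and $B$, then compute $\kappa_{p_1^ae^b}$ by a residue/recursion argument), and your part (iii) is correct: the identification of $\int_{E/B}f(x)$ with $\frac{1}{2\pi i}\ointctrclockwise f(z)p(z)^{-1}\,dz$ via Lagrange interpolation is a clean equivalent of the paper's verification that both sides satisfy the same recursions and initial conditions, and it makes the definition of $\phi_{a,b}$ transparent. Part (i) with your normalisation $\int_{E/B}x^3=0$ is also exactly what the paper does.

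The genuine gap is in part (ii), and you flag it yourself but do not close it. Saying the constants ``must come from the Seiberg--Witten side'' together with $\int_{E/B}p_1=3$, $\int_{E/B}e=3$ is not a proof: the signature and Gauss--Bonnet constraints only fix the coefficient of $x^2$, leaving $p_1=3x^2+\alpha x+\beta$ and $e=3x^2+\gamma x+\delta$ with four undetermined classes, and even adding the families signature theorem gives only one (nonlinear) relation such as $6\beta+\alpha^2=12B$, which does not force $\alpha=0$, $\beta=2B$. What is actually needed, and what the paper supplies, is: (a) the index-zero Bauer--Furuta constraint for the spin$^c$ structure with $c_1=\pm\xi$ (virtual index $d=(1-1)/8=0$, hence $\mathrm{Ch}(D)=0$, i.e.\ $\int_{E/B}e^{\pm x/2}\hat{A}(T(E/B))=0$), whose degree-$2$ component gives $\int_{E/B}(x^3-p_1x)=0$ and kills $\alpha$; (b) the index-one constraint for the spin$^c$ structure with $c_1=3\xi$ (here $d=(9-1)/8=1$, so $c_j(D)=0$ for $j>1$ and $\mathrm{Ch}(D)=e^{c_1(D)}$, giving $\int_{E/B}e^{3x/2}\hat{A}(T(E/B))=e^{u}$ with $u$ read off from the degree-$2$ part), whose degree-$4$ component combined with (a) yields $\beta=2B$; and (c) the purely topological trace formula $\int_{E/B}\alpha e=\mathrm{Tr}(\alpha)$, which computes $\int_{E/B}ex=0$ and $\int_{E/B}e\nu=G$ and hence pins down $e=2\nu+x^2=3x^2-B$ without any gauge theory at all (your proposal misattributes this part to Seiberg--Witten input). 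The sentence invoking ``$e+p_1=6x^2+B$'' and a Wu-type congruence is a consistency check on the answer, not a derivation. Without steps (a)--(c) the correction terms remain undetermined, so part (ii) as written is incomplete.
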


An interesting consequence of Theorem \ref{thm:n=1} is that the rational cohomology class $p_1$ depends only on the underlying topological structure of the family, because $p_1$ is completely determined by $x$ and $B$ and in turn these classes can be uniquely characterised in terms of the push-forward map $\pi_* : H^*(E ; \mathbb{Q}) \to H^{*-4}(B ; \mathbb{Q})$. In fact, a similar result is true more generally for families of definite $4$-manifolds. See Remark \ref{rem:topstruct}. It follows that the tautological classes $\kappa_{p_1^a e^b}(E)$ depend only on the underlying topological structure of the family. This could also be deduced from the fact that the tautological classes are also defined for topological bundles, see \cite[Theorem B]{erw}.

\begin{remark}\label{rem:cp2}
If $E = \mathbb{P}(V)$ is the $\mathbb{CP}^2$-bundle associated to a complex rank $3$ vector bundle $V \to B$ with trivial determinant, then $B = -c_2(V)$, $C = -c_3(V)$ and so $\kappa_{p_1^a e^b}(E) = \phi_{a,b}( -c_2(V) , -c_3(V) )$, which gives the tautological classes as polynomials in $c_2(V), c_3(V)$.
\end{remark}

To state our next result, we need a few definitions. Let $\Lambda_n$ denote a free abelian group of rank $n$ and let $e_1, \dots , e_n$ be a basis. Equip $\Lambda_n$ with the standard Euclidean inner product $\langle e_i , e_j \rangle = \delta_{i j}$. Let $W_n$ denote the isometry group of $\Lambda_n$. Then $W_n$ is isomorphic to a semidirect product
\[
W_n = S_n \ltimes \mathbb{Z}_2^n,
\]
where the symmetric group $S_n$ acts by permutation: $\sigma( e_i ) = e_{\sigma(i)}$ and the normal subgroup $\mathbb{Z}_2^n$ is generated by $\theta_1 , \dots , \theta_n$, where $\theta_i$ is the reflection in the hyperplane orthogonal to $e_i$. Let $X$ denote a smooth, compact, simply-connected $4$-manifold with positive definite intersection form and $n = b_2(X) \ge 1$. In Section \ref{sec:families} we construct a principal $W_n$-bundle
\[
p : \overline{BDiff}(X) \to BDiff(X)
\]
over $BDiff(X)$. Since $p$ is a finite covering, it follows that the pullback map $p^* : H^*( BDiff(X) ; \mathbb{Q}) \to H^*( \overline{BDiff}(X) ; \mathbb{Q})$ is injective and that the image is precisely the $W_n$-invariant part of $H^*(\overline{BDiff(X)} ; \mathbb{Q})$. In particular, we may think of the tautological ring $R^*(X)$ as a subring of $H^*( \overline{BDiff}(X) ; \mathbb{Q})$.

\begin{theorem}\label{thm:tautring}
Let $X$ be a smooth, compact, simply-connected, positive definite $4$-manifold with $b_2(X)=n \ge 2$. Then there exists classes
\[
D_{ij} \in H^2( \overline{BDiff}(X) ; \mathbb{Q}),  1 \le i,j  \le n, i \neq j
\]
with the following properties.
\begin{itemize}
\item[(i)]{Let $D^*(X) \subseteq H^*( \overline{BDiff}(X) ; \mathbb{Q})$ be the subring generated by $\{ D_{i,j} \}_{i \neq j}$. The group $W_n$ acts on the subring $D^*(X)$ according to
\[
\sigma( D_{ij} ) = D_{ \sigma(i) \sigma(j) }, \quad \sigma \in S_n
\]
and
\[
\theta_k ( D_{ij} ) = \begin{cases} D_{ij} & k \neq j, \\ -D_{ij} & k=j. \end{cases}
\]
}
\item[(ii)]{Let $I^*(X)$ denote the $W_n$-invariant subring of $D^*(X)$. Then $I^*(X) \subseteq H^*( BDiff(X) ; \mathbb{Q})$.}
\item[(iii)]{The tautological ring $R^*(X)$ of $X$ is a subring of $I^*(X)$. That is, all tautological classes of $X$ can be expressed as $W_n$-invariant polynomials of the $D_{ij}$.}
\end{itemize}
\end{theorem}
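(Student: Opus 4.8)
The plan is to build the classes $D_{ij}$ using a families version of Donaldson's diagonalisation theorem via the parametrised Bauer--Furuta invariant, then to express the tautological classes as fibre integrals of symmetric functions in the $D_{ij}$. First I would recall the principal $W_n$-bundle $p : \overline{BDiff}(X) \to BDiff(X)$ constructed in Section \ref{sec:families}: a point of $\overline{BDiff}(X)$ is a point of $BDiff(X)$ together with a choice of isometry $H^2(X;\mathbb{Z}) \cong \Lambda_n$, and the $W_n$-action permutes these choices freely and transitively on fibres. Pulling back the universal family along $p$ gives a family $\overline{E} \to \overline{BDiff}(X)$ whose $H^2$-local system is canonically trivialised as $\Lambda_n$; in particular the $n$ ``diagonal classes'' $e_1, \dots, e_n \in H^2(X;\mathbb{Z})$ extend to locally constant sections, and the intersection form is the standard diagonal form. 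The families Donaldson theorem (the author's main gauge-theoretic input, a families refinement of \cite{don}) should supply, for the universal definite family, a collection of cohomology classes $E_i \in H^2(\overline{E};\mathbb{Q})$ lifting the $e_i$ and pairwise orthogonal in the fibrewise intersection form, i.e.\ $\int_{\overline{E}/\overline{BDiff}(X)} E_i E_j = \delta_{ij}$, together with control on how $E_i^2$ and $E_i E_j$ integrate against classes pulled back from the base.

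Given such classes, I would \emph{define}
\[
D_{ij} = \int_{\overline{E}/\overline{BDiff}(X)} E_i E_j^2 \in H^2(\overline{BDiff}(X);\mathbb{Q}), \quad i \neq j,
\]
the fibrewise triple intersection of the diagonalising classes (this is the natural families analogue of the quantity that vanishes on a single $4$-manifold but need not vanish in families, exactly as the $n=1$ class $C$ in Theorem \ref{thm:n=1} measures the failure of $x^3 = Bx$). Part (i) is then essentially formal: $W_n$ acts on $\overline{BDiff}(X)$ covering the identity on $BDiff(X)$, and its action on the diagonalising classes is $\sigma(E_i) = E_{\sigma(i)}$ for $\sigma \in S_n$ and $\theta_k(E_i) = (-1)^{\delta_{ik}} E_i$ for the reflections (since $\theta_k$ is the reflection negating $e_k$); substituting into the definition of $D_{ij}$ and using that the fibre integral is $W_n$-equivariant gives $\sigma(D_{ij}) = D_{\sigma(i)\sigma(j)}$ and $\theta_k(D_{ij}) = D_{ij}$ unless $k = j$, in which case $E_j \mapsto -E_j$ flips the sign of the $E_j^2$ factor. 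Part (ii) is immediate from the general fact quoted in the excerpt: $p^*$ identifies $H^*(BDiff(X);\mathbb{Q})$ with the $W_n$-invariants of $H^*(\overline{BDiff}(X);\mathbb{Q})$, so any $W_n$-invariant polynomial in the $D_{ij}$ descends to $BDiff(X)$.

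The substance is part (iii): expressing $\kappa_{p_1^a e^b}(X)$ as a $W_n$-invariant polynomial in the $D_{ij}$. Here I would pull the family back to $\overline{BDiff}(X)$, where $\kappa_c$ is unchanged (fibre integration commutes with pullback), and use the families Donaldson theorem to get formulas for $e(T(\overline{E}/\overline{BDiff}(X)))$ and $p_1(T(\overline{E}/\overline{BDiff}(X)))$ in terms of the $E_i$ — in analogy with Theorem \ref{thm:n=1}(ii), where $e = 3x^2 - B$ and $p_1 = 3x^2 + 2B$, I expect $e = \sum_i E_i^2 \cdot(\text{correction})$ and $p_1$ a similar fibrewise-quadratic expression, with the precise coefficients coming from the signature and Euler characteristic of $X$. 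Then $\kappa_{p_1^a e^b}$ becomes the fibre integral of a polynomial in the $E_i$ and in classes pulled back from the base; expanding and integrating over the fibre, every monomial in the $E_i$ of fibre-degree $4$ integrates to a polynomial in the $D_{ij}$ and in $\delta_{ij}$-type constants — the key points being that $\int E_i^2 = 1$, $\int E_i E_j = 0$, and the only new classes produced are the triple intersections $\int E_i E_j^2 = D_{ij}$ (higher intersections $\int E_i^2 E_j^2$ etc.\ reduce to products of $D$'s and constants by fibrewise Poincaré duality and the structure of $H^*(\overline{E})$ as a module over $H^*(\overline{BDiff}(X))$). Symmetry of $\kappa_c$ under $W_n$ is automatic because $\kappa_c$ is pulled back from $BDiff(X)$, so the resulting polynomial lies in $I^*(X)$. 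The main obstacle I anticipate is establishing the required families diagonalisation with enough structural information — not merely that orthogonal lifts $E_i$ exist, but that the cohomology ring $H^*(\overline{E};\mathbb{Q})$ is freely generated over the base by $1, E_i, $ and fibrewise-degree-$4$ monomials subject only to relations determined by the $D_{ij}$ — so that the fibre integrals can be computed in closed form; this is precisely where the parametrised Bauer--Furuta machinery of the paper does the heavy lifting, and I would lean on the $n=1$ model case of Theorem \ref{thm:n=1} as the template for the general bookkeeping.
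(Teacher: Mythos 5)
Your outline matches the paper's skeleton — the $D_{ij}$ are indeed defined as fibrewise triple intersections of lifts $x_i$ of the framing classes (the paper takes $D_{ij}=\int_{E/B}x_i^2x_j$), parts (i) and (ii) are the formal equivariance and covering-space arguments you give, and part (iii) proceeds by writing $e$ and $p_1$ as polynomials in the $x_i$ and integrating over the fibre. However, there is a genuine gap in your part (iii), and it sits exactly where you wave at ``fibrewise Poincar\'e duality and the structure of $H^*(\overline{E})$ as a module.'' A priori the cup product of $H^*(E;\mathbb{Q})$ over the base involves many more structure constants than the $D_{ij}$: the triple integrals $\int x_ix_jx_k$ for distinct $i,j,k$, the coefficients $D_{ij}^k$ for $k\neq i,j$, the degree-$4$ classes $G_i$ (equivalently $B_i=\int x_i^4$) and the degree-$6$ classes $J_i$ (equivalently $C_i$, with $x_i^3=B_ix_i+C_i$). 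The paper needs \emph{two} separate gauge-theoretic inputs to kill or control these: Theorem \ref{thm:sw} (vanishing of $\int_{E/B}e^{(\sum\epsilon_ix_i)/2}\hat{A}$, from index-zero spin$^c$ structures) forces $\int x_ix_jx_k=0$, $D_{ij}^k=0$ for $k\neq i,j$, and the key factorisation $(x_i-D_{ji})(x_j-D_{ij})=0$; and Theorem \ref{thm:sw2} (the index-one computation with $c=3x_i+\sum_{j\neq i}\epsilon_jx_j$) forces $2B_i=3\sum_{j\neq i}D_{ij}^2+\mu$, which combined with $D_{ji}^3=B_iD_{ji}+C_i$ is what finally expresses $B_i$ and $C_i$ as polynomials in the $D_{ij}$ alone (Proposition \ref{prop:bici}). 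Without the second input, $B_i$ and $C_i$ are genuinely new generators and the conclusion that $R^*(X)\subseteq D^*(X)$ fails; this reduction is not a formal consequence of duality or of the existence of orthogonal lifts, and no ``families Donaldson theorem'' giving it is proved (the paper explicitly treats that picture as heuristic only, because of transversality in families).

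Two smaller points. Your definition $D_{ij}=\int E_iE_j^2$ has the indices transposed relative to the equivariance you want: under $\theta_k$ the class $E_j^2$ is invariant (squaring kills the sign), so $\int E_iE_j^2$ changes sign when $k=i$, not $k=j$; your sentence claiming that $E_j\mapsto-E_j$ ``flips the sign of the $E_j^2$ factor'' is incorrect, and you should define $D_{ij}=\int E_i^2E_j$ to match the stated action. Second, even granting the diagonal product structure, the reduction of an arbitrary fibre integral $\int f(x_1,\dots,x_n)$ to single-variable data requires the integration formula of Proposition \ref{prop:intf} (substituting $x_k\mapsto D_{jk}$), which itself uses the associativity constraint $(D_{ij}-D_{kj})(D_{ik}-D_{jk})=0$; this bookkeeping is more delicate than the $n=1$ template suggests.
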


Theorem \ref{thm:tautring} says that the tautological classes can be written as $W_n$-invariant polynomials of the $D_{ij}$. The next theorem addresses the question of how to compute these invariant polynomials. First we set
\[
I_1 = \sum_{i,j | i \neq j} D_{ij}^2, \quad I_2 =  \sum_{i,j,k | i,j,k \text{ distinct}} D_{ik} D_{jk}.
\]
Then for $i = 1, \dots , n$, define
\[
B_i = \dfrac{3}{2} \sum_{j | j \neq i} D_{ij}^2 - \dfrac{(n-5)}{2n(n-1)} I_1 - \dfrac{1}{(n-1)} I_2,
\]
\[
C_i = \dfrac{1}{(n-1)} \sum_{j | j \neq i} \left( D_{ji}^3 - B_i D_{ji}\right).
\]

\begin{theorem}\label{thm:tautclasses}
Let $X$ be a smooth, compact, simply-connected, positive definite $4$-manifold with $b_2(X)=n \ge 2$. Then
\begin{itemize}
\item[(1)]{The tautological classes $\kappa_c$ with $c = p_1^a e^{2b}$ are given by 
\[
\kappa_{p_1^a e^{2b}} = \sum_{i=1}^n \phi_{a,2b}( B_i , C_i )
\]
}
\item[(2)]{The tautological classes $\kappa_c$ with $c = p_1^a e^{2b+1}$ are given by 
\[
\kappa_{p_1^a e^{2b+1}} = \sum_{i=1}^n \phi_{a,2b+1}(B_i,C_i) - 2 \sum_{j | j \neq i} (3D_{ij}^2+2B_j)^a( 3D_{ij}^2-B_j)^{2b}.
\]
}
\end{itemize}
\end{theorem}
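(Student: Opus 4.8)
The plan is to reduce Theorem~\ref{thm:tautclasses} to the $n=1$ case (Theorem~\ref{thm:n=1}) by a splitting principle. The idea is that the principal $W_n$-bundle $\overline{BDiff}(X) \to BDiff(X)$ should carry $n$ natural ``line classes''; morally, after passing to $\overline{BDiff}(X)$ the vertical cohomology of the universal family splits, and each summand behaves like the $H^2$ of a single $\mathbb{CP}^2$-factor. Concretely, I expect the classes $D_{ij}$ of Theorem~\ref{thm:tautring} to fit together so that the fibrewise cohomology $H^*(E/B)$ over $\overline{BDiff}(X)$ decomposes and the Euler and Pontryagin classes of the vertical tangent bundle are expressed through $n$ ``local'' pairs $(B_i, C_i)$, each pair playing the role of the $(B,C)$ appearing in Theorem~\ref{thm:n=1}(ii). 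The precise normalisations in the definitions of $B_i$ and $C_i$ (the coefficients $\tfrac{3}{2}$, $-\tfrac{(n-5)}{2n(n-1)}$, $\tfrac{1}{n-1}$, etc.) are exactly what is forced by writing the genuine characteristic classes of $T(E/B)$ in terms of the $D_{ij}$; I would derive them in Section~\ref{sec:families} and simply invoke the resulting formulas here.

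First I would recall from the earlier sections the structure theorem for the vertical cohomology of the universal family pulled back to $\overline{BDiff}(X)$, expressing $e(T(E/B))$ and $p_1(T(E/B))$ in terms of the $D_{ij}$ and hence in terms of the $B_i$ and $C_i$. Second, using that structure, I would set up the pushforward $\int_{E/B}$ as a sum of $n$ ``residue-type'' contributions, one for each index $i$, where the $i$-th contribution is governed by a cubic $p_i(z) = z^3 - B_i z - C_i$ in precise analogy with the cubic $p(z) = z^3 - xz - y$ defining $\phi_{a,b}$. Third, for $c = p_1^a e^{2b}$, I would observe that after substituting $e = 3x^2 - B_i$, $p_1 = 3x^2 + 2B_i$ into the integrand and applying the residue formula defining $\phi_{a,b}$, the $i$-th term is exactly $\phi_{a,2b}(B_i, C_i)$, giving part~(1). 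Fourth, for the odd case $c = p_1^a e^{2b+1}$, the extra factor of $e$ is not a $W_n$-invariant on its own, and the sign rule $\theta_k(D_{ij}) = -D_{ij}$ for $k=j$ (Theorem~\ref{thm:tautring}(i)) produces a correction: I would track how the odd power of the Euler class forces the pushforward to pick up the boundary/cross terms $\sum_{j \neq i}(3D_{ij}^2 + 2B_j)^a(3D_{ij}^2 - B_j)^{2b}$, with the factor $-2$ coming from the two square roots (equivalently, the $\mathbb{Z}_2$ ambiguity) involved in extracting a single power of $e$ rather than $e^2$. This yields part~(2).

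I expect the main obstacle to be the odd-Euler-class case and, more precisely, pinning down the correction term with its exact coefficient $-2$ and the arguments $3D_{ij}^2 \pm B_j$. The underlying subtlety is that $e(T(E/B))$ itself is not in the image of $H^*(BDiff(X);\mathbb{Q})$ (only even combinations and $W_n$-invariant expressions are), so computing $\kappa_{p_1^a e^{2b+1}}$ requires working equivariantly on $\overline{BDiff}(X)$ and carefully symmetrising; the cross terms arise from the loci where two of the ``local'' cubics interact. I would handle this by computing $\int_{E/B}(p_1^a e^{2b+1})$ first on $\overline{BDiff}(X)$ as an honest polynomial in the $D_{ij}$, then checking by hand that the answer is $W_n$-invariant (it must be, since the left side descends) and matches the stated formula; the verification that the $\phi$-recursions are compatible with this symmetrisation is the one genuinely delicate bookkeeping step. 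The even case, by contrast, I expect to follow almost formally from Theorem~\ref{thm:n=1}(iii) applied fibrewise-in-$i$.
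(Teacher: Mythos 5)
Your overall shape for part (1) --- decompose $\int_{E/B}$ into $n$ contributions, each governed by a cubic $z^3 - B_i z - C_i$, and match against the contour-integral definition of $\phi_{a,b}$ --- is indeed the paper's strategy, but the decomposition itself is the technical heart and you leave it unproved. The paper's Integration Formula (Proposition \ref{prop:intf}) states $\int_{E/B} f(x_1,\dots,x_n) = \sum_j \int_{E/B} f_j(x_j)$, where $f_j$ substitutes $D_{jk}$ for $x_k$ ($k \neq j$); its proof is an induction resting on the identity $(x_i - D_{ji})(x_j - D_{ij}) = 0$ (a consequence of the Seiberg--Witten constraint $E_{ij} = -D_{ij}D_{ji}$) together with the associativity relation $(D_{ij}-D_{kj})(D_{ik}-D_{jk}) = 0$. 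There is no honest ``splitting principle'' reduction to Theorem \ref{thm:n=1}: no actual $\mathbb{CP}^2$-families with invariants $(B_i,C_i)$ are known to exist (the paper remarks that the natural candidates are only virtual bundles), so the reduction must be carried out algebraically. You also need the identities $2B_i = 3\sum_{j\neq i} D_{ij}^2 + \mu$ (Proposition \ref{prop:bk}) and $\lambda = -\sum_{j\neq i}(3D_{ij}^2-B_j)^2$ (Proposition \ref{prop:e2}) to see that the diagonal substitution turns $p_1 = 3\sum_j x_j^2 + \mu$ into exactly $3x_i^2+2B_i$ and $e^2 = \sum_j (3x_j^2-B_j)^2 + \lambda$ into exactly $(3x_i^2-B_i)^2$; these are nontrivial consequences of Theorems \ref{thm:sw} and \ref{thm:sw2} and cannot be absorbed into a choice of ``normalisation'' of $B_i, C_i$.

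The more serious gap is part (2). Your proposed mechanism for the correction term --- a $\mathbb{Z}_2$ ambiguity in extracting a square root of $e^2$, or a failure of $W_n$-invariance of $e$ requiring symmetrisation --- is not what happens and would not produce the coefficient $-2$ or the arguments $3D_{ij}^2 \pm B_j$. (The class $e$ is pulled back from the uncovered universal family, so it is already $W_n$-invariant; nothing needs symmetrising.) The actual source of the correction is that $e = 2\nu + \sum_j x_j^2$ involves the class $\nu$, which is not diagonal in the $x_j$; eliminating it via $\nu = x_i^2 - \sum_{j\neq i} D_{ij}x_j - G_i$ for a \emph{fixed} $i$ introduces terms linear in the $x_j$, and feeding these through the Integration Formula together with the computation $\int_{E/B} x_j^m\bigl(x_j^2 + D_{ij}x_j + D_{ij}^2 - B_j\bigr) = D_{ij}^m$ yields precisely the correction $-2\sum_{j\neq i} f_j(D_{ij})$ of Proposition \ref{prop:intf2}. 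Note in particular that the resulting formula holds for each choice of $i$ separately --- a feature invisible from, and in tension with, your symmetrisation picture. The concluding recursion check ($\kappa_{a,b,i}$ versus $\phi_{a,b}(B_i,C_i)$ via $x_i^3 = B_ix_i + C_i$ and $D_{ij}^3 = B_jD_{ij} + C_j$) is as routine as you expect, but only once the two integration formulas are in place.
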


Theorem \ref{thm:tautclasses} gives a completely explicit expression for the tautological classes $\kappa_c$ as polynomials in $\{ D_{ij} \}_{i \neq j}$, once the polynomials $\phi_{a,b}(x,y)$ are known. As an application of Theorem \ref{thm:tautclasses}, we prove the existence of many linear relations amongst tautological classes.

\begin{theorem}\label{thm:linrel}
Let $X$ be a smooth, compact simply-connected definite $4$-manifold and let $d \ge 1$ be given. Then amongst all tautological classes $\kappa_{p_1^a e^b}$ with $a+b=d$ and $b$ even, there are at least
\[
\lfloor d/2 \rfloor - \lfloor (d-1)/3 \rfloor
\]
linear relations. More precisely, if $c_0 , c_1 , \dots , c_{\lfloor d/2 \rfloor} \in \mathbb{Q}$ are such that
\begin{equation}\label{equ:lincomb}
\sum_{j=0}^{ \lfloor d/2 \rfloor } c_j \phi_{d-2j , 2j}(x,y) = 0,
\end{equation}
then we also have
\[
\sum_{j=0}^{\lfloor d/2 \rfloor} c_j \kappa_{p_1^{d-2j} e^{2j} } = 0
\]
and the space of $(c_0 , c_1 , \dots , c_{\lfloor d/2 \rfloor } )$ satisfying Equation (\ref{equ:lincomb}) has dimension at least
\[
\lfloor d/2 \rfloor - \lfloor (d-1)/3 \rfloor.
\]
\end{theorem}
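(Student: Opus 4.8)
The plan is to deduce Theorem \ref{thm:linrel} directly from Theorem \ref{thm:tautclasses}(1) by a dimension count on polynomial identities satisfied by the $\phi_{a,b}$. The key observation is that part (1) of Theorem \ref{thm:tautclasses} expresses each $\kappa_{p_1^a e^{2b}}$ as $\sum_{i=1}^n \phi_{a,2b}(B_i, C_i)$, a \emph{linear} combination (over the $\kappa$'s) that factors through the single two-variable polynomial $\phi_{a,2b}(x,y)$. Hence if a rational linear combination $\sum_j c_j \phi_{d-2j,2j}(x,y)$ vanishes \emph{identically} as a polynomial in $x,y$, then substituting $x = B_i$, $y = C_i$ and summing over $i$ immediately gives $\sum_j c_j \kappa_{p_1^{d-2j} e^{2j}} = 0$. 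So the entire theorem reduces to the purely algebraic claim that the space of $(c_0,\dots,c_{\lfloor d/2\rfloor})$ with $\sum_j c_j \phi_{d-2j,2j}(x,y) = 0$ has dimension at least $\lfloor d/2\rfloor - \lfloor (d-1)/3\rfloor$.

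To prove that algebraic claim, I would work with the contour-integral definition $\phi_{a,b}(x,y) = \frac{1}{2\pi i}\ointctrclockwise \frac{(p'(z)+3x)^a (p'(z))^b}{p(z)}\,dz$, where $p(z) = z^3 - xz - y$. Note $p'(z) + 3x = 3z^2 + 2x$ and $p'(z) = 3z^2 - x$, so $\phi_{a,b}$ is, up to the residue operation, the symmetric function $\sum_{p(\zeta)=0} (3\zeta^2+2x)^a(3\zeta^2-x)^b / p'(\zeta)$ — i.e. a generating function in the roots of $p$. The point is that $\phi_{d-2j,2j}(x,y)$, for $j = 0, 1, \dots, \lfloor d/2\rfloor$, are $\lfloor d/2 \rfloor + 1$ polynomials in $\mathbb{Q}[x,y]$, and I must show they span a space of dimension \emph{at most} $\lfloor d/2 \rfloor + 1 - (\lfloor d/2\rfloor - \lfloor(d-1)/3\rfloor) = \lfloor (d-1)/3\rfloor + 1$. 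Equivalently, all these polynomials lie in a subspace of $\mathbb{Q}[x,y]$ of dimension $\lfloor(d-1)/3\rfloor + 1$. The natural candidate subspace is spanned by $\{ x^{d-3k} y^{2k} \}_{k \ge 0, \, d - 3k \ge 0}$ together with possibly a few other monomials: assigning $\deg x = 2$, $\deg y = 3$ makes every $\phi_{a,b}$ with $a + b = d$ homogeneous of weight $2a + 2b + \deg(\text{residue contribution})$; tracking the residue (which contributes weight $-3$, the negative of $\deg p$) shows $\phi_{d-2j,2j}(x,y)$ is weighted-homogeneous of weight $2d - 3 + \text{something}$, and the monomials $x^\alpha y^\beta$ of a fixed weight $w = 2\alpha + 3\beta$ form a space whose dimension is exactly of the claimed order $\lfloor \cdot /3 \rfloor + 1$. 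I would verify the weight bookkeeping from the contour integral (or inductively from the stated recursions $\phi_{a,b+3} = 3x\phi_{a,b+2} + (27y^2 - 4x^3)\phi_{a,b}$, which visibly preserves the $(\deg x, \deg y) = (2,3)$ grading and shifts weight by $6$), pin down the exact weight $w(d)$ of $\phi_{d-2j,2j}$, and count monomials $x^\alpha y^\beta$ with $2\alpha + 3\beta = w(d)$; this count equals $\lfloor(d-1)/3\rfloor + 1$ after elementary casework on $d \bmod 3$ (and using that $b = 2j$ is even, so the relevant $\phi$'s may only involve $y$ to even powers, or some similar parity constraint that must be checked against the recursion). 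Since we have $\lfloor d/2\rfloor + 1$ vectors in a space of that smaller dimension, the kernel of the evaluation map has dimension at least the difference, giving the stated bound.

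The main obstacle I anticipate is the precise weight/parity accounting: getting the exact weighted degree of $\phi_{d-2j,2j}$ right (the contour integral lowers degree in a way that depends on how $1/p(z)$ is expanded, so one must be careful that $\phi_{a,b}$ is a genuine polynomial of the expected weight and not, say, vanishing identically or of lower weight for small $a,b$), and confirming whether restricting to $b$ even genuinely cuts down the spanning set (it may be that $\phi_{a,b}$ with $b$ even automatically lies in the span of $x$-and-$y^{2}$ monomials, using that $27y^2 - 4x^3$ appears in the recursion — this is the discriminant of $p$, which is the natural "even in $y$" building block). I would make this rigorous by an explicit induction: show $\phi_{a,2b}(x,y) \in \mathbb{Q}[x, 27y^2 - 4x^3] \cdot \{1, x, \dots\}$ lies in a small explicit list of weighted monomials, using the two recursion relations and the initial data $\phi_{0,0} = 0$, $\phi_{0,1} = 3$, $\phi_{0,2} = 3x$. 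Once the spanning set for $\{\phi_{d-2j,2j}\}_j$ is shown to have size $\le \lfloor(d-1)/3\rfloor + 1$, the linear-algebra conclusion and the translation back to the $\kappa$'s via Theorem \ref{thm:tautclasses}(1) are immediate, and the final count $\lfloor d/2\rfloor - \lfloor(d-1)/3\rfloor$ of independent relations follows.
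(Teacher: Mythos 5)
Your proposal is correct and follows essentially the same route as the paper: reduce to a polynomial identity via Theorem \ref{thm:tautclasses}(1), then count dimensions using that each $\phi_{a,b}$ is weighted-homogeneous of degree $2(a+b-1)$ for $\deg x = 2$, $\deg y = 3$ and involves only even powers of $y$ (both verified by induction from the recursions and initial conditions), so the $1+\lfloor d/2\rfloor$ polynomials $\phi_{d-2j,2j}$ lie in a space of dimension $1+\lfloor(d-1)/3\rfloor$. The weight and parity bookkeeping you flag as a concern does work out exactly as you anticipate, so there is no gap.
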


As explained in Section \ref{sec:linrel}, for each $d \ge 2$, the families signature theorem gives one linear relation amongst the tautological classes $\kappa_{p_1^a e^b}$ with $a+b=d$ and $b$ even. Theorem \ref{thm:linrel} implies that there are further linear relations whenever $ \lfloor d/2 \rfloor - \lfloor (d-1)/3 \rfloor  > 1$. This is the case if $d=6$ or $d \ge 8$. The first few such relations (up to $d = 12$) are:
\begin{align*}
0 &= 4\kappa_{p_1^4 e^2} - 41 \kappa_{p_1^2 e^4} + 100 \kappa_{e^6}, \\
0 &= 36 \kappa_{p_1^6 e^2} - 461 \kappa_{p_1^4 e^4} + 1843 \kappa_{p_1^2 e^6} - 2300 \kappa_{e^8}, \\
0 &= 24 \kappa_{p_1^7 e^2} - 322 \kappa_{p_1^5 e^4} + 1379 \kappa_{p_1^3 e^6} - 1900 \kappa_{p_1 e^8}, \\
0 &= 108 \kappa_{p_1^8 e^2}-1579 \kappa_{p_1^6 e^4} + 7902 \kappa_{p_1^4 e^6} - 15531 \kappa_{p_1^2 e^8} + 9100 \kappa_{e^{10}}, \\
0 &= 360 \kappa_{p_1^9 e^2}-5606 \kappa_{p_1^7 e^4}+30923 \kappa_{p_1^5 e^6} -71311 \kappa_{p_1^3 e^8} + 57100 \kappa_{p_1 e^{10}}, \\
0 &= 144 \kappa_{p_1^8 e^4}-2552 \kappa_{p_1^6 e^6}+16629 \kappa_{p_1^4 e^8} - 47400 \kappa_{p_1^2 e^{10}} + 50000 \kappa_{e^{12}}, \\
0 &= 6000 \kappa_{p_1^{10} e^2} -98012 \kappa_{p_1^8 e^4} + 577796 \kappa_{p_1^6 e^6} -1461667 \kappa_{p_1^4 e^8} + 1338700 \kappa_{p_1^2 e^{10}}.
\end{align*}

\subsection{Idea behind main results}

The inspiration for our main results comes from considering Donaldson theory for a family of definite $4$-manifolds. Let $X$ be a compact, simply-connected smooth $4$-manifold with positive definite intersection form. Recall the proof of Donaldson's diagonalisation theorem uses the moduli space $\mathcal{M}$ of self-dual instantons on an $SU(2)$-bundle $E \to X$ with $c_2(E)=-1$. Then $\mathcal{M}$ is a $5$-dimensional oriented manifold with singularities. The singularities correspond to reducible instantons, which correspond to elements in $\xi \in H^2( X ; \mathbb{Z})$ satisfying $\xi^2=1$, considered modulo $\xi \mapsto -\xi$. Each singularity of $\mathcal{M}$ takes the form of a cone over $\mathbb{CP}^2$. The moduli space $\mathcal{M}$ is non-compact, but it admits a compactification $\overline{\mathcal{M}}$ whose boundary is diffeomorphic to $X$. Removing from $\overline{\mathcal{M}}$ a neighbourhood of each singularity, we obtain a cobordism $\mathcal{M}'$ from $X$ to a disjoint union of copies of $\mathbb{CP}^2$. Cobordism invariance of the signature implies that there are $n = b_2(X)$ copies of $\mathbb{CP}^2$ and hence, there are $n$ distinct pairs of elements $\pm \xi_1 , \dots , \pm \xi_n \in H^2( X ; \mathbb{Z})$ satisfying $\xi_i^2 = 1$. This implies that $H^2(X ; \mathbb{Z})$ is diagonalisable.

Now suppose that $E \to B$ is a smooth family with fibres diffeomorphic to $X$ and suppose for simplicity that the monodromy action of $\pi_1(B)$ on $H^2(X ; \mathbb{Z})$ is trivial. Considering the moduli space of self-dual instantons with $c_2(E) = -1$ on each fibre of $E$, we obtain a families moduli space $\mathcal{M}_E \to B$. Note that $\mathcal{M}_E$ is typically not a fibre bundle since the topology of the fibres of $\mathcal{M}_E$ can vary as we move in $B$. We would expect that for a sufficiently generic family of metrics on $E$, we can arrange that $\mathcal{M}_E$ is smooth away from reducible solutions and that the structure of $\mathcal{M}_E$ around the reducibles is given by taking fibrewise cones on $n$ $\mathbb{CP}^2$ bundles $E_1, \dots , E_n$ over $B$. We would further expect that $\mathcal{M}_E$ can be compactified by adding a boundary which is diffeomorphic to the family $E$. Removing a neigbourhood of the reducible solutions, we would expect to obtain a cobordism $\pi' : \mathcal{M}'_E \to B$ relative $B$, between $E \to B$ and the disjoint union of $\mathbb{CP}^2$-bundles $E_1, \dots , E_n$. Consider the virtual vector bundle $V = T\mathcal{M}'_E - (\pi')^*(TB)$. Clearly $V|_{E} = T(E/B)$ and $V|_{E_i} = T(E_i/B)$ for each $i$, hence the Pontryagin classes of $V$ restrict to the Pontryagin classes of $E$ and $E_i$ on the boundary. Applying Stokes' theorem, we expect to obtain a kind of ``diagonalisation theorem" for the tautological classes:
\begin{equation}\label{equ:diagonal}
\kappa_{p_1^a e^{2b} }(E) = \sum_{i=1}^n \kappa_{p_1^a e^{2b}}(E_i).
\end{equation}
Note that we need to take even powers of $e$ because $e$ is unstable whereas $e^2 = p_2$ is stable.

There are some technical challenges for carrying out this argument rigorously. Most notably, it seems difficult to arrange unobstructedness of the families moduli space around the reducible solutions. It is well known that this can be done for a single moduli space by choosing a sufficiently generic metric, but extending this to families appears challenging. Nevertheless, the intuition provided by Donaldson theory turns out to be essentially correct. Theorem \ref{thm:tautclasses} provides a rigorous version of Equation (\ref{equ:diagonal}), where $\phi_{a,b}(B_i , C_i)$ plays the role of $\kappa_{p_1^a , e^b}(E_i)$. Note by Remark \ref{rem:cp2}, that if $-B_i$, $-C_i$ are the Chern classes of a rank $3$ vector bundle $V_i \to B$ with trivial determinant, then $\phi_{a,b}(B_i , C_i) = \kappa_{p_1^a e^b}(E_i)$, where $E_i$ is the $\mathbb{CP}^2$-bundle $E_i= \mathbb{P}(V_i)$. The natural candidate for $V_i$ is the families index of the instanton deformation complex around the corresponding reducible, except that we only know this exists as a {\em virtual} vector bundle.

We will prove the main results using families Seiberg--Witten theory, or more precisely the Bauer--Furuta cohomotopy refinement of Seiberg--Witten theory. The main advantage of this approach is that it allows us to avoid various transversality issues that typically arise in the construction of moduli spaces. It is quite surprising that Seiberg--Witten theory works here. The issue is that the families Seiberg--Witten moduli space is compact and there is no obvious relation between the families moduli space and the family $E$. What happens instead is that Seiberg--Witten theory gives constraints on the topology of the families index associated to families of Dirac operators on $E$. We use this to indirectly obtain a series of constraints on the cohomology ring $H^*(E ; \mathbb{Q})$ of the family $E$ and in turn this gives constraints on the tautological classes.

\subsection{Outline of paper}
In \textsection \ref{sec:families} we establish some basic results concerning families of definite $4$-manifolds. In \textsection \ref{sec:fambf} we consider the Bauer--Furuta refinement of Seiberg--Witten theory for a family of definite $4$-manifolds. The main result is Theorem \ref{thm:fsw1}. The rest of the section is concerned with understanding some of the implications of this theorem. In \textsection \ref{sec:cohomring}, we study in great detail the structure of the cohomology rings $H^*(E ; \mathbb{Q})$ of families of definite $4$-manifolds and in \textsection \ref{sec:tautclasses} we prove our main results concerning the tautological classes of such families. \textsection \ref{sec:n=1} and \textsection \ref{sec:n=2} are concerned with the special cases of $\mathbb{CP}^2$ and $\mathbb{CP}^2 \# \mathbb{CP}^2$ and finally in \textsection \ref{sec:linrel} we study linear relations in the tautological rings of definite $4$-manifolds.\\

\noindent{\bf Acknowledgments.} We thank Oscar Randal-Williams for helpful comments on the paper and for suggesting a simpler proof of Lemma \ref{lem:splitting}. The author was financially supported by the Australian Research Council Discovery Project DP170101054.

\section*{List of Commonly Used Notation}

\begin{abbrv}
\item[$X$] Smooth, compact, simply-connected definite $4$-manifold (\textsection \ref{sec:families})
\item[$n$] $n = b_2(X)$, the second Betti number of $X$ (\textsection \ref{sec:families})
\item[$\Lambda_n$] The lattice $\mathbb{Z}^n$ with the Euclidean inner product (\textsection \ref{sec:families})
\item[$e_1, \dots , e_n$] The standard basis of $\Lambda_n$ (\textsection \ref{sec:families})
\item[$W_n$] The isometry group of $\Lambda_n$, $W_n \cong S_n \ltimes \mathbb{Z}_2^n$ (\textsection \ref{sec:families})
\item[$\xi_1, \dots , \xi_n$] A framing of $H^2(X ; \mathbb{Z})$ (\textsection \ref{sec:families})
\item[$\pi :E \to B$] Smooth family with fibres diffeomorphic to $X$ (\textsection \ref{sec:families})
\item[$Diff(X)$] Diffeomorphism group of $X$ (\textsection \ref{sec:families})
\item[$Diff_0(X)$] Diffeomorphisms of $X$ acting trivially on $H^2(X ; \mathbb{Z})$ (\textsection \ref{sec:families})
\item[$K(X)$] The image of $Diff(X)$ in $Aut( H^2(X ; \mathbb{Z}))$ (\textsection \ref{sec:families})
\item[$BDiff(X), BDiff_0(X)$] The classifying spaces of $Diff(X), Diff_0(X)$ (\textsection \ref{sec:families})
\item[$\overline{BDiff}(X)$] $\overline{BDiff}(X) = BDiff_0(X) \times_{K(X)} W_n$ (\textsection \ref{sec:families})
\item[$\kappa_{p_1^a e^b} \in H^*( BDiff(X) ; \mathbb{Z})$] Tautological classes of $X$ (\textsection \ref{sec:intro}) 
\item[$R^*(X)$] Tautological ring of $X$ (\textsection \ref{sec:intro})
\item[$\kappa_{p_1^a e^b}(E) \in H^*(B ; \mathbb{Q})$] Tautological classes of $X$ evaluated on $E$ (\textsection \ref{sec:intro})
\item[$p_1, e \in H^4(E ; \mathbb{Q})$] First Pointryagin class and Euler class of the vertical tangent bundle of $E \to B$ (\textsection \ref{sec:families})
\item[$x_i \in H^2(E ; \mathbb{Q})$] Degree $2$ classes determined by Proposition \ref{prop:classes} (\textsection \ref{sec:families})
\item[$\nu \in H^4(E ; \mathbb{Q})$] Degree $4$ class determined by Proposition \ref{prop:classes} (\textsection \ref{sec:families})
\item[$D_{ij}^k \in H^2(B ; \mathbb{Q}), \linebreak E_{ij}, G_i \in H^4(B ; \mathbb{Q}), \linebreak J_i \in H^6(B ; \mathbb{Q}), \linebreak \omega \in H^8(B ; \mathbb{Q})$] Cohomology classes in $H^*(B ; \mathbb{Q})$ appearing as structure constants for the cup product on $H^*(E ; \mathbb{Q})$, see Equations (\ref{equ:21}), (\ref{equ:22}), (\ref{equ:23}), (\ref{equ:24}) (\textsection \ref{sec:families})
\item[ ] 
\item[$D_{ij}$] $D_{ij} = D_{ij}^i$, see Proposition \ref{prop:ring} (\textsection \ref{sec:cohomring})
\item[$B_i \in H^4(B ; \mathbb{Q})$] Class defined in Proposition \ref{prop:xcubed} (\textsection \ref{sec:cohomring})
\item[$C_i \in H^6(B ; \mathbb{Q})$] Class defined in Proposition \ref{prop:xcubed} (\textsection \ref{sec:cohomring})
\item[$\mu \in H^4(B ; \mathbb{Q})$] $p_1 = 3(x_1^2 + \cdots + x_n^2) + \mu$, Proposition \ref{prop:p1} (\textsection \ref{sec:cohomring})

\end{abbrv}

\section{Families of definite $4$-manifolds}\label{sec:families}

Throughout the paper $X$, denotes a smooth, compact, simply-connected $4$-manifold with positive definite intersection form and $n = b_2(X) \ge 1$. The intersection pairing $\langle \; , \; \rangle$ on $H^2(X)$ is a symmetric, unimodular bilinear form. By Donaldson's diagonalisation theorem \cite{don}, the intersection form on $H^2(X ; \mathbb{Z})$ is diagonal and so there exists an orthonormal basis $\xi_1, \dots , \xi_n$ for $H^2(X ; \mathbb{Z})$. An orthonormal basis for $H^2(X ; \mathbb{Z})$ will be called a {\em framing} of $H^2(X ; \mathbb{Z})$. Let $\Lambda_n$ denote the free abelian group $\mathbb{Z}^n$ of rank $n$ and let $e_1, \dots , e_n$ be the standard basis. Equip $\Lambda_n$ with the standard Euclidean inner product. Then a framing $\xi_1, \dots , \xi_n$ of $H^2(X ; \mathbb{Z})$ determines an isometry $\phi : \Lambda_n \to H^2(X ; \mathbb{Z})$ given by $\phi(e_i) = \xi_i$. Let $W_n = Aut( \Lambda_n)$ denote the symmetry group of $\Lambda_n $ equipped with its intersection form. Since the only classes of norm $1$ are $\pm e_1, \dots, \pm e_n$, it is easy to see that $W_n$ is isomorphic to a semidirect product
\[
W_n = S_n \ltimes \mathbb{Z}_2^n,
\]
where the symmetric group $S_n$ acts by permutation: $\sigma( e_i ) = e_{\sigma(i)}$ and the normal subgroup $\mathbb{Z}_2^n$ is generated by $\theta_1 , \dots , \theta_n$, where $\theta_i$ is the reflection in the hyperplane orthogonal to $e_i$:
\[
\theta_i( e_j ) = \begin{cases} e_j & j \neq i, \\ -e_j & j = i. \end{cases}
\]
$W_n$ is also the Weyl group of the root systems $B_n$ and $C_n$. Clearly $W_n$ is a subgroup of the isometry group of $\Lambda_n$. For the reverse inclusion, note that any isometry must permute the vectors of unit length which are $\pm e_1, \pm e_2, \dots , \pm e_n$. Hence any isometry of $\Lambda_n$ is given by a permutation of $e_1, \dots , e_n$, followed by some sign changes $e_i \mapsto -e_i$.

Let $Diff(X)$ denote the group of orientation preserving diffeomorphisms of $X$ with the $\mathcal{C}^\infty$-topology and $Diff_0(X)$ the subgroup acting trivially on $H^2(X ; \mathbb{Z})$. Equivalently, $Diff_0(X)$ is the subgroup of $Diff(X)$ preserving a framing of $H^2(X ; \mathbb{Z})$. By definition we have a short exact sequence
\[
1 \to Diff_0(X) \to Diff(X) \to K(X) \to 1
\]
where $K(X)$ is the image of the map $Diff(X) \to Aut( H^2( X ; \mathbb{Z}) )$, which sends a diffeomorphism $f : X \to X$ to the induced map $(f^{-1})^* : H^2(X ; \mathbb{Z}) \to H^2(X ; \mathbb{Z})$. Note note that if $X = \#^n \mathbb{CP}^2$, then $K(X)=Aut( H^2( X ; \mathbb{Z} )$. One sees this as follows. There is an orientation preserving diffeomorphism of $\mathbb{CP}^2$ which acts as $-1$ on $H^2( \mathbb{CP}^2 ; \mathbb{Z})$, namely complex conjugation. Such a diffeomorphism can be isotopied so as to act as the identity on a disc in $\mathbb{CP}^2$, hence can be extended to the connected sum $\#^n \mathbb{CP}^2$. Since we can do this for each summand, we see that $\theta_1 , \dots , \theta_n \in K(X)$. To see that $S_n \subset K(X)$, regard $X$ as $S^4$ with $\mathbb{CP}^2$ attached at $n$ points. Since these $n$ points can be permuted by diffeomorphisms of $S^4$, it follows that $S_n \subset K(X)$.

Fixing a framing $\xi_1, \dots , \xi_n$ of $H^2(X ; \mathbb{Z})$, we can identify $K(X)$ with a subgroup of $W_n$. Since $W_n$ is finite, so is $K(X)$. Taking classifying spaces, we see that $BDiff_0(X)$ has the structure of a principal $K(X)$-bundle over $BDiff(X)$. We now define
\[
\overline{BDiff}(X) = BDiff_0(X) \times_{K(X)} W_n.
\]
So $\overline{BDiff}(X)$ is a principal $W_n$-bundle over $BDiff(X)$. Let $p : \overline{BDiff}(X) \to BDiff(X)$ be the covering map. Since $p$ is a finite covering, it follows that the pullback map $p^* : H^*( BDiff(X) ; \mathbb{Q}) \to H^*( \overline{BDiff}(X) ; \mathbb{Q})$ is injective and that the image is precisely the $W_n$-invariant part of $H^*(\overline{BDiff}(X) ; \mathbb{Q})$. Therefore we may identify the tautological ring $R^*(X)$ with a subring of $H^*( \overline{BDiff}(X) ; \mathbb{Q})$
\[
R^*(X) \subseteq H^*(BDiff(X) ; \mathbb{Q} ) \subseteq H^*(\overline{BDiff}(X) ; \mathbb{Q}).
\]

\begin{remark}
Since $\overline{BDiff}(X) = BDiff_0(X) \times_{K(X)} W_n$, we have a fibration $BDiff_0(X) \to \overline{BDiff}(X) \to W_n/K(X)$. But $W_n/K(X)$ is a finite discrete set, so $\overline{BDiff}(X)$ is just the disjoint union of $|W_n/K(X)|$ copies of $BDiff_0(X)$. For this reason it makes little difference whether we work with $BDiff_0(X)$ or $\overline{BDiff}(X)$. We prefer to use $\overline{BDiff}(X)$ because the whole isometry group $W_n$ acts on this space. Note also that for $X = \#^n \mathbb{CP}^2$, we have $K(X) = W_n$ and so $\overline{BDiff}(X) = BDiff_0(X)$ in this case.
\end{remark}

Let $\pi : E \to B$ be a family with fibres diffeomorphic to $X$. Then $E$ admits a reduction of structure to $Diff_0(X)$ if and only if the monodromy action of $\pi_1(B)$ on $H^2$ of the fibres is trivial. In such a case, if we choose a framing $\xi_1, \dots , \xi_n$ of a single fibre and parallel translate, we obtain a framing $\xi_1(b) , \dots , \xi_n(b)$ of $H^2(X_b ; \mathbb{Z})$ for each $b \in B$ such that the framing varies continuously with $b$. Henceforth we will restrict attention to families $\pi : E \to B$ equipped with a reduction of structure group to $Diff_0(X)$. We assume further that a framing has been chosen.

\begin{proposition}\label{prop:degen}
Let $\pi : E \to B$ be a family with structure group $Diff_0(X)$. Then the Leray-Serre spectral sequence for $H^*( E ; \mathbb{Q})$ degenerates at $E_2$.
\end{proposition}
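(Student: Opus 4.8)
The plan is to show that all differentials $d_r$ in the Leray–Serre spectral sequence vanish for $r \ge 2$, using the fact that a fibrewise framing trivialises the cohomology of the fibres in degree $2$, together with the multiplicative structure of the spectral sequence. Since $X$ is simply-connected, $H^*(X;\mathbb{Q})$ is concentrated in degrees $0,2,4$, with $H^0 = H^4 = \mathbb{Q}$ and $H^2 \cong \mathbb{Q}^n$. Because the structure group has been reduced to $Diff_0(X)$, the local system $\mathcal{H}^2$ on $B$ with fibre $H^2(X_b;\mathbb{Q})$ is trivial, and likewise $\mathcal{H}^0$ and $\mathcal{H}^4$ are trivial. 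Hence the $E_2$-page is $E_2^{p,q} = H^p(B;\mathbb{Q}) \otimes H^q(X;\mathbb{Q})$, nonzero only for $q \in \{0,2,4\}$.

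First I would dispose of the row $q=0$: the classes in $E_2^{p,0}$ are pullbacks from the base, hence permanent cocycles, so all differentials out of the bottom row vanish (equivalently, $d_r$ is $H^*(B;\mathbb{Q})$-linear). Next, the key point is that the generators of $H^2(X;\mathbb{Q})$ survive to $E_\infty$. Indeed, the chosen fibrewise framing $\xi_1(b),\dots,\xi_n(b)$ provides, for each $i$, a global class $\overline{\xi}_i \in H^2(E;\mathbb{Q})$ restricting to $\xi_i$ on each fibre — concretely, the framing identifies the vertical cohomology $\mathcal{H}^2$ with the trivial bundle, and one can produce the $\overline{\xi}_i$ by a Leray–Hirsch type argument, or simply observe that $c_1$ of the associated complex line bundles (in the case $X = \#^n\mathbb{CP}^2$) give such extensions; in general the framing is precisely a section of the bundle of framings, which extends each $\xi_i$ over $E$. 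The existence of a class in $H^2(E;\mathbb{Q})$ restricting to a given element of $H^2(X;\mathbb{Q})$ forces the corresponding element of $E_2^{0,2}$ to be a permanent cocycle, i.e. $d_2 = d_3 = 0$ on $E_r^{0,2}$. Combined with $H^*(B;\mathbb{Q})$-linearity, this kills $d_r$ on the entire row $q=2$.

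It then remains to handle the row $q=4$. The only potentially nonzero differential here is $d_2 \colon E_2^{p,4} \to E_2^{p+2,3} = 0$ and $d_3 \colon E_3^{p,4} \to E_3^{p+3,2}$, and in general $d_r \colon E_r^{p,4} \to E_r^{p+r,5-r}$, which can only be nonzero for $r=3$ (target in $q=2$) or $r=5$ (target in $q=0$). For $r=3$: the generator of $H^4(X;\mathbb{Q})$ is the cup product $\xi_i \smile \xi_i$ of a framing class with itself (since $\langle \xi_i,\xi_i\rangle = 1$), so it is the image under the ring structure of permanent cocycles from the $q=2$ row, hence itself a permanent cocycle by the Leibniz rule — all differentials out of $E_r^{0,4}$ vanish, and by linearity $d_r$ vanishes on all of row $q=4$. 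This completes the degeneration at $E_2$.

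The main obstacle is the second step — producing the global classes $\overline{\xi}_i \in H^2(E;\mathbb{Q})$ extending the framing, i.e. verifying that the edge map $H^2(E;\mathbb{Q}) \to E_\infty^{0,2} \hookrightarrow E_2^{0,2} = H^2(X;\mathbb{Q})$ is surjective. This is where the hypothesis that the structure group reduces to $Diff_0(X)$ and a framing has been fixed is used essentially: the framing is literally a section of the associated bundle with fibre the set of framings, and pulling back the tautological degree-$2$ classes along this section (or, cleanly, observing that $BDiff_0(X)$ carries universal classes $\xi_i \in H^2$ of the universal fibration because the monodromy is trivial and the fibration of $K(\mathbb{Z},2)$'s splits rationally) gives the desired extensions. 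Once this surjectivity is in hand, everything else follows formally from multiplicativity and the sparsity of the $E_2$-page.
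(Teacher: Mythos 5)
There is a genuine gap at the one step of this argument that is not purely formal. Your proof correctly reduces everything to showing that the framing classes $\xi_1,\dots,\xi_n \in E_2^{0,2}$ are permanent cocycles (equivalently, that $d_3(\xi_i)=0$, since $d_3 : E_3^{0,2} \to E_3^{3,0}$ is the only possibly nonzero differential out of that position), but none of the justifications you offer for this actually works. Appealing to ``a Leray--Hirsch type argument'' is circular: Leray--Hirsch takes the existence of global classes restricting to a fibre basis as its \emph{hypothesis}. The triviality of the local system $\mathcal{H}^2$ (``the framing identifies the vertical cohomology with the trivial bundle'') only identifies the $E_2$-page; it says nothing about differentials, and a section of the bundle of framings lives over $B$, not over $E$, so it produces no class in $H^2(E;\mathbb{Q})$. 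Most seriously, the claim that $c_1$ of the fibrewise line bundles extends over $E$ is \emph{false integrally}: the obstruction to extending such a line bundle is exactly $d_3(\xi_i) \in H^3(B;\mathbb{Z})$, which is in general a nonzero torsion class --- this is precisely the issue the paper must work around later (Lemma \ref{lem:splitting2}, using Serre's theorem on torsion classes and $PU(m)$-bundles). So the statement you need is only true rationally, and it requires an actual argument.

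The paper's proof supplies the missing ingredient by running your two steps in the opposite order. The Euler class $e \in H^4(E;\mathbb{Q})$ of the vertical tangent bundle is a genuinely global class restricting on each fibre to $\chi(X) = n+2 \neq 0$ times the generator of $H^4(X_b;\mathbb{Q})$; hence the generator of $E_2^{0,4}$ is a permanent cocycle \emph{first}. Then the Leibniz rule applied to $\xi_j^2 \in E_2^{0,4}$ gives $0 = d_3(\xi_j^2) = 2\xi_j\, d_3(\xi_j)$ in $E_3^{3,2} \cong H^3(B;\mathbb{Q}) \otimes H^2(X;\mathbb{Q})$, and since multiplication by $\xi_j$ is injective there, $d_3(\xi_j)=0$. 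This yields the global classes $x_j \in H^2(E;\mathbb{Q})$ you wanted, and Leray--Hirsch finishes the proof. Your argument becomes correct once you replace your second step with this (or some other valid) external input; as written, the proof assumes what it needs to prove.
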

\begin{proof}
It suffices prove the result when $B$ is connected. Let $e \in H^4( E ; \mathbb{Q})$ denote the Euler class of the vertical tangent bundle. For each $b \in B$, we have that $e|_{X_b}$ is $(2+n)$ times a generator of $H^4(X_b ; \mathbb{Q})$. It follows that all the differentials of the form $d_r : E_r^{0,4} \to E_r^{r,5-r}$ are zero. Moreover the differentials for $r$ odd are all zero because $H^*(X ; \mathbb{Q})$ is non-zero only in even degrees. Next, note that $E_2^{0,2} \cong H^2( X ; \mathbb{Q})$ (since $B$ is connected). Thus we can identify $\xi_1, \dots , \xi_n$ with classes in $E_2^{0,2}$. Now $\xi_j^2 \in E_2^{0,4}$, so 
\[
0 = d_3( \xi_j^2) = 2 \xi_j d_3(\xi_j) \in E_3^{3,2} \cong H^2(X ; \mathbb{Q}) \otimes H^3(B ; \mathbb{Q}).
\]
Hence $d_3(\xi_j) = 0$. It follows that there exist classes $x_1, \dots x_n \in H^2( E ; \mathbb{Q})$ such that $x_j|_{X_b} = \xi_j(b)$. Now the result follows by the Leray-Hirsch theorem.
\end{proof}

As seen in the proof of Proposition \ref{prop:degen}, there exist classes $x_1, \dots x_n \in H^2(E ; \mathbb{Q})$ such that $x_j|_{X_b} = \xi_j(b)$ (note that in general the classes $x_j$ can't be taken to lie in $H^2(E ; \mathbb{Z})$). The $x_i$ are not unique because if $a \in H^2(B ; \mathbb{Q})$ then $x_j + \pi^*(a)$ also restricts to $\xi_j(b)$ on $X_b$. From the Leray-Serre spectral sequence it is clear that the $x_j$ are unique up to such shifts.\\

Let 
\[
\int_{E/B} : H^k( E ; \mathbb{Q}) \to H^{k-4}( B ; \mathbb{Q})
\]
denote fibre integration. We clearly have
\[
\int_{E/B} x_j = 0, \quad \int_{E/B} x_j^2 = 1, \quad \int_{E/B} x_i x_j = 0
\]
for all $i,j$ with $j \neq i$. Let $e \in H^4( E ; \mathbb{Q})$ denote the Euler class and $p_j \in H^{4j}( E ; \mathbb{Q})$ the Pontryagin classes of the vertical tangent bundle. Since the fibres are $4$-dimensional, we have $p_j = 0$ for $j > 2$ and $p_2 = e^2$. So all rational characteristic classes of the vertical tangent bundle can be expressed in terms of $p_1$ and $e$. From the Gauss-Bonnet and signature theorems, we have
\[
\int_{E/B} e = \chi(X) = n+2, \quad \int_{E/B} p_1 = 3\sigma(X) = 3n.
\]

\begin{proposition}\label{prop:classes}
Let $\pi : E \to B$ be a family with structure group $Diff_0(X)$ and framing $\xi_1, \dots, \xi_n$. Then there exist uniquely determined classes $x_1, \dots x_n \in H^2(E ; \mathbb{Q})$ and $\nu \in H^4(E ; \mathbb{Q})$ such that
\begin{itemize}
\item[(1)]{$x_j |_{X_b} = \xi_j(b)$ for $j=1,\dots n$,}
\item[(2)]{$\int_{E/B} x_j^3 = 0$ for $j=1,\dots n$,}
\item[(3)]{$\int_{E/B} \nu = 1$,}
\item[(4)]{$\int_{E/B} x_j \nu = 0$ for $j=1,\dots n$,}
\item[(5)]{$\int_{E/B} \nu^2 =0$.}
\end{itemize}
\end{proposition}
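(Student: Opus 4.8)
The plan is to start from the classes $x_1, \dots, x_n \in H^2(E;\mathbb{Q})$ produced in Proposition \ref{prop:degen}, which are unique only up to shifts $x_j \mapsto x_j + \pi^*(a_j)$ with $a_j \in H^2(B;\mathbb{Q})$, and to show that these shift ambiguities can be used to normalise the cubic fibre integrals. First I would compute how $\int_{E/B} x_j^3$ transforms under a shift. Writing $x_j' = x_j + \pi^*(a_j)$ and using the projection formula together with $\int_{E/B} x_j = 0$, $\int_{E/B} x_j^2 = 1$, $\int_{E/B} x_i x_j = 0$ for $i \neq j$, one finds
\[
\int_{E/B} (x_j')^3 = \int_{E/B} x_j^3 + 3 a_j \int_{E/B} x_j^2 + 3 a_j^2 \int_{E/B} x_j + a_j^3 \int_{E/B} 1 = \int_{E/B} x_j^3 + 3 a_j,
\]
since $\int_{E/B} 1 = 0$ for degree reasons. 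Hence setting $a_j = -\tfrac{1}{3}\int_{E/B} x_j^3 \in H^6(B;\mathbb{Q})$ — wait, that is the wrong degree; in fact $\int_{E/B} x_j^3 \in H^2(B;\mathbb{Q})$, which is exactly the degree of $a_j$, so $a_j = -\tfrac13 \int_{E/B} x_j^3$ is a legitimate choice. With this choice $\int_{E/B} (x_j')^3 = 0$, giving existence of $x_j$ satisfying (1) and (2). For uniqueness, observe that any further shift $x_j \mapsto x_j + \pi^*(b_j)$ preserving (2) must have $3 b_j = 0$, i.e. $b_j = 0$, so the normalised $x_j$ are unique.

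Next I would construct $\nu$. Start with any class $\nu_0 \in H^4(E;\mathbb{Q})$ restricting to a generator of $H^4(X_b;\mathbb{Q})$ — for instance $\nu_0 = \tfrac{1}{n+2} e$, or better, $\nu_0 = x_1^2$ (which restricts to $\xi_1(b)^2$, the positive generator). By the Leray--Hirsch description of $H^*(E;\mathbb{Q})$ as a free $H^*(B;\mathbb{Q})$-module on $1, x_1, \dots, x_n, \nu_0$, the general class restricting to a generator on each fibre is $\nu = \nu_0 + \sum_j \pi^*(c_j) x_j + \pi^*(d)$ with $c_j \in H^2(B;\mathbb{Q})$, $d \in H^4(B;\mathbb{Q})$. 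I would then impose conditions (3), (4), (5) in that order. Condition (3): $\int_{E/B}\nu = \int_{E/B}\nu_0 + \sum_j c_j \int_{E/B} x_j + d\int_{E/B} 1 = \int_{E/B}\nu_0$, which already equals $1$ for the choice $\nu_0 = x_1^2$, so (3) holds automatically and imposes no constraint — in fact this shows $d$ is still free at this stage. Condition (4): $\int_{E/B} x_j \nu = \int_{E/B} x_j \nu_0 + \sum_k c_k \int_{E/B} x_j x_k + d \int_{E/B} x_j = \int_{E/B} x_j \nu_0 + c_j$ (using $\int_{E/B} x_j x_k = \delta_{jk}$), so $c_j$ is uniquely forced to be $c_j = -\int_{E/B} x_j \nu_0$. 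Condition (5): after fixing the $c_j$, compute $\int_{E/B}\nu^2$; expanding and using the known fibre integrals plus the now-determined $c_j$, one gets $\int_{E/B}\nu^2 = (\text{something}) + 2 d\int_{E/B}\nu_0 = (\text{something}) + 2d$, so there is a unique $d \in H^4(B;\mathbb{Q})$ making this vanish. This yields existence; uniqueness follows because (4) pins down the $c_j$ and (5) then pins down $d$, with no remaining freedom.

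I expect the only mild subtlety — not really an obstacle — to be bookkeeping the degrees: $\int_{E/B} x_j^3 \in H^2(B)$, $\int_{E/B} x_j \nu_0 \in H^2(B)$, $\int_{E/B}\nu_0^2 \in H^4(B)$, so that the shift parameters $a_j, c_j$ live in $H^2(B;\mathbb{Q})$ and $d \in H^4(B;\mathbb{Q})$, exactly matching the degrees of the available ambiguities. The key structural inputs are: the Leray--Hirsch theorem (Proposition \ref{prop:degen}), which guarantees $H^*(E;\mathbb{Q})$ is free over $H^*(B;\mathbb{Q})$ on $1, x_1,\dots,x_n,\nu_0$ so that the most general correction terms are precisely $\pi^*$-classes times these generators; the projection formula for fibre integration; and the tabulated values $\int_{E/B} x_j = 0$, $\int_{E/B} x_i x_j = \delta_{ij}$, $\int_{E/B} 1 = 0$. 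One should also double-check that redefining the $x_j$ in the first step does not disturb the setup for $\nu$: since $\nu$ is built afterwards using the already-normalised $x_j$, this causes no circularity. Everything reduces to solving a triangular system of affine equations over the ring $H^*(B;\mathbb{Q})$, which has a unique solution.
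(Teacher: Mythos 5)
Your proposal is correct and follows essentially the same route as the paper: normalise each $x_j$ by the shift $x_j \mapsto x_j - \tfrac{1}{3}\pi^*\bigl(\int_{E/B} x_j^3\bigr)$, take $\nu_0 = x_1^2$, subtract $\sum_j \pi^*\bigl(\int_{E/B} x_j \nu_0\bigr)x_j$ to arrange (4), and finally subtract $\tfrac{1}{2}\pi^*\bigl(\int_{E/B}\nu_1^2\bigr)$ to arrange (5). Your uniqueness argument via the triangular system of affine equations is just a more explicit version of what the paper calls straightforward.
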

\begin{proof}
We already saw that there exist classes $y_1, \dots , y_n \in H^2( E ; \mathbb{Q})$ such that $y_j |_{X_b} = \xi_j(b)$. Now set
\[
x_j = y_j - \dfrac{1}{3} \pi^*\left( \int_{E/B} y_j^3 \right).
\]
Then it is straightforward that the $x_j$ satisfy (1) and (2). Now let $\nu_0 = x_1^2 \in H^4( E ; \mathbb{Q})$. This satisfies (3). Now set
\[
\nu_1 = \nu_0 - \sum_{j=1}^n \pi^*\left( \int_{E/B} x_j \nu_0\right) x_j.
\]
Then $\nu_1$ clearly satisfies (3) and (4).  Moreover any other class satisfying (3) and (4) must be of the form $\nu_1 + \pi^*( a)$ for some $a \in H^4(B ; \mathbb{Q})$. Set
\[
\nu = \nu_1 - \dfrac{1}{2} \pi^* \left( \int_{E/B} \nu_1^2 \right).
\]
Then $\nu$ satisfies (3),(4),(5). Uniqueness of $\nu$ and the $x_i$ is straightforward.
\end{proof}

In summary, $H^*(E ; \mathbb{Q})$ is a free $H^*(B ; \mathbb{Q})$-module with a uniquely determined basis $1,x_1, \dots , x_n , \nu$ satisfying
\begin{itemize}
\item[(1)]{ $\int_{E/B} 1 = 0$,}
\item[(2)]{ $\int_{E/B} x_j = 0$ for $j=1,\dots n$,}
\item[(3)]{ $\int_{E/B} x_j^2 = 1$ for $j=1,\dots n$,}
\item[(4)]{ $\int_{E/B} x_i x_j = 0$ for $i,j=1,\dots n$, $i \neq j$,}
\item[(5)]{$\int_{E/B} x_j^3 = 0$ for $j=1,\dots n$,}
\item[(6)]{$\int_{E/B} \nu = 1$,}
\item[(7)]{$\int_{E/B} x_j \nu = 0$ for $j=1,\dots n$,}
\item[(8)]{$\int_{E/B} \nu^2 =0$.}
\end{itemize}

The cup product on $H^*(E ; \mathbb{Q})$ will be completely determined by the products
\[
x_i x_j \text{ for } i \neq j, \quad x_i^2, \quad x_i \nu, \quad \nu^2.
\]
By (1)-(8) above, these products must have the form
\begin{equation*}
\begin{aligned}
x_i x_j &= \sum_{k} D_{ij}^k x_k + E_{ij}, \\
x_i^2 & = \nu + \sum_{j} F_{ij} x_j + G_i \\
x_i \nu &= \sum_{j} I_{ij} x_j + J_i \\
\nu^2 &= \sum_j K_j x_j + \omega
\end{aligned}
\end{equation*}
for some classes $D_{ij}^k, F_{ij} \in H^2(B ; \mathbb{Q})$, $E_{ij}, G_i, I_{ij} \in H^4(B ; \mathbb{Q})$, $J_i, K_i \in H^6(B ; \mathbb{Q})$, $\omega \in H^8(B ; \mathbb{Q})$. We can assume also that $D_{ij}^k$ is symmetric in $i$ and $j$. Note that the classes $D_{ij}^k , \dots , K_i$ are uniquely determined because $1,x_1, \dots , x_n , \nu$ is a basis for $H^*(E ; \mathbb{Q})$ as a $H^*(B ; \mathbb{Q})$-module.\\

\begin{proposition}\label{prop:ident1}
We have the following identities:
\begin{equation*}
\begin{aligned}
F_{ij} &= D_{ij}^i \text{ for } i \neq j, \\
F_{ii} &= 0, \\
I_{ij} &= E_{ij} \text{ for } i \neq j, \\
I_{ii} &= G_i, \\
K_i &= J_i.
\end{aligned}
\end{equation*}
\begin{proof}
We have 
\[
\int_{E/B} x_i^2 x_j = \int_{E/B} x_i \left( \sum_{k} D_{ij}^k x_k + E_{ij} \right) = D_{ij}^i.
\]
On the other hand
\[
\int_{E/B} x_i^2 x_j = \int_{E/B} x_j \left(  \nu + \sum_{k} F_{ik} x_k + G_i \right) = F_{ij}.
\]
Equating these gives $F_{ij} = D_{ij}^i$ for $i \neq j$. Similarly from $\int_{E/B} x_i^3=0$ we get that $F_{ii}=0$. Evaluating $\int_{E/B} x_i x_j \nu$ two different ways gives $I_{ij} = E_{ij}$ for $i \neq j$, evaluating $\int_{E/B} x_i^2 \nu$ in two different ways gives $I_{ii} = G_i$ and evaluating $\int_{E/B} x_i \nu^2$ in two different ways gives $K_i = J_i$.
\end{proof}
\end{proposition}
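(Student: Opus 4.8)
The plan is to use the structure of $H^*(E;\mathbb{Q})$ established just above: the classes $1, x_1, \dots, x_n, \nu$ form a basis for $H^*(E;\mathbb{Q})$ as a free $H^*(B;\mathbb{Q})$-module, the cup product is determined by the listed structure constants, and fibre integration $\int_{E/B}$ is $H^*(B;\mathbb{Q})$-linear with $\int_{E/B}1 = 0$, $\int_{E/B}x_j = 0$ and $\int_{E/B}\nu = 1$. The key observation is that, for these reasons, $\int_{E/B}$ simply extracts the $\nu$-coefficient of a class written in the module basis. Moreover, the normalisations (4), (5), (7), (8) say precisely that none of the products $x_ix_j$ (for $i\neq j$), $x_j\nu$, $\nu^2$ carries a $\nu$-term, whereas $x_i^2 = \nu + \sum_j F_{ij}x_j + G_i$ carries $\nu$ with coefficient $1$. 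Hence, when one expands any cup product in the module basis, a $\nu$-term can be produced only by a factor of the form $x_i^2$.

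Granting this, each identity is obtained by computing a single fibre integral in two ways using associativity. For $F_{ij} = D_{ij}^i$ with $i\neq j$: on the one hand $x_i^2x_j = x_i(x_ix_j) = x_i\bigl(\sum_k D_{ij}^kx_k + E_{ij}\bigr)$, where the only $\nu$-term comes from the summand $D_{ij}^i\,x_i^2$, so $\int_{E/B}x_i^2x_j = D_{ij}^i$; on the other hand $x_i^2x_j = (x_i^2)x_j = \bigl(\nu + \sum_k F_{ik}x_k + G_i\bigr)x_j$, where the only $\nu$-term comes from $F_{ij}\,x_j^2$, so $\int_{E/B}x_i^2x_j = F_{ij}$. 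For $F_{ii} = 0$: the expansion $x_i^3 = (x_i^2)x_i$ has $\nu$-coefficient $F_{ii}$, while $\int_{E/B}x_i^3 = 0$ by (5).

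The remaining three identities follow by the identical device applied to the classes $x_ix_j\nu$, $x_i^2\nu$ and $x_i\nu^2$. Computing $\int_{E/B}x_ix_j\nu$ (with $i\neq j$) as $x_j(x_i\nu)$ gives $I_{ij}$ (from the term $I_{ij}\,x_j^2$) and as $(x_ix_j)\nu$ gives $E_{ij}$ (using that $x_k\nu$ has no $\nu$-term); computing $\int_{E/B}x_i^2\nu$ as $x_i(x_i\nu)$ gives $I_{ii}$ and as $(x_i^2)\nu$ gives $G_i$ (using that $\nu^2$ has no $\nu$-term); computing $\int_{E/B}x_i\nu^2$ as $x_i(\nu^2)$ gives $K_i$ and as $(x_i\nu)\nu$ gives $J_i$. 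Equating the two evaluations in each case yields the asserted identities.

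I do not expect any substantial obstacle here: the entire argument is a bookkeeping of which terms survive when a product is rewritten in the module basis, and this is dictated completely by the normalisations (1)--(8). The only point needing a little care is to choose, in each two-way computation, which factor to expand first so that the surviving $\nu$-term is read off immediately; commutativity of the cup product — and the consequent symmetries $D_{ij}^k = D_{ji}^k$ and $E_{ij} = E_{ji}$ — guarantees that the two evaluations are indeed computing the same fibre integral.
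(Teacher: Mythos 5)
Your proof is correct and follows essentially the same route as the paper: each identity is obtained by evaluating one of the fibre integrals $\int_{E/B}x_i^2x_j$, $\int_{E/B}x_i^3$, $\int_{E/B}x_ix_j\nu$, $\int_{E/B}x_i^2\nu$, $\int_{E/B}x_i\nu^2$ in two ways via associativity and the normalisations (1)--(8). Your observation that fibre integration just reads off the $\nu$-coefficient in the module basis is a clean reformulation of the same bookkeeping the paper carries out term by term.
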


After making the simplifications given by Proposition \ref{prop:ident1} we have
\begin{align}
x_i x_j &= \sum_{k} D_{ij}^k x_k + E_{ij}, \label{equ:21} \\
x_i^2 & = \nu + \sum_{j | j\neq i} D_{ij}^i x_j + G_i \label{equ:22} \\
x_i \nu &= G_i x_i + \sum_{j | j \neq i} E_{ij} x_j + J_i \label{equ:23} \\
\nu^2 &= \sum_j J_j x_j + \omega. \label{equ:24}
\end{align}


\section{Families Bauer--Furuta Theory}\label{sec:fambf}

Let $X$ be a compact, oriented, smooth $4$-manifold with $b_1(X)=0$. Let $\mathfrak{s}$ be a spin$^c$-structure on $X$ with characteristic $c = c_1(\mathfrak{s}) \in H^2(X ; \mathbb{Z})$. Let $d = (c^2 - \sigma(X))/8$ be the index of the associated spin$^c$ Dirac operator.

Let $S^1$ act on $\mathbb{C}$ by scalar multiplication and trivially on $\mathbb{R}$. As shown by Bauer and Furuta \cite{bafu}, one can take a finite dimensional approximation of the Seiberg--Witten equations for $(X,\mathfrak{s})$ to obtain an $S^1$-equivariant map
\[
f : (\mathbb{C}^{a} \oplus \mathbb{R}^b)^+ \to (\mathbb{C}^{a'} \oplus \mathbb{R}^{b'})^+
\]
for some $a,b,a',b' \ge 0$, where $a-a' = d$, $b'-b = b_+(X)$. Here $T^+$ denotes the one-point compactification of $T$. By construction, $f$ sends the point at infinity in $(\mathbb{C}^{a} \oplus \mathbb{R}^b)^+$ to the point at infinity in $(\mathbb{C}^{a'} \oplus \mathbb{R}^{b'})^+$. Additionally, $f$ can be chosen so that its restriction $f|_{(\mathbb{R}^b)^+} : (\mathbb{R}^b)^+ \to (\mathbb{R}^{b'})^+$ is the map induced by an inclusion of vector spaces $\mathbb{R}^b \subseteq \mathbb{R}^{b'}$. For the purposes of this paper, it is more convenient to look at the Seiberg--Witten equations on $X$ {\em with the opposite orientation}. Once again we obtain an $S^1$-equivariant map of the form
\begin{equation}\label{equ:f}
f : (\mathbb{C}^{a} \oplus \mathbb{R}^b)^+ \to (\mathbb{C}^{a'} \oplus \mathbb{R}^{b'})^+,
\end{equation}
but now $a,a',b,b'$ satisfy $a'-a = d$, $b'-b = b_-(X)$.

The process of taking a finite dimensional approximation of the Seiberg--Witten equations can be carried out in families \cite{szy}, \cite{bako}. Let $B$ be a compact smooth manifold. Consider a smooth family $\pi : E \to B$ with fibres diffeomorphic to $X$ and suppose that there is a spin$^c$-structure $\mathfrak{s}_{E/B}$ on $T(E/B)$ which restricts to $\mathfrak{s}$ on the fibres of $E$. Taking a finite dimensional approximation of the Seiberg--Witten equations for the family $E$ (with the opposite orientation on $X$), we obtain a family of maps of the form (\ref{equ:f}). More precisely, we obtain complex vector bundles $V,V'$ over $B$ of ranks $a,a'$, real vector bundles $U,U'$ over $B$ of ranks $b,b'$ and an $S^1$-equivariant map of sphere bundles
\[
f : S_{V , U} \to S_{V' , U'}
\]
covering the identity on $B$. Here $S_{V,U}, S_{V',U'}$ denote the fibrewise one-point compactifications of $V \oplus U$ and $V' \oplus U'$. The group $S^1$ acts on $V,V'$ by scalar multiplication and trivially on $U,U'$. The action of $S^1$ on the direct sums $V \oplus U$, $V' \oplus U'$ extends continuously to the fibrewise one-point compactifications $S_{V,U}, S_{V',U'}$. Moreover, the following relations hold in $K^0(B)$ and $KO^0(B)$ respectively:
\[
V' - V = D, \quad \quad U' - U = H^-(X),
\]
where $D \in K^0(B)$ is the families index of the family of spin$^c$ Dirac operators on $E$ determined by $\mathfrak{s}_{E/B}$ and $H^-(X)$ is the vector bundle on $B$ whose fibre over $b \in B$ is the space of harmonic anti-self-dual $2$-forms on the fibre of $E$ over $b$ (with respect to some smoothly varying fibrewise metric on $E$). By stabilising the map $f$, we can assume that $V,U$ are trivial vector bundles. As shown in \cite{bako}, the map $f$ may be constructed so as to satisfy two further properties. First, we may assume that $U' \cong U \oplus H^-(X)$ and that the restriction $f|_{S_U} : S_U \to S_{U'}$ is the map induced by the inclusion $U \to U'$. Second, we may assume that $f$ sends the point at infinity in each fibre of $S_{V,U}$ to the point at infinity of the corresponding fibre of $S_{V',U'}$. Let $B_{V,U} \subseteq S_{V,U}$ denote the section at infinity and similarly define $B_{V',U'} \subseteq S_{V',U'}$. Then $f$ sends $B_{V,U}$ to $B_{V',U'}$. Hence $f$ defines an $S^1$-equivariant map of pairs
\[
f : (S_{V,U} , B_{V,U} ) \to (S_{V',U'} , B_{V',U'}).
\]

\begin{theorem}\label{thm:fsw1}
Suppose that $\pi : E \to B$ is a smooth family of simply-connected, positive definite $4$-manifolds over a compact base $B$ and that $T(E/B)$ admits a spin$^c$-structure $\mathfrak{s}_{E/B}$. Let $D \in K^0( B)$ denote the index of the family of spin$^c$-Dirac operators associated to $\mathfrak{s}_{E/B}$. Then $c_j(D) = 0$ for $j > d$, where $d$ is the virtual rank of $D$.
\end{theorem}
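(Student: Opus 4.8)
The plan is to exploit the families Bauer--Furuta map $f : (S_{V,U}, B_{V,U}) \to (S_{V',U'}, B_{V',U'})$ to extract a Gysin/Borel-equivariant constraint on the Chern classes of $D = V' - V$. Since $X$ is positive definite and simply connected, $b_-(X) = n = b_2(X)$ and $b_+(X) = 0$, so after stabilising we may take $U$ trivial and $U' = U \oplus H^-(X)$ with $H^-(X)$ a rank-$n$ bundle over $B$; also $d = (c^2 + \sigma(X_{-}))/8$ computed on the orientation-reversed manifold, so the virtual rank of $D$ is $a' - a = -d$ in the excerpt's sign conventions — in any case the relevant point is that $V' - V = D$ has virtual rank equal to some fixed integer, call it $r = \mathrm{rk}(D)$, and we want $c_j(D) = 0$ for $j > r$.

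First I would pass to the $S^1$-equivariant (Borel) cohomology. Let $u \in H^2(BS^1; \mathbb{Q})$ be the generator. The map $f$ restricts to the identity-type inclusion on the $B_{V,U} \to B_{V',U'}$ part and on the $S_U \to S_{U'}$ part, so one gets a map of Thom spaces (relative to the sections at infinity) which, in $S^1$-equivariant cohomology, is a module map over $H^*_{S^1}(B) = H^*(B)[u]$. The Thom class of the fibrewise compactification $S_{V,U}$ relative to $B_{V,U}$ is, equivariantly, the equivariant Euler class of $V \oplus U$, which because $S^1$ acts by scalars on $V$ (rank $a$) and trivially on $U$ is $e_{S^1}(V) \cdot e(U) = \big(\sum_{k} c_k(V) u^{a-k}\big)\cdot e(U)$ — and similarly for $V', U'$. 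The existence of $f$ forces the equivariant Thom class upstairs to be divisible, in $H^*_{S^1}$, by the equivariant Thom class downstairs pulled back; more precisely $f^*$ of the Thom class of the target is $e(U') / e(U)$ times the equivariant Euler class of $V'$, and this must be a multiple (in the $H^*(B)[u]$-module $H^*_{S^1}$ of the source) of the equivariant Euler class of $V$. Dividing out the common $e(U)$ and the localisation-invertible pieces, the upshot is a divisibility relation: $\sum_k c_k(V') u^{a'-k}$ is divisible by $\sum_k c_k(V) u^{a-k}$ in $H^*(B; \mathbb{Q})[u]$ (after suitable localisation / up to the $H^-(X)$ Euler-class factor, which is nilpotent and can be handled separately since $B$ is compact).

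The second step is purely algebraic: from $\big(\sum_k c_k(V')u^{a'-k}\big) = Q(u)\cdot\big(\sum_k c_k(V)u^{a-k}\big)$ for some $Q(u) \in H^*(B;\mathbb{Q})[u]$ of $u$-degree $a'-a = r$, deduce that $c(D) = c(V')/c(V)$ is a polynomial of degree $\le r$. Indeed, writing $P_V(u) = u^a c(V)(u^{-1})$ and $P_{V'}(u) = u^{a'} c(V')(u^{-1})$ for the "reversed" generating polynomials, the relation $P_{V'} = Q\, P_V$ is exactly the statement that $c(V')/c(V)$, expanded as a formal power series, terminates after degree $\deg Q = r$; matching the top coefficients (coefficient of $u^0$ gives $c_a(V)$ divides appropriately, coefficient of $u^r$ determines $Q$'s leading term) and then reading off coefficients of $u^{-j}$ for $j > 0$ shows $c_{r+1}(D) = c_{r+2}(D) = \cdots = 0$. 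The nilpotent correction coming from $e(H^-(X))$ only contributes in positive cohomological degree and, being a factor that is a unit times a nilpotent in the localised ring, does not affect the vanishing once one tracks degrees carefully.

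The main obstacle I anticipate is making the "divisibility of equivariant Thom classes" step rigorous: $f$ is only a map of $S^1$-spaces over $B$, not a map of vector bundles, so one cannot literally pull back Euler classes of bundles — one must instead work with the equivariant cohomology of the Thom spaces (or the pair $(S_{V,U}, B_{V,U})$), identify the relevant classes there, use that $f^*$ is $H^*_{S^1}(B)$-linear and carries the preferred generator to the preferred generator, and control the behaviour at the fixed-point set (the section at infinity and the $S_U$-part, where $f$ is explicitly the inclusion). Handling the $S^1$-fixed locus correctly — i.e. checking that the part of $f$ over the fixed points is forced by the stated normalisations so that localisation gives the clean divisibility — is where the real content lies; everything after that is the formal manipulation of generating polynomials sketched above.
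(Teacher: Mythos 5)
Your proposal follows essentially the same route as the paper: pass to $S^1$-equivariant cohomology, use that the monopole map of pairs sends the equivariant Thom class of the target to a multiple $\beta\,\tau_{V,U}$ of the Thom class of the source, restrict to the locus where $f$ is the standard inclusion to convert this into the relation $e_{S^1}(V') = \beta\, e_{S^1}(V)$ with $\beta \in H^{2d}_{S^1}(B)$, and then read off from the generating polynomials that $c(D)$ terminates in degree $d$. That final algebraic step is fine even without trivialising $V$ (the paper stabilises so that $e_{S^1}(V)=x^a$, which makes the bookkeeping slightly cleaner, but your version with $c(V')/c(V)$ is equivalent).

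There is, however, one genuine error in your setup. For a positive definite $4$-manifold one has $b_+(X)=n$ and $b_-(X)=0$, not the other way around; with the paper's convention (Seiberg--Witten on the orientation-reversed manifold) the real part satisfies $b'-b=b_-(X)=0$, so $H^-(X)=0$ and $U'=U$. This vanishing is precisely what makes the argument work, and it is the reason the hypothesis ``positive definite'' appears in the theorem. Your proposal instead posits a rank-$n$ bundle $H^-(X)$ and then claims its Euler class can be discarded as ``a unit times a nilpotent.'' That patch would not work: the Euler class of a positive-rank bundle is a non-unit in positive degree, and multiplication by it is not injective, so from $e(H^-(X))\cdot c_j(D)=0$ one could not conclude $c_j(D)=0$. (Indeed, for indefinite $X$ the theorem is false in this form, which is a sanity check that no such cancellation argument can exist.) Fortunately the complication you were trying to handle is absent in the case at hand, so once you correct the computation of $b_\pm$ the rest of your argument goes through and coincides with the paper's proof.
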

\begin{proof}
This result is a variant of \cite[Theorem 1.1]{bar}. We give a streamlined proof. As explained above, taking a finite dimensional approximation of the Seiberg--Witten equations for the family $E$ (with opposite orientation on $X$), we obtain an $S^1$-equivariant monopole map $f : S_{V,U} \to S_{V',U'}$. Since $X$ is positive definite, $H^-(X) = 0$ and $U' = U$. So $f$ takes the form
\[
f : S_{V,U} \to S_{V',U},
\]
with the property that $f|_{S_U}$ is the identity $S_U \to S_U$. We also have that $V' - V = D$. Let $\tau_{V,U}$ and $\tau_{V',U}$ denote the $S^1$-equivariant Thom classes of $S_{V,U}$ and $S_{V',U}$. Consider the commutative diagram
\[
\xymatrix{
(S_{V,U} , B_{V,U}) \ar[r]^-f & (S_{V',U} , B_{V,U}) \\
(S_U , B_U) \ar[u]^-j \ar[r]^-{id} & (S_U , B_U) \ar[u]^-{j'}
}
\]
By the Thom isomorphism in equivariant cohomology, we must have
\[
f^*(\tau_{V',U}) = \beta \, \tau_{V,U}
\]
for some $\beta \in H^{2d}_{S^1}( B ; \mathbb{Z})$. On the other hand $j^*(\tau_{V,U}) = e_{S^1}(V) \tau_U$ and $(j')^*(\tau_{V',U}) = e_{S^1}(V')\tau_U$, where $e_{S^1}(V), e_{S^1}(V')$ denote the $S^1$-equivariant Euler classes of $V$ and $V'$ and $\tau_U$ is the $S^1$-equivariant Thom class of $U$. Therefore
\begin{align*}
\beta e_{S^1}(V) \tau_U &= j^*( \beta \tau_{V,U} ) \\
&= j^* f^*( \tau_{V',U})  \\
&= (j')^*( \tau_{V',U}) \\
&= e_{S^1}(V') \tau_U.
\end{align*}
Hence
\begin{equation}\label{equ:eebeta}
e_{S^1}(V') = \beta e_{S^1}(V)
\end{equation}
for some $\beta \in H^{2d}_{S^1}(B ; \mathbb{Z})$. Note that since $S^1$ acts trivially on $B$ we have $H^*_{S^1}(B ; \mathbb{Z}) \cong H^*( B ; \mathbb{Z})[x]$, where $H^*_{S^1}( pt ; \mathbb{Z}) = \mathbb{Z}[x]$. Using a splitting principle argument, it is easy to see that if $S^1$ acts on a complex rank $m$ vector bundle $W$ by scalar multiplication, then
\[
e_{S^1}(W) = x^m + x^{m-1}c_1(W) + \cdots + c_m(W).
\]
Now by stabilisation, we may assume that $V$ is a trivial bundle: $V \cong \mathbb{C}^a$. Then $V'$ has the same Chern classes as $D$. So
\[
e_{S^1}(V') = x^{a'} + x^{a'-1}c_1(D) + \cdots + c_{a'}(D), \quad e_{S^1}(V) = x^a.
\]
Then, writing
\[
\beta = \beta_0 x^d + \beta_1 x^{d-1} + \cdots + \beta_d,
\] 
Equation (\ref{equ:eebeta}) becomes
\[
x^{a'} + x^{a'-1}c_1(D) + \cdots + c_{a'}(D) = \beta_0 x^{d+a} + \beta_1 x^{d+a-1} + \cdots + \beta_d x^a.
\]
Then since $d+a = a'$, it follows that $\beta_j = c_j(D)$ for $0 \le j \le d$ and that $c_j(D) = 0$ for $j > d$.
\end{proof}

\begin{lemma}\label{lem:splitting}
Let $P \to B$ be a principal $PU(m)$-bundle and $\pi : E \to B$ the associated $\mathbb{CP}^{m-1}$-bundle. Then the pullback $\pi^*(P)$ of $P$ to the total space of $E$ admits a lift of structure group to $U(m)$.
\end{lemma}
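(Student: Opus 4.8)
The plan is to construct the lift explicitly from the group theory of the central extension $U(1) \to U(m) \to PU(m)$, exploiting the fact that the total space $E$ of the projectivisation is itself a homogeneous quotient of $P$. Write $\rho \colon U(m) \to PU(m)$ for the quotient map.

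First I would identify $\mathbb{CP}^{m-1}$ with the coset space $PU(m)/H$, where $H \subset PU(m)$ is the stabiliser of the line $[e_1]$. Writing elements of $U(m)$ in block form with respect to $\mathbb{C}^m = \mathbb{C}e_1 \oplus \mathbb{C}^{m-1}$, one has $H = \{\,[\mathrm{diag}(\lambda,B)] : \lambda \in U(1),\ B \in U(m-1)\,\}$, and since $[\mathrm{diag}(\lambda,B)] = [\mathrm{diag}(1,\lambda^{-1}B)]$, the block embedding $A \mapsto \mathrm{diag}(1,A)$ descends to an isomorphism $U(m-1) \xrightarrow{\ \sim\ } H$. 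Its inverse composed with the block inclusion gives a group homomorphism $s \colon H \to U(m)$ with $\rho \circ s = \iota_H$, the inclusion $H \hookrightarrow PU(m)$. This is the one mildly non-obvious point: although $H$ is defined a priori only as a quotient of a subgroup of $U(m)$, it in fact lifts to a subgroup of $U(m)$ (namely the standard copy of $U(m-1)$).

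Next I would use that $E = P \times_{PU(m)} (PU(m)/H) = P/H$, so that the quotient map $q \colon P \to P/H = E$ is a principal $H$-bundle over $E$. This exhibits a reduction of structure group of $\pi^{*}P$ to $H$: concretely, the map $P \times_{H} PU(m) \to \pi^{*}P = E \times_{B} P$, $[p,\bar g] \mapsto ([p],\, p\bar g)$, is an isomorphism of principal $PU(m)$-bundles over $E$, so $\pi^{*}P \cong P \times_{H} PU(m)$.

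Finally, I would extend the structure group of the $H$-bundle $q \colon P \to E$ along $s$: the associated bundle $\widehat{P} := P \times_{H,\,s} U(m)$ is a principal $U(m)$-bundle over $E$, and since $\rho \circ s = \iota_H$ we get $\widehat{P} \times_{U(m)} PU(m) = P \times_{H} PU(m) \cong \pi^{*}P$. Hence $\widehat{P}$ is the desired lift. I do not expect any serious obstacle beyond organising this bookkeeping; as a sanity check, one can alternatively argue non-constructively that the obstruction to lifting $\pi^{*}P$ to $U(m)$ lies in $H^{3}(E;\mathbb{Z})$ and equals the pullback of the Dixmier--Douady class $w \in H^{3}(B;\mathbb{Z})$ of $P$, which vanishes because $w$ is (up to sign) the transgression $d_{3}$ of the fibrewise hyperplane class in the Leray--Serre spectral sequence of $\pi$, and transgressive classes die on pullback to the total space. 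The explicit construction above is cleaner, so that is what I would write up.
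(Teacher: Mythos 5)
Your proof is correct, but it takes a genuinely different route from the paper's. The paper argues with \v{C}ech data: it lifts the $PU(m)$-valued transition functions $g_{ij}$ of $P$ locally to $U(m)$, records the resulting $U(1)$-valued cocycle discrepancy $\zeta_{ijk}$, and then builds an honest vector bundle $W$ on $E$ by coupling the lifted transition functions to the tautological direction in the fibre $\mathbb{CP}^{m-1}$ and quotienting by a diagonal $\mathbb{C}^*$-action, so that $\zeta_{ijk}$ cancels; in effect it constructs $\mathcal{O}_E(1)$ tensored with the locally defined rank-$m$ bundle, whose projectivisation is $\pi^*E$ and whose frame bundle lifts $\pi^*(P)$. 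You instead work coordinate-freely: you observe that $E = P/H$ for the isotropy subgroup $H \subset PU(m)$ of a line, that $q\colon P\to E$ therefore exhibits a canonical reduction of $\pi^*(P)$ to $H$ (a general fact for bundles associated to homogeneous fibres), and that $H$ lifts isomorphically to the standard $U(m-1)\subset U(m)$, so extension along that lift produces the desired $U(m)$-bundle. The paper's construction has the virtue of exhibiting the lift as a concrete vector bundle on $E$ given by explicit transition functions; yours avoids local trivialisations and cocycle bookkeeping entirely and isolates the single group-theoretic fact that makes the lemma work (the isotropy group of a point of $\mathbb{CP}^{m-1}$ splits the central extension over itself). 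The two resulting $U(m)$-bundles differ at most by a line-bundle twist, which is irrelevant to the statement. Your spectral-sequence sanity check is also consistent with how the paper actually uses the lemma in Lemma 3.3, where $d_3$ of the fibrewise classes is identified with lifting obstructions of $PU(m)$-bundles via Serre's theorem.
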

\begin{proof}
Let $G \subset PU(m)$ be the subgroup of $PU(m)$ fixing a point in $\mathbb{CP}^{m-1}$. Then clearly $\pi^*(P)$ admits a reduction of structure to $G$. On the other hand it is easy to see that $G \simeq U(m-1)$ and that the inclusion $G \to PU(m)$ factors through the projection $U(m) \to PU(m)$. Therefore $\pi^*(P)$ admits a lift of structure group to $U(m)$.
\end{proof}

\begin{lemma}\label{lem:splitting2}
There exists a fibre bundle $\rho : F \to B$ such that :
\begin{itemize}
\item[(1)]{$\rho^* : H^*(B ; \mathbb{Q}) \to H^*(F ; \mathbb{Q})$ is injective.}
\item[(2)]{For $i = 1, \dots , n$ there exist classes $\zeta_i \in H^2( \rho^*(E) ; \mathbb{Z})$ such that $\zeta_i$ restricted to the fibres of $\rho^*(E)$ equals $\xi_i$.}
\end{itemize}
\end{lemma}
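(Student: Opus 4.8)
The plan is to locate the obstruction to the existence of the $\zeta_i$ as a torsion class in $H^3(B;\mathbb{Q})$-trivial degree, and then to build $F$ as a product of projective bundles engineered, via Lemma \ref{lem:splitting}, to pull these obstructions back to zero. Everything else is spectral-sequence bookkeeping.

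First I would analyse the \emph{integral} Leray--Serre spectral sequence of $\pi : E \to B$ (assume $B$ connected, which costs nothing). Since $H^1(X;\mathbb{Z}) = 0$ we have $E_2^{2,1} = 0$, so each $\xi_i \in H^2(X;\mathbb{Z}) = E_2^{0,2}$ is a $d_2$-cycle; put
\[
o_i = d_3(\xi_i) \in E_3^{3,0} = H^3(B;\mathbb{Z}).
\]
For $r \ge 4$ the target $E_r^{r,3-r}$ of $d_r$ on $E_r^{0,2}$ vanishes, so the only obstruction to $\xi_i$ surviving to $E_\infty$ is the single class $o_i$; moreover $\xi_i \in E_\infty^{0,2}$ if and only if $\xi_i$ lifts to a class in $H^2(E;\mathbb{Z})$ restricting to $\xi_i$ on every fibre, since the fibre restriction $H^2(E;\mathbb{Z}) \to H^2(X;\mathbb{Z})$ is the composite $H^2(E;\mathbb{Z}) \twoheadrightarrow E_\infty^{0,2} \hookrightarrow E_2^{0,2}$. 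The crucial observation is that each $o_i$ is torsion: by Proposition \ref{prop:degen} the rational spectral sequence degenerates at $E_2$, so the natural map to it sends $o_i$ to $d_3(\xi_i)=0$ in the rational $E_3^{3,0} = H^3(B;\mathbb{Q})$; as $B$ is compact, $o_i$ is torsion.

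Next I would construct $F$. Fix $i$. Since $o_i$ is a torsion class in $H^3(B;\mathbb{Z})$ and $B$ is compact, it is realised as the lifting obstruction $\delta(P_i)$ of some principal $PU(m_i)$-bundle $P_i \to B$, where $\delta$ is the connecting homomorphism of $1 \to U(1) \to U(m_i) \to PU(m_i) \to 1$ (that every torsion class of $H^3$ of a finite complex arises this way is classical). Let $\pi_i : \mathbb{P}_i = P_i \times_{PU(m_i)} \mathbb{CP}^{m_i - 1} \to B$ be the associated projective bundle. By Lemma \ref{lem:splitting}, $\pi_i^*(P_i)$ admits a lift of structure group to $U(m_i)$ over $\mathbb{P}_i$, hence $\pi_i^*(o_i) = \delta(\pi_i^*(P_i)) = 0$ in $H^3(\mathbb{P}_i ; \mathbb{Z})$; on the other hand $\pi_i^*$ is injective on rational cohomology by Leray--Hirsch. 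Now set
\[
F = \mathbb{P}_1 \times_B \mathbb{P}_2 \times_B \cdots \times_B \mathbb{P}_n,
\]
with $\rho : F \to B$ the projection; this is a fibre bundle with fibre $\prod_i \mathbb{CP}^{m_i - 1}$ (and a compact smooth manifold if $B$ is).

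Finally I would check (1) and (2). As $\rho$ factors through each $\mathbb{P}_i$ (project to the $i$-th factor, then down to $B$), iterating the Leray--Hirsch injections gives (1), and $\rho^*(o_i) = 0$ for every $i$. The pullback family $\rho^*(E) \to F$ is again a family of framed positive definite $4$-manifolds, and the map of fibrations $\rho^*(E) \to E$ over $\rho$ induces a map of Leray--Serre spectral sequences which is the identity of $H^2(X;\mathbb{Z})$ on $E_3^{0,2}$ and commutes with differentials; hence $d_3(\xi_i) = \rho^*(o_i) = 0$ in the spectral sequence of $\rho^*(E) \to F$. By the same dimension count as before, $\xi_i$ is then a permanent cycle there, so it lifts to a class $\zeta_i \in H^2(\rho^*(E);\mathbb{Z})$ restricting to $\xi_i$ on the fibres, which is (2). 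The key point — and the only input beyond routine spectral-sequence manipulation — is the combination of Proposition \ref{prop:degen} (which forces $o_i$ to be torsion) with the classical realisation of torsion $H^3$-classes by finite-dimensional $PU(m)$-bundles; together they are exactly what makes Lemma \ref{lem:splitting} applicable, and I expect that to be where the real content of the argument sits.
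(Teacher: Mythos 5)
Your proposal is correct and follows essentially the same route as the paper: identify $o_i = d_3(\xi_i) \in H^3(B;\mathbb{Z})$ as a torsion class via the rational degeneration of Proposition \ref{prop:degen}, realise each $o_i$ as the lifting obstruction of a $PU(m_i)$-bundle (Serre), kill it by pulling back to the associated projective bundle via Lemma \ref{lem:splitting}, and take the fibre product over $B$, with injectivity of $\rho^*$ from Leray--Hirsch. Your version is if anything slightly more careful in the spectral-sequence bookkeeping (checking that $o_i$ is the only obstruction to $\xi_i$ surviving to $E_\infty$), but the content is identical.
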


The point of this lemma is that the $\zeta_i$ are {\em integral} cohomology classes whereas the $x_i$ defined earlier are only rational.

\begin{proof}
Consider the Leray-Serre spectral sequence $E^{p,q}_r$ for $\pi : E \to B$. Note that $E^{p,q}_2 = E^{p,q}_3$. We have seen that $d_3(\xi_j)$ is zero rationally, but it need not be zero over $\mathbb{Z}$. Therefore $g_j = d_3(\xi_j) \in E_3^{3,0} = H^3(B ; \mathbb{Z})$ for $j=1, \dots , n$ are all torsion classes. By a result of Serre \cite{gro}, every torsion class in $H^3(B ; \mathbb{Z})$ is represented by the lifting obstruction for some principal $PU(m)$-bundle, where the rank $m$ is allowed to vary. Thus for $i = 1, \dots , n$ we can find an $m_i$ and a principal $PU(m_i)$-bundle $P_i \to B$ such that $g_i$ is the lifting obstruction for $P_i$. Let $\pi_i : E_i \to B$ be the associated $\mathbb{CP}^{m_i-1}$-bundle. By Lemma \ref{lem:splitting}, the pullback of $g_i$ to $E_i$ must vanish.

Let $F = E_1 \times_B E_2 \times_B \cdots \times_B E_n$ and let $\rho : F \to B$ be the projection. By induction on $n$ it is straightforward to see that $\rho^*  : H^*(B ; \mathbb{Q}) \to H^*(F ; \mathbb{Q})$ is injective. Moreover we have $\rho^*(g_i) = 0$ for all $i$. Hence the Leray-Serre spectral sequence for $\rho^*(E) \to B$ degenerates over $\mathbb{Z}$ at $E_2$. Hence for $i = 1, \dots , n$, there exist classes $\zeta_i \in H^2( \rho^*(E) ; \mathbb{Z})$ such that $\zeta_i$ restricted to the fibres of $\rho^*(E)$ equals $\xi_i$.
\end{proof}

\begin{theorem}\label{thm:sw}
Let $\pi : E \to B$ be a family with structure group $Diff_0(X)$. Then
\[
\int_{E/B} e^{( \epsilon_1 x_1 + \cdots + \epsilon_n x_n)/2} \hat{A}(T(E/B)) = 0
\]
for all $\epsilon_1 , \dots , \epsilon_n \in \{1, -1\}$.
\end{theorem}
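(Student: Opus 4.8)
The plan is to realise the class $\tfrac12(\epsilon_1 x_1+\cdots+\epsilon_n x_n)$, up to a class pulled back from the base, as half the first Chern class of an honest spin$^c$-structure on a suitably pulled-back family, and then to exploit that the resulting Dirac index has virtual rank $0$. Recall that for a family of spin$^c$ Dirac operators associated to $\mathfrak{s}_{E/B}$ with $c = c_1(\mathfrak{s}_{E/B})$, the Atiyah--Singer families index theorem gives $\mathrm{ch}(D) = \int_{E/B} e^{c/2}\hat A(T(E/B))$, whose degree-$0$ component is the virtual rank $d = (c^2-\sigma(X))/8$ (evaluated on a fibre, using the signature theorem). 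If $c$ restricts on each fibre to $\epsilon_1\xi_1+\cdots+\epsilon_n\xi_n$ with $\epsilon_i = \pm1$, then on the fibre $c^2 = n = \sigma(X)$, so $d = 0$; Theorem \ref{thm:fsw1} then forces $c_j(D) = 0$ for all $j\ge 1$, and since $D$ has virtual rank $0$ this gives $\mathrm{ch}(D) = 0$, i.e. $\int_{E/B} e^{c/2}\hat A(T(E/B)) = 0$. So everything reduces to producing such a $c$ from a genuine spin$^c$-structure.

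To obtain integral lifts of the $x_i$, I would first pass to the family $\rho^*E\to F$ of Lemma \ref{lem:splitting2}, on which there are integral classes $\zeta_i\in H^2(\rho^*E;\mathbb{Z})$ restricting to $\xi_i$ on fibres and for which $\rho^*$ is rationally injective. By the Wu formula, $w_2(X)$ is the unique mod-$2$ class pairing with each $\alpha\in H^2(X;\mathbb{Z}/2)$ as $\alpha^2$, so in the diagonal basis $w_2(X)$ is the reduction of $\xi_1+\cdots+\xi_n$; hence $w_2(T(\rho^*E/F))$ restricts to this on each fibre. Since $H^1(X;\mathbb{Z}/2) = 0$, a degree-$2$ mod-$2$ class on $\rho^*E$ restricting trivially to each fibre is pulled back from $F$, so $w_2(T(\rho^*E/F))$ equals the mod-$2$ reduction of $\zeta_1+\cdots+\zeta_n$ plus $\rho^*u$ for some $u\in H^2(F;\mathbb{Z}/2)$. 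Each $\zeta_i$ being integral, its mod-$2$ reduction has vanishing Bockstein, so the obstruction $W_3$ to a spin$^c$-structure on $T(\rho^*E/F)$ equals $\rho^*\beta(u)$, where $\beta\colon H^2(-;\mathbb{Z}/2)\to H^3(-;\mathbb{Z})$ is the Bockstein; in particular $\beta(u)$ is a $2$-torsion class on $F$.

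I would then invoke once more the device of Lemma \ref{lem:splitting2}: by Serre's theorem the torsion class $\beta(u)\in H^3(F;\mathbb{Z})$ is the lifting obstruction of some principal $PU(m)$-bundle, and pulling back along the associated $\mathbb{CP}^{m-1}$-bundle $q\colon F'\to F$ (which is rationally injective, by Leray--Hirsch, and kills $\beta(u)$) we arrange that $u$ admits an integral lift $\tilde u$ on $F'$. Writing $E'\to F'$ for the pulled-back family and again $\zeta_i$ for the pulled-back classes, the integral class
\[
c = \sum_{i=1}^n \epsilon_i\zeta_i + \tilde u \in H^2(E';\mathbb{Z})
\]
has mod-$2$ reduction equal to $w_2(T(E'/F'))$ (using that the $\epsilon_i$ are odd) and restricts to $\sum\epsilon_i\xi_i$ on fibres; hence $T(E'/F')$ carries a spin$^c$-structure with $c_1 = c$. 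Applying the first paragraph to this family gives $\int_{E'/F'} e^{c/2}\hat A(T(E'/F')) = 0$.

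Finally, writing $\tilde x_i$ for the pullback of $x_i$ to $E'$, the difference $\zeta_i-\tilde x_i$ restricts to $0$ on each fibre and so is pulled back from $F'$; thus $c = \sum\epsilon_i\tilde x_i + (\text{pullback of some }a\in H^2(F';\mathbb{Q}))$. By the projection formula a base class factors out of fibre integration, $e^{a/2}$ is invertible in $H^*(F';\mathbb{Q})$ since $a$ is nilpotent, and fibre integration commutes with the base change $p\colon F'\to B$, so
\[
0 = \int_{E'/F'} e^{c/2}\hat A(T(E'/F')) = e^{a/2}\cdot p^*\!\left(\int_{E/B} e^{(\epsilon_1 x_1+\cdots+\epsilon_n x_n)/2}\hat A(T(E/B))\right);
\]
since $p$ is rationally injective (being a composite of rationally injective maps), the theorem follows. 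The step I expect to be the main obstacle is the identification of $w_2$ of the \emph{vertical} tangent bundle over the whole total space, which is what forces the second pullback $F'\to F$: one must check that this pullback remains rationally injective and does not disturb the fibrewise restriction of the candidate $c$. The index-rank computation giving $d=0$, and the concluding descent via the projection formula, are routine by comparison.
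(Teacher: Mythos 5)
Your proof is correct and follows essentially the same route as the paper's: pull back along the fibre product of projective bundles from Lemma \ref{lem:splitting2} to get integral lifts $\zeta_i$ of the $\xi_i$, produce a spin$^c$-structure whose $c_1$ restricts fibrewise to $\sum\epsilon_i\xi_i$, note that the families index then has virtual rank $0$ so that Theorem \ref{thm:fsw1} forces $\mathrm{ch}(D)=0$, and descend via the projection formula and rational injectivity of the pullback. The one point where you go beyond the paper is the existence of the spin$^c$-structure: the paper asserts outright that the mod $2$ reduction of $\sum\epsilon_i\zeta_i$ equals $w_2(T(E/B))$, whereas a priori (via the Wu formula and the mod $2$ Serre spectral sequence) the two agree only up to $\pi^*(u)$ for some $u\in H^2(B;\mathbb{Z}/2)$, so that $W_3(T(E/B))=\pi^*\beta(u)$ does not obviously vanish; your second application of Serre's theorem and Lemma \ref{lem:splitting} to kill $\beta(u)$ addresses this cleanly and costs nothing, since the extra $\mathbb{CP}^{m-1}$-bundle is still rationally injective and does not change the fibrewise restrictions. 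This is a legitimate refinement of the same argument rather than a different one.
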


\begin{proof}
Let $\rho : F \to B$ be as in the statement of Lemma \ref{lem:splitting2}. Since $\rho^* : H^*(B ; \mathbb{Q}) \to H^*(F ; \mathbb{Q})$ is injective, to show that
\[
\int_{E/B} e^{( \epsilon_1 x_1 + \cdots + \epsilon_n x_n)/2} \hat{A}(T(E/B))
\]
is zero, it suffices to show that it pulls back to zero under $\rho$. Therefore we may restrict to families with the property that there exists classes $\zeta_1, \dots , \zeta_n \in H^2( E ; \mathbb{Z})$ such that $\zeta_i$ restricted to the fibres of $E$ equals $\xi_i$. Let
\[
c = \epsilon_1 \zeta_1 + \epsilon_1 \zeta_2 + \cdots + \epsilon_n \zeta_n \in H^2( E ; \mathbb{Z}).
\]
Then $c$ is a characteristic for $T(E/B)$ in the sense that the mod $2$ reduction of $c$ is $w_2( T(E/B))$. Therefore the third integral Stiefel-Whitney class of $T(E/B)$ vanishes and so $T(E/B)$ admits some spin$^c$-structure $\mathfrak{s}'$. Let $c' = c_1( \mathfrak{s}') \in H^2( E ; \mathbb{Z})$. Then since $c'$ is a characteristic for $T(E/B)$, we must have
\[
c' = \sum_{i=1}^n k_i \zeta_i + \pi^*(\eta)
\]
for some odd integers $k_1, \dots , k_n$ and some $\eta \in H^2(B ; \mathbb{Z})$. For each $i$, let $L_i \to E$ be the line bundle with $c_1(L_i) = \zeta_i$. The set of spin$^c$-structures for $T(E/B)$ is a torsor over the group of line bundles on $E$. So we may consider the spin$^c$-structure
\[
\mathfrak{s} = L_1^{a_1} \otimes L_2^{a_2} \otimes \cdots \otimes L_n^{a_n} \otimes \mathfrak{s'},
\]
where $a_i = (\epsilon_i-k_i)/2$. It follows that $c_1(\mathfrak{s}) = c + \pi^*(\eta)$. Now we apply Theorem \ref{thm:fsw1} to the family $E \to B$ equipped with the spin$^c$-structure $\mathfrak{s}$. Let $D \in K^0(B)$ be the families index of this spin$^c$-structure. Since 
\[
c_1(\mathfrak{s})|_X = (c + \pi^*(\eta))|_X = \epsilon_1 \xi_1 + \cdots + \epsilon_n \xi_n,
\]
we find (by the Atiyah-Singer index theorem) that the virtual rank of $D$ is given by
\[
d = \dfrac{ ( \epsilon_1 \xi_1 + \cdots + \epsilon_n \xi_n)^2 - n}{8} = \dfrac{n-n}{8} = 0.
\]
Therefore, Theorem \ref{thm:fsw1} says that $c_j(D) = 0 \in H^{2j}(B ; \mathbb{Q})$ for all $j > 0$. So $Ch(D) = 0$. Now by the families index theorem, we get that
\[
Ch(D) = 0 = \int_{E/B} e^{ c_1(\mathfrak{s})/2 } \hat{A}(T(E/B)) = 0.
\]
To finish, we observe that since $x_i|X = \xi_i = \zeta_i|X$, it follows that $\zeta_i = x_i + \pi^*( \eta_i)$ for some $\eta_i \in H^2( B ; \mathbb{Q})$. Therefore
\[
c_1(\mathfrak{s}) = \epsilon_1 x_1 + \epsilon_2 x_2 + \cdots \epsilon_n x_n + \pi^*( \eta + \epsilon_1\eta_1 + \cdots + \epsilon_n\eta_n)
\]
and hence
\begin{align*}
Ch(D) = 0 &= \int_{E/B} e^{( \epsilon_1 x_1 + \epsilon_2 x_2 + \cdots \epsilon_n x_n + \pi^*( \eta + \epsilon_1\eta_1 + \cdots + \epsilon_n\eta_n) )/2} \hat{A}(T(E/B)) \\
&= e^{\pi^*(\eta + \epsilon_1\eta_1 + \cdots + \epsilon_n\eta_n)/2} \int_{E/B} e^{( \epsilon_1 x_1 + \epsilon_2 x_2 + \cdots \epsilon_n x_n )/2} \hat{A}(T(E/B)).
\end{align*}
Multiplying through by $e^{-\pi^*(\eta + \epsilon_1\eta_1 + \cdots + \epsilon_n\eta_n)/2}$, we obtain the theorem.

\end{proof}

\begin{theorem}\label{thm:sw2}
Let $\pi : E \to B$ be a family with structure group $Diff_0(X)$. Fix $i \in \{1, \dots , n\}$ and for each $j \neq i$, $1 \le j \le n$, let $\epsilon_j \in \{1,-1\}$ be given. Set
\[
c = 3x_i + \sum_{j | j \neq i } \epsilon_j x_j.
\]
Then
\[
\int_{E/B} e^{c/2} \hat{A}(T(E/B)) = e^{u},
\]
where $u \in H^2(B ; \mathbb{Q})$ is given by
\[
u = \int_{E/B} \dfrac{1}{48}c^3 - \dfrac{1}{48} p_1 c.
\]
\end{theorem}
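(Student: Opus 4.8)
The plan is to rerun the argument of Theorem~\ref{thm:sw} with the characteristic element $c = 3x_i + \sum_{j\neq i}\epsilon_j x_j$ in place of $\sum_j \epsilon_j x_j$, and then to pin down the resulting exponent by a short degree count.

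\emph{Reduction and construction of a spin$^c$-structure.} Exactly as at the start of the proof of Theorem~\ref{thm:sw}, I would invoke Lemma~\ref{lem:splitting2} to reduce to the case where there exist integral classes $\zeta_1,\dots,\zeta_n \in H^2(E;\mathbb{Z})$ with $\zeta_l|_{X_b} = \xi_l(b)$; this is legitimate since both sides of the claimed identity are natural under pullback and $\rho^*$ is injective on rational cohomology. In this situation set $\tilde{c} = 3\zeta_i + \sum_{j\neq i}\epsilon_j\zeta_j \in H^2(E;\mathbb{Z})$. Since $3\equiv 1 \pmod 2$, the mod-$2$ reduction of $\tilde{c}$ agrees with that of $\zeta_i + \sum_{j\neq i}\epsilon_j\zeta_j$, which by the argument in Theorem~\ref{thm:sw} equals $w_2(T(E/B))$; hence $T(E/B)$ admits a spin$^c$-structure. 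Twisting by a suitable product $\bigotimes_l L_l^{a_l}$ of the line bundles $L_l$ with $c_1(L_l)=\zeta_l$ (the exponents $a_l$ being integers because $3$ and every $\epsilon_j$ are odd) produces a spin$^c$-structure $\mathfrak{s}$ on $T(E/B)$ with $c_1(\mathfrak{s}) = \tilde{c} + \pi^*(\eta)$ for some $\eta \in H^2(B;\mathbb{Z})$, just as in Theorem~\ref{thm:sw}.

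\emph{Applying families Bauer--Furuta.} Restricted to a fibre, $c_1(\mathfrak{s})$ has square $9 + (n-1) = n+8$, so the families index $D \in K^0(B)$ of $\mathfrak{s}$ has virtual rank $d = (n+8-n)/8 = 1$. Theorem~\ref{thm:fsw1} then gives $c_j(D) = 0$ for all $j \ge 2$, so $Ch(D) = \sum_{k\ge 0} c_1(D)^k/k! = e^{c_1(D)}$; this is the only place where the input from Theorem~\ref{thm:sw} (there $Ch(D)=0$, virtual rank $0$) is replaced by something nontrivial. Feeding this into the families index theorem as in Theorem~\ref{thm:sw},
\[
e^{c_1(D)} = Ch(D) = \int_{E/B} e^{c_1(\mathfrak{s})/2}\,\hat{A}(T(E/B)).
\]
Writing $\zeta_l = x_l + \pi^*(\eta_l)$ with $\eta_l\in H^2(B;\mathbb{Q})$ (possible since $\zeta_l - x_l$ restricts to $0$ on fibres), one has $c_1(\mathfrak{s})/2 = c/2 + \pi^*(\lambda)$ for a single class $\lambda \in H^2(B;\mathbb{Q})$; pulling $e^{\pi^*(\lambda)}$ out of the fibre integral gives
\[
\int_{E/B} e^{c/2}\,\hat{A}(T(E/B)) = e^{\,c_1(D)-\lambda}.
\]
Thus the left-hand side is the exponential of the single degree-$2$ class $w := c_1(D)-\lambda \in H^2(B;\mathbb{Q})$.

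\emph{Identifying the exponent.} It remains to show $w = u$, which I would do by comparing components of fixed cohomological degree on the two sides of the last display. In degree $2$ the right-hand side $e^w$ contributes $w$, while the left-hand side contributes $\int_{E/B}$ of the degree-$6$ part of $e^{c/2}\hat{A}(T(E/B))$, which using $\hat{A} = 1 - p_1/24 + \cdots$ equals $\int_{E/B}\big(\tfrac{1}{48}c^3 - \tfrac{1}{48}p_1 c\big) = u$; hence $w=u$. (In degree $0$ the identity reads $1 = \int_{E/B}(c^2/8 - p_1/24) = (n+8)/8 - 3n/24 = 1$, a consistency check using $\int_{E/B}c^2 = n+8$ and $\int_{E/B}p_1 = 3n$.) The essential input is Theorem~\ref{thm:fsw1}: once one knows that an index-$1$ spin$^c$ family has $c_j(D)=0$ for $j\ge 2$, the remaining work is routine bookkeeping with characteristic elements plus the short degree count above, so I do not anticipate a serious obstacle beyond what is already established.
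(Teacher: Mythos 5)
Your proposal is correct and follows essentially the same route as the paper: reduce via Lemma \ref{lem:splitting2}, build a spin$^c$-structure with $c_1(\mathfrak{s}) = c + \pi^*(\eta)$, compute the virtual rank $d=1$, apply Theorem \ref{thm:fsw1} together with the Newton identities to get $Ch(D)=e^{c_1(D)}$, invoke the families index theorem, and identify the exponent by equating degree-$2$ components. The added degree-$0$ consistency check is a nice touch but not needed.
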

\begin{proof}
As in the proof of Theorem \ref{thm:sw}, it suffices to prove the result for families with the property that there exists classes $\zeta_1, \dots , \zeta_n \in H^2( E ; \mathbb{Z})$ such that $\zeta_i$ restricted to the fibres of $E$ equals $\xi_i$. Let
\[
c = 3x_i + \sum_{j | j \neq i } \epsilon_j x_j.
\]
Arguing as in the proof of Theorem \ref{thm:sw}, we have that there exists a spin$^c$ structure $\mathfrak{s}$ such that $c_1(\mathfrak{s}) = c + \pi^*(\eta)$ for some $\eta \in H^2(B ; \mathbb{Q})$. We apply Theorem \ref{thm:fsw1} to the family $E \to B$ equipped with the spin$^c$-structure $\mathfrak{s}$. Let $D \in K^0(B)$ be the families index of this spin$^c$-structure. Since 
\[
c_1(\mathfrak{s})|_X = 3\xi_1 + \sum_{j | j \neq i} \epsilon_j \xi_j,
\]
we find that the virtual rank of $D$ is given by
\[
d = \dfrac{ ( 3\xi_i + \sum_{j | j \neq i} \epsilon_j \xi_j)^2 - n}{8} = \dfrac{9 + (n-1)-n}{8} = \dfrac{8}{8} = 1.
\]
Therefore, \ref{thm:fsw1} says that $c_j(D) = 0 \in H^{2j}(B ; \mathbb{Q})$ for all $j > 1$. Using the Newton identities and the fact that $D$ has virtual rank $1$, we find that
\[
Ch(D) = e^{c_1(D)}.
\]
Now by the families index theorem, we get that
\[
Ch(D) = e^{c_1(D)} = \int_{E/B} e^{ c_1(\mathfrak{s})/2 } \hat{A}(T(E/B)) = \int_{E/B} e^{(c + \pi^*(\eta))/2} \hat{A}( T(E/B) ).
\]
Therefore
\begin{equation}\label{equ:int1}
\int_{E/B} e^{c/2} \hat{A}(T(E/B) ) = e^u,
\end{equation}
where $u = c_1(D) - \pi^*(\eta)/2$. Equating degree $2$ components in Equation (\ref{equ:int1}), we find that
\[
u = \int_{E/B} \dfrac{1}{48}c^3 - \dfrac{1}{48} p_1 c.
\]
\end{proof}

\section{Cohomology rings of families}\label{sec:cohomring}

In this section we apply Theorems \ref{thm:sw} \ref{thm:sw2} to obtain constrains on the structure of the cohomology ring $H^*(E ; \mathbb{Q})$ and on the Characteristic classes $p_1,e$. These constrains will then be used in Section \ref{sec:tautclasses} to deduce various properties of the tautological classes of $E$.

\begin{proposition}\label{prop:ident0}
Let $\pi : E \to B$ be a family with structure group $Diff_0(X)$. The following identities hold:

\begin{align}
0 &= \int_{E/B} x_i x_j x_i \; \; (\text{for distinct } i,j,k), \label{equ:sw1} \\
0 &= \int_{E/B} \left(x_i^3 + 3 \sum_{j | j \neq i} x_i x_j^2 - p_1 x_i \right) (\text{for each } i), \label{equ:sw2} \\
0 &= \int_{E/B} \left(x_i x_j^3 + x_i^3 x_j + 3 \sum_{k| k \neq i,j} x_i x_j x_k^2 - p_1 x_i x_j \right) \; \; (\text{for distinct } i,j),  \label{equ:sw3}\\
0 &= \int_{E/B} \left( \sum_i x_i^4 + 6 \sum_{i,j | i < j} x_i^2 x_j^2 + 3e^2 - 2p_1 \sum_i x_i^2\right). \label{equ:sw4}
\end{align}

\end{proposition}
\begin{proof}
Theorem \ref{thm:sw} gives
\[
\int_{E/B} e^{( \epsilon_1 x_1 + \cdots + \epsilon_n x_n)/2} \hat{A}(T(E/B)) = 0.
\]
Expanding out the exponential and integrating, we see that the degree $2m$ component of the left hand side has the form
\[
\sum_{ |I| \le m+2} \epsilon_I \alpha_{m,I}
\]
for some cohomology classes $\alpha_{m,I} \in H^{2m}(B ; \mathbb{Q})$, where the sum is over subsets of $\{1,2, \dots , n\}$ of size $\le m+2$ and
\[
\epsilon_{I} = \prod_{i \in I} \epsilon_{i}.
\]
Each $\epsilon_I$ can be thought of as a function $\epsilon_I : \{ 1 , -1 \}^n \to \mathbb{Q}$. Thought of this way, the $\{ \epsilon_I \}_{I}$ are linearly independent. Indeed they are the characters of $\mathbb{Z}_2^n,$ namely $\epsilon_I$ is the character of the $1$-dimensional representation in which the $i$-th generator of $\mathbb{Z}_2^n$ acts as $-1$ if $i \in I$ and as $+1$ if $i \notin I$. By linear independence, of the $\epsilon_I$, it follows that if
\[
\int_{E/B} e^{( \epsilon_1 x_1 + \cdots + \epsilon_n x_n)/2} \hat{A}(T(E/B)) = 0
\]
for all $\epsilon_1 , \dots , \epsilon_n \in \{1,-1\}$, then each class $\alpha_{m,I}$ must be zero. In degree $2m = 2$, we get
\begin{equation*}
0 = \int_{E/B} x_i x_j x_i \; \; (\text{for distinct } i,j,k),
\end{equation*}
which comes from $\alpha_{1, \{i,j,k\} } = 0$ and
\begin{equation*}
0 = \int_{E/B} \left(x_i^3 + 3 \sum_{j | j \neq i} x_i x_j^2 - p_1 x_i \right) (\text{for each } i),
\end{equation*}
which comes from $\alpha_{1, \{ i \} } = 0$. Notice that $\alpha_{1, \emptyset} = 0$ and $\alpha_{1 , \{ i,j\} } = 0$ hold automatically. In general it is clear that $\alpha_{m , I }=0$ holds automatically whenever $|I| \neq m \; ({\rm mod} \; 2)$. In degree $2m=4$, we get
\begin{align*}
0 &= \int_{E/B} \left(x_i x_j^3 + x_i^3 x_j + 3 \sum_{k| k \neq i,j} x_i x_j x_k^2 - p_1 x_i x_j \right) \; \; (\text{for distinct } i,j), \text{ and} \\
0 &= \int_{E/B} \left( \sum_i x_i^4 + 6 \sum_{i,j | i < j} x_i^2 x_j^2 + 3e^2 - 2p_1 \sum_i x_i^2\right), 
\end{align*}
which come from $\alpha_{2 , \{i,j\} } = 0$ and $\alpha_{2 , \emptyset } = 0$ respectively. 
\end{proof}
There is also an equation corresponding to $\alpha_{2, \{i,j,k,l\} } = 0$, but it turns out that this follows from (\ref{equ:sw1}), so doesn't give any further constraints. 

Recall that by Proposition \ref{prop:ident1}, the cohomology ring of $E$ is given by Equations (\ref{equ:21})-(\ref{equ:24}). We now use Proposition \ref{prop:ident0} to deduce further simplifications.
\begin{proposition}\label{prop:ring}
We have that
\[
D_{ij}^k = 0 \text{ whenever } k \neq i \text{ or } j.
\]
Henceforth we shall denote $D_{ij}^i$ by $D_{ij}$ (note that $D_{ij}^j = D_{ji}^j = D_{ji}$). Moreover, we have: 
\[
E_{ij} = -D_{ij}D_{ji} \text{ for all } i \neq j.
\]
Hence the cup product on $H^*(E ; \mathbb{Q})$ has the form
\begin{align}
x_i x_j &= D_{ij}x_i + D_{ji}x_j -D_{ij}D_{ji}, \label{equ:45} \\
x_i^2 & = \nu + \sum_{j | j\neq i} D_{ij} x_j + G_i, \label{equ:46} \\
x_i \nu &= G_i x_i - \sum_{j | j \neq i} D_{ij} D_{ji} x_j + J_i, \label{equ:47} \\
\nu^2 &= \sum_j J_j x_j + \omega. \label{equ:48}
\end{align}
\end{proposition}

\begin{proof}
Since $x_i x_j = \sum_{k} D_{ij}^k x_k + E_{ij}$, Equation (\ref{equ:sw1}) implies that
\[
D_{ij}^k = 0 \text{ whenever } k \neq i \text{ or } j.
\]
As stated in the proposition, we will henceforth denote $D_{ij}^i$ simply by $D_{ij}$. Next consider $p_1 \in H^4(E ; \mathbb{Q})$. Since $\int_{E/B} p_1 = 3n$, we may write $p_1$ in the form
\begin{equation}\label{equ:p10}
p_1 = 3n \nu + \sum_i \lambda_i x_i + \tau
\end{equation}
for some $\lambda_1, \dots \lambda_n \in H^2(B ; \mathbb{Q})$ and $\tau \in H^4(B ; \mathbb{Q})$. From (\ref{equ:sw2}) one finds
\begin{equation}\label{equ:lambda0}
\lambda_i = 3 \sum_{j | j \neq i} D_{ji}.
\end{equation}
Next, we note that (by Equation (\ref{equ:21}))
\[
\int_{E/B} x_i^3 x_j = \int_{E/B} x_i^2( D_{ij}x_i + D_{ji}x_j + E_{ij} ) = D_{ij}D_{ji} + E_{ij},
\]
\[
\int_{E/B} x_i x_j x_k^2 = \int_{E/B} (D_{ij}x_i + D_{ji}x_j + E_{ij})x_k^2 = D_{ij}D_{ki} + D_{ji}D_{kj} + E_{ij}
\]
for $i,j,k$ distinct, and
\begin{align*}
\int_{E/B} p_1 x_i x_j &= \int_{E/B} (3n\nu + \sum_{k} \lambda_k x_k + \tau )x_i x_j \text{ (by Equation (\ref{equ:p10}))} \\
&= 3nE_{ij} + D_{ij}\lambda_i + D_{ji}\lambda_j \text{ (by Equation (\ref{equ:21}))} \\
&= 3nE_{ij} + 6D_{ij}D_{ji} + 3\sum_{k|k \neq i,j} (D_{ij}D_{ki} + D_{ji}D_{kj}) \text{ (by Equation (\ref{equ:lambda0}))}
\end{align*}
Hence (\ref{equ:sw3}) gives:
\[
2( D_{ij}D_{ji} + E_{ij}) + 3\sum_{k|k \neq i,j} ( D_{ij}D_{ki} + D_{ji}D_{kj} + E_{ij}) - 3nE_{ij} - 6D_{ij}D_{ji} - 3\sum_{k|k \neq i,j} (D_{ij}D_{ki} + D_{ji}D_{kj}) = 0,
\]
which simplifies to
\[
2( D_{ij}D_{ji} + E_{ij}) + 3(n-2)E_{ij} - 3nE_{ij} - 6D_{ij}D_{ji}  = 0,
\]
or
\[
-4( D_{ij}D_{ji} + E_{ij}) = 0.
\]
So we have
\[
E_{ij} = -D_{ij}D_{ji}.
\]
\end{proof}

A surprising consequence of Proposition \ref{prop:ring} is that the equation for $x_i x_j$,
\[
x_i x_j = D_{ij}x_i + D_{ji}x_j - D_{ij}D_{ji},
\]
can be written more compactly as
\[
(x_i - D_{ji})(x_j - D_{ij}) = 0.
\]

From Proposition \ref{prop:ring}, the cup product on $H^*(E ; \mathbb{Q})$ is determined by classes $D_{ij}, G_i, J_i, \omega$. However there are certain constraints that these classes must satisfy arising from associativity of the cup product.

\begin{proposition}
The classes $D_{ij}$, $G_i$, $J_i$ and $\omega$ satisfy the following equations:
\begin{align}
(D_{ij}-D_{kj})(D_{ik}-D_{jk}) &= 0 \text{ (for distinct } i,j,k), \label{equ:assoc1} \\
G_i + G_j + \sum_{k| k \neq i,j} D_{ik}D_{jk} &= D_{ij}^2 + D_{ji}^2 \text{ (for distinct } i,j), \label{equ:assoc2} \\
J_j + D_{ij}G_j - \sum_{k | k \neq i,j} D_{ik} D_{jk}D_{kj} &= D_{ij}G_i -D_{ij} D_{ji}^2 \text{ (for distinct } i,j), \label{equ:assoc3} \\
\sum_{j| j \neq i} J_j D_{ij} + \omega &= G_i^2 + \sum_{j| j \neq i} D_{ij}^2 D_{ji}^2 \text{ (for all } i). \label{equ:assoc4}
\end{align}
\end{proposition}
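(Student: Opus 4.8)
The plan is to read off the four identities directly from associativity of the cup product on $H^*(E ; \mathbb{Q})$, using the multiplication table established in Proposition \ref{prop:ring}. Associativity itself is automatic, since $H^*(E ; \mathbb{Q})$ is a ring; the content is to translate it into polynomial relations among the structure constants $D_{ij}, G_i, J_i, \omega$. Because $1, x_1, \dots, x_n, \nu$ is an $H^*(B ; \mathbb{Q})$-module basis and the product is $H^*(B ; \mathbb{Q})$-bilinear, it suffices to impose $(ab)c = a(bc)$ on triples of these basis elements and to compare the coefficient of each basis element on the two sides. Each of the four displayed identities comes from one well-chosen triple and one well-chosen coefficient.

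For (\ref{equ:assoc1}) I take three distinct indices $i,j,k$ and expand $(x_i x_j) x_k$ and $x_i (x_j x_k)$ using Proposition \ref{prop:ring}; comparing the coefficient of $x_i$ on the two sides gives $D_{ij} D_{ik} = D_{jk} D_{ij} + D_{kj} D_{ik} - D_{jk} D_{kj}$, which rearranges to $(D_{ij} - D_{kj})(D_{ik} - D_{jk}) = 0$. For (\ref{equ:assoc2}) and (\ref{equ:assoc3}) I fix $i \neq j$ and expand both sides of $(x_i x_i) x_j = x_i (x_i x_j)$; here one substitutes $x_i^2 = \nu + \sum_{k \neq i} D_{ik} x_k + G_i$, and must note that the $k = j$ term of the resulting sum $\sum_{k \neq i} D_{ik}(x_k x_j)$ is $D_{ij} x_j^2$, which must itself be expanded, contributing both a $\nu$-term and a $G_j$-term. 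Comparing the coefficient of $x_j$ then yields (\ref{equ:assoc2}), and comparing the coefficient of $1$ yields (\ref{equ:assoc3}). Finally, for (\ref{equ:assoc4}) I expand both sides of $(x_i x_i)\nu = x_i (x_i \nu)$: the coefficient of $1$ on the left is $\omega + \sum_{j \neq i} D_{ij} J_j$, coming from $\nu^2$ and from the $J_j$-terms of the $x_j \nu$, while on the right it is $G_i^2 + \sum_{j \neq i} D_{ij}^2 D_{ji}^2$, coming from the constant term of $G_i x_i^2$ and the constant terms $-D_{ij} D_{ji}$ of the products $x_i x_j$; equating the two gives (\ref{equ:assoc4}).

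Since every step is elementary algebra in a free module, the main obstacle is purely one of bookkeeping: one must track which summation ranges exclude which indices — in particular the $k=j$ contributions to sums over $k \neq i$, which feed back $\nu$ and $G_j$ — and one must keep $D_{ij}D_{ji}$ and $D_{ij}^2$ distinct throughout, since the matrix $(D_{ij})$ is not symmetric. I would also add a sentence noting that each triple considered yields relations from its other coefficients as well, but that a short inspection shows these are either identically satisfied or are consequences of the four identities above together with their permutations in the indices, so that nothing further needs to be recorded.
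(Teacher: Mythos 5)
Your proposal is correct and follows essentially the same route as the paper: all four identities are read off from associativity of the cup product applied to triples of the basis elements $1, x_1, \dots, x_n, \nu$, using the multiplication table of Proposition \ref{prop:ring}, and your coefficient extractions (including the $k=j$ feedback of $\nu$ and $G_j$ into the sums) check out. The only immaterial difference is bookkeeping: the paper obtains (\ref{equ:assoc3}) from $(x_i^2)\nu = x_i(x_i\nu)$ and (\ref{equ:assoc4}) from $x_i(\nu^2) = (x_i\nu)\nu$, whereas you obtain (\ref{equ:assoc3}) from the constant term of $(x_i^2)x_j = x_i(x_i x_j)$ and (\ref{equ:assoc4}) from the constant term of $(x_i^2)\nu = x_i(x_i\nu)$; both choices yield the stated relations.
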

\begin{proof}
Recall the cup product on $H^*(E ; \mathbb{Q})$ is given in Proposition \ref{prop:ring}. Associativity of this product gives contraints on $D_{ij}, G_i, J_i, \omega$. Let $i,j,k$ be distinct. From
\[
(x_i x_j) x_k = x_i (x_j x_k)
\]
we obtain Equation (\ref{equ:assoc1}). From
\[
(x_i^2)x_j = x_i ( x_i x_j)
\]
we obtain Equation (\ref{equ:assoc2}). From
\[
(x_i^2) \nu = x_i( x_i \nu)
\]
we obtain Equation (\ref{equ:assoc3}) and from
\[
x_i (\nu^2) = (x_i \nu) \nu
\]
we obtain Equation (\ref{equ:assoc4}). 
\end{proof}

Conversely, it can be checked that Equations (\ref{equ:assoc1}) - (\ref{equ:assoc4}) imply associativity of the product given in Proposition \ref{prop:ring}, so there are no further equations that can be obtained from associativity alone.

\begin{proposition}\label{prop:xcubed}
For each $i=1, \dots , n$, we have
\[
x_i^3 = B_i x_i + C_i,
\]
where
\begin{align}
B_i &= 2G_i + \sum_{j | j \neq i } D_{ij}^2, \label{equ:bg} \\
C_i &= J_i - \sum_{j | j \neq i} D_{ij}^2 D_{ji}. \label{equ:cj}
\end{align}
\end{proposition}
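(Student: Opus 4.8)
The plan is to compute $x_i^3$ directly by multiplying $x_i$ with the expression for $x_i^2$ given in Proposition \ref{prop:ring}, and then repeatedly applying the multiplication rules of that same proposition to reduce everything to the $H^*(B;\mathbb{Q})$-module basis $1, x_1, \dots, x_n, \nu$. Since $1, x_1, \dots, x_n, \nu$ is a basis, the resulting expression is unique, so it suffices to carry out the substitution carefully and collect terms.

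Concretely, I would start from $x_i^3 = x_i \cdot x_i^2 = x_i\bigl(\nu + \sum_{j\mid j\neq i} D_{ij} x_j + G_i\bigr)$, which expands to $x_i\nu + \sum_{j\mid j\neq i} D_{ij}\, x_ix_j + G_i x_i$. Now substitute $x_i\nu = G_i x_i - \sum_{j\mid j\neq i} D_{ij}D_{ji} x_j + J_i$ and, for each $j\neq i$, $x_ix_j = D_{ij}x_i + D_{ji}x_j - D_{ij}D_{ji}$. The latter substitution contributes $\sum_{j\mid j\neq i} D_{ij}^2 x_i + \sum_{j\mid j\neq i} D_{ij}D_{ji} x_j - \sum_{j\mid j\neq i} D_{ij}^2 D_{ji}$. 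Adding the three pieces, the terms $\pm \sum_{j\mid j\neq i} D_{ij}D_{ji}x_j$ cancel, the coefficient of $x_i$ becomes $2G_i + \sum_{j\mid j\neq i} D_{ij}^2 = B_i$, and the constant term becomes $J_i - \sum_{j\mid j\neq i} D_{ij}^2 D_{ji} = C_i$. In particular there is no $\nu$ term remaining and no $x_j$ term with $j\neq i$.

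There is essentially no obstacle here: the result is a short formal computation in the ring described by Proposition \ref{prop:ring}, and the associativity constraints (\ref{equ:assoc1})--(\ref{equ:assoc4}) are not even needed, since we only substitute along one bracketing of the triple product. One could, if desired, double-check consistency by also computing $x_i^3$ via $x_i^3 = (x_i^2)x_i$ in a different grouping or by verifying that $\int_{E/B} x_i^3 = 0$ forces $C_i$ to have the stated form together with the identity $J_i = K_i$ and $G_i = I_{ii}$ from Proposition \ref{prop:ident1}; but this is merely a sanity check and not required for the proof.
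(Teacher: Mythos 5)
Your computation is correct and is essentially identical to the paper's own proof: both expand $x_i^3 = x_i(\nu + \sum_{j\neq i} D_{ij}x_j + G_i)$ using the product formulas of Proposition \ref{prop:ring}, observe the cancellation of the $D_{ij}D_{ji}x_j$ terms, and collect coefficients to obtain $B_i x_i + C_i$. Nothing further is needed.
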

\begin{proof}
We have
\[
x_i^2 = \nu + \sum_{j | j \neq i } D_{ij} x_j + G_i.
\]
Multiplying both sides by $x_i$ and using $x_i \nu = G_i x_i - \sum_{j | j \neq i} D_{ij}D_{ji} x_j + J_i$, we get
\begin{align*}
x_i^3 &= x_i \nu + \sum_{j | j \neq i} D_{ij}(x_ix_j) + G_i x_i \\
&= 2G_i x_i - \sum_{j | j \neq i } D_{ij} D_{ji} x_j + J_i + \sum_{j | j \neq i} D_{ij}( D_{ij}x_i + D_{ji} x_j - D_{ij}D_{ji} ) \\
&= (2G_i + \sum_{j | j \neq i} D_{ij}^2 )x_i + J_i - \sum_{j | j \neq i} D_{ij}^2 D_{ji} \\
&= B_i x_i + C_i. 
\end{align*}

\end{proof}

\begin{proposition}\label{prop:dcubic}
For distinct $i,j$, we have
\[
D_{ji}^3 = B_i D_{ji} + C_i.
\]
In other words, $D_{ji}$ satisfies the same cubic equation as $x_i$.
\end{proposition}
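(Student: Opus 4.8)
The plan is to exploit the factored form of the cup product relation recorded just after Proposition \ref{prop:ring}, namely $(x_i - D_{ji})(x_j - D_{ij}) = 0$ for $i \neq j$, together with the cubic relation $x_i^3 = B_i x_i + C_i$ from Proposition \ref{prop:xcubed}. The conceptual point is that $D_{ji}$ should be thought of as the ``value'' of $x_i$ after imposing $x_j = D_{ij}$, so it ought to satisfy whatever polynomial relation $x_i$ does.

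First I would record the key commutation identity: for distinct $i,j$, using only $x_i x_j = D_{ij}x_i + D_{ji}x_j - D_{ij}D_{ji}$ from Proposition \ref{prop:ring},
\[
(x_j - D_{ij})\, x_i = x_i x_j - D_{ij} x_i = D_{ji} x_j - D_{ij} D_{ji} = D_{ji}(x_j - D_{ij}).
\]
Iterating, $(x_j - D_{ij})\, x_i^{k} = D_{ji}^{k}(x_j - D_{ij})$ for all $k \ge 0$; in particular $(x_j - D_{ij})\, x_i^{3} = D_{ji}^{3}(x_j - D_{ij})$.

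Next, multiply the identity $x_i^3 - B_i x_i - C_i = 0$ of Proposition \ref{prop:xcubed} by $(x_j - D_{ij})$ and apply the commutation identity term by term (noting $B_i, C_i$ are pulled back from $B$, hence central and commuting past $x_i$ trivially):
\[
0 = (x_j - D_{ij})\bigl(x_i^3 - B_i x_i - C_i\bigr) = \bigl(D_{ji}^{3} - B_i D_{ji} - C_i\bigr)(x_j - D_{ij}).
\]
Set $\alpha := D_{ji}^{3} - B_i D_{ji} - C_i \in H^6(B;\mathbb{Q})$. The displayed equation reads $\alpha\, x_j - \alpha D_{ij} = 0$ in $H^8(E;\mathbb{Q})$. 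Since $1, x_1, \dots, x_n, \nu$ is a free $H^*(B;\mathbb{Q})$-module basis of $H^*(E;\mathbb{Q})$, the coefficient of the basis element $x_j$ must vanish, so $\alpha = 0$. (Equivalently, apply $\int_{E/B}(\,\cdot\, x_j)$ and use $\int_{E/B} x_j^2 = 1$ and $\int_{E/B} x_j = 0$ to conclude $\alpha = 0$.) This gives $D_{ji}^3 = B_i D_{ji} + C_i$.

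There is no real obstacle in this argument; the only point requiring a word of care is that $x_j - D_{ij}$ is not a zero divisor against classes pulled back from $B$, which is immediate from the freeness of $H^*(E;\mathbb{Q})$ over $H^*(B;\mathbb{Q})$ established earlier.
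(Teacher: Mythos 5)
Your proof is correct and follows essentially the same route as the paper: both establish the commutation identity $x_i(x_j - D_{ij}) = D_{ji}(x_j - D_{ij})$, iterate it, apply the cubic relation $x_i^3 = B_i x_i + C_i$, and extract the conclusion (the paper multiplies by $x_j$ and fibre-integrates, which is the parenthetical alternative you already note). No issues.
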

\begin{proof}
Recall that $x_i x_j = D_{ij}x_i + D_{ji} x_j - D_{ij}D_{ji}$. Therefore
\[
x_i ( x_j - D_{ij}) = D_{ji} ( x_j - D_{ij}).
\]
By repeated application of this identity we see that $x_i^k ( x_j - D_{ij}) = D_{ji}^k(x_j - D_{ij})$ for any $k \ge 0$. Using this and Proposition \ref{prop:xcubed}, we have
\[
D_{ji}^3( x_j - D_{ij}) = x_{ij}^3(x_j - D_{ij}) = ( B_i x_i + C_i) (x_j-D_{ij}) = (B_i D_{ji} + C_i) ( x_j-D_{ij}).
\]
Multiplying both sides by $x_j$ and integrating over the fibres gives
\[
D_{ji}^3 = B_i D_{ji} + C_i.
\]
\end{proof}

In order to compute tautological classes $E$, we need to determine $p_1, e$ as elements of $H^4(E ; \mathbb{Q})$. This is carried out in the next few propositions. First we consider $p_1$.

\begin{proposition}\label{prop:p1}
There exists a class $\mu \in H^4(B ; \mathbb{Q})$ such that
\[
p_1 = 3(x_1^2 + \cdots + x_n^2) + \mu.
\]
\end{proposition}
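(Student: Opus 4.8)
The plan is to compute $p_1 \in H^4(E;\mathbb{Q})$ directly in terms of the $H^*(B;\mathbb{Q})$-module basis $1, x_1, \dots, x_n, \nu$ and to extract the coefficients using the fibre integration formulas (1)--(8) together with the Seiberg--Witten constraints (\ref{equ:sw2}) and the cup product structure from Proposition \ref{prop:ring}. Since $\int_{E/B} p_1 = 3n$ and $\int_{E/B}\nu = 1$, write
\[
p_1 = 3n\,\nu + \sum_{i=1}^n \lambda_i x_i + \tau
\]
for some $\lambda_i \in H^2(B;\mathbb{Q})$ and $\tau \in H^4(B;\mathbb{Q})$; this is exactly the expansion already used in the proof of Proposition \ref{prop:ring}, where it was shown that $\lambda_i = 3\sum_{j\mid j\neq i} D_{ji}$. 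So the work reduces to identifying $\tau$ and then checking that the proposed expression $3(x_1^2 + \cdots + x_n^2) + \mu$ has the same $\nu$- and $x_i$-coefficients.

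First I would expand $3\sum_i x_i^2$ in the module basis using the formula $x_i^2 = \nu + \sum_{j\mid j\neq i} D_{ij}x_j + G_i$ from Proposition \ref{prop:ring}. Summing over $i$, the $\nu$-coefficient is $3n$, matching $p_1$. The coefficient of $x_k$ in $3\sum_i x_i^2$ is $3\sum_{i\mid i\neq k} D_{ik} = 3\sum_{i\mid i\neq k} D_{ik}$; I need this to equal $\lambda_k = 3\sum_{j\mid j\neq k} D_{jk}$, which it does (both sums run over all indices $\neq k$, with $D_{jk}$ meaning $D_{jk}^j$). Hence $p_1 - 3\sum_i x_i^2$ has vanishing $\nu$-coefficient and vanishing $x_i$-coefficients, so it is $\pi^*(\mu)$ for a unique $\mu = \tau - 3\sum_i G_i \in H^4(B;\mathbb{Q})$, which proves the claim.

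The one point that needs care — and the only real obstacle — is verifying that the $x_k$-coefficient of $p_1$ really is $\lambda_k = 3\sum_{j\mid j\neq k} D_{jk}$; this is where Equation (\ref{equ:sw2}) enters. One computes $\int_{E/B} p_1 x_k$ two ways: directly from the expansion $p_1 = 3n\nu + \sum_i \lambda_i x_i + \tau$ using (1)--(8), which gives $\lambda_k$, and via the identity $\int_{E/B}\big(x_k^3 + 3\sum_{j\mid j\neq k} x_k x_j^2 - p_1 x_k\big) = 0$ from (\ref{equ:sw2}), using $\int_{E/B} x_k^3 = 0$ and $\int_{E/B} x_k x_j^2 = D_{jk}$ (which follows from $x_j^2 = \nu + \sum_{l\mid l\neq j} D_{jl} x_l + G_j$ and the integration formulas). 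This gives $\lambda_k = 3\sum_{j\mid j\neq k} D_{jk}$, exactly matching the $x_k$-coefficient of $3\sum_i x_i^2$ computed above. Everything else is a routine bookkeeping of coefficients in the free module $H^*(E;\mathbb{Q})$, and uniqueness of $\mu$ is immediate since $\pi^*$ is injective (the spectral sequence degenerates, Proposition \ref{prop:degen}).
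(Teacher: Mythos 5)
Your proof is correct and follows essentially the same route as the paper: expand $p_1$ in the free $H^*(B;\mathbb{Q})$-module basis $1,x_1,\dots,x_n,\nu$, match the $\nu$-coefficient ($3n$) via the signature theorem, and use Equation (\ref{equ:sw2}) to pin down the $x_k$-coefficients as $3\sum_{j\mid j\neq k}D_{jk}$, which agree with those of $3\sum_i x_i^2$. The paper simply writes $p_1 = 3\sum_i x_i^2 + \sum_i d_i x_i + \mu$ and kills the $d_i$ with (\ref{equ:sw2}); your bookkeeping via $\lambda_k$ from the proof of Proposition \ref{prop:ring} is the same computation.
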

\begin{proof}
By the signature theorem, $\int_{E/B} p_1 = 3n$. Therefore
\[
p_1 = 3(x_1^2 + \cdots + x_n^2) + \sum_{i=1}^n d_i x_i + \mu
\]
for some $d_i \in H^2(B ; \mathbb{Q})$ and some $\mu \in H^4(B ; \mathbb{Q})$. Then using Equation (\ref{equ:sw2}), we find that $d_i = 0$ for all $i$ and hence
\[
p_1 = 3(x_1^2 +  \cdots + x_n^2) + \mu.
\]
\end{proof}

\begin{proposition}\label{prop:bk}
For each $i$, we have
\[
-2B_i + 3 \sum_{j | j \neq i } D_{ij}^2 + \mu = 0.
\]
\end{proposition}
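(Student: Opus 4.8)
The asserted identity lives in $H^4(B;\mathbb Q)$ and carries a single free index $i$; the natural — and so far unused — source of such a relation is the $d=1$ case of families Bauer--Furuta theory, i.e. Theorem \ref{thm:sw2}. The plan is therefore to apply Theorem \ref{thm:sw2} to $c = 3x_i + \sum_{j\neq i}x_j$ (taking all signs $+1$; other choices give the same conclusion) and to compare the $H^4(B;\mathbb Q)$-components of the two sides of
\[
\int_{E/B} e^{c/2}\,\hat A(T(E/B)) = e^{u},\qquad u\in H^2(B;\mathbb Q).
\]

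First I would record the content of this identity degree by degree. Writing $\hat A(T(E/B)) = 1 - \tfrac{p_1}{24} + \tfrac{7p_1^2 - 4p_2}{5760} + \cdots$ and expanding $e^{c/2}$, the $H^0$-, $H^2$- and $H^4$-parts of the left side are the fibre integrals of the degree-$4$, $6$ and $8$ parts of $e^{c/2}\hat A(T(E/B))$, namely
\[
\int_{E/B}\Bigl(\tfrac{c^2}{8} - \tfrac{p_1}{24}\Bigr),\qquad \int_{E/B}\Bigl(\tfrac{c^3}{48} - \tfrac{c\,p_1}{48}\Bigr),\qquad \int_{E/B}\Bigl(\tfrac{c^4}{384} - \tfrac{c^2 p_1}{192} + \tfrac{7p_1^2 - 4p_2}{5760}\Bigr),
\]
which must equal $1$, $u$ and $\tfrac12 u^2$ respectively. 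The first is automatic. For the second I would substitute $p_1 = 3(x_1^2 + \cdots + x_n^2) + \mu$ (Proposition \ref{prop:p1}) and the multiplication table of Proposition \ref{prop:ring}, using $\int_{E/B}x_k^3 = 0$ and relation (\ref{equ:sw1}); after the cancellations (all $D_{ji}$-terms and the cross-sums drop out) this yields the clean formula $u = \tfrac12\sum_{j\neq i}D_{ij}$.

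The real work is the third identity. I would substitute $p_2 = e^2$ and $p_1 = 3(x_1^2 + \cdots + x_n^2) + \mu$, then reduce every monomial in the $x_k$ to the canonical $H^*(B;\mathbb Q)$-basis $1,x_1,\dots,x_n,\nu$ using Proposition \ref{prop:ring} (and $E_{ij} = -D_{ij}D_{ji}$) together with $x_k^3 = B_k x_k + C_k$ from Proposition \ref{prop:xcubed}, reading off the fibre integrals from properties (1)--(8) of that basis. The $\hat A$-degree-$8$ contribution, which involves $\int_{E/B}p_1^2$ and $\int_{E/B}e^2$, I would dispose of using the families signature theorem $\int_{E/B}p_1^2 = 7\int_{E/B}e^2$ together with relation (\ref{equ:sw4}); concretely these give $\int_{E/B}\tfrac{7p_1^2 - 4p_2}{5760} = \tfrac{1}{384}\bigl(5\sum_k B_k + 6\sum_{k\neq l}D_{kl}^2 + 2n\mu\bigr)$. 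I would then collect terms, using the associativity relations (\ref{equ:assoc1})--(\ref{equ:assoc4}) to simplify the quadratic expressions in the $D_{kl}$. The outcome should be that all the ``global'' contributions — $\sum_{j\neq i}B_j$, $\sum_{k\neq l}D_{kl}^2$, products $D_{jk}D_{j'k'}$ with all indices $\neq i$, and $n\mu$ — cancel against one another and against $\tfrac12 u^2$, leaving exactly $\tfrac{1}{24}\bigl(-2B_i + 3\sum_{j\neq i}D_{ij}^2 + \mu\bigr) = 0$.

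The hard part is precisely this last step: expanding $\int_{E/B}c^4$ and $\int_{E/B}c^2 p_1$ in terms of $B_k, D_{kl}, \mu$ involves a large number of fibre integrals (of $x_i x_j x_k^2$, $x_j^2 x_k x_l$, and similar with various distinct indices), and verifying that after invoking (\ref{equ:sw4}), the families signature theorem and the associativity relations everything collapses to the stated linear relation is a substantial bookkeeping exercise. As a consistency check: for $n=1$ there are no $D_{ij}$ and the assertion reduces to $\mu = 2B_1$, which agrees with $p_1 = 3x^2 + \mu = 3x^2 + 2B_1$ as in Theorem \ref{thm:n=1}. Note also that summing the asserted identity over $i$ gives $-2\sum_k B_k + 3\sum_{k\neq l}D_{kl}^2 + n\mu = 0$, which already follows from Proposition \ref{prop:p1}, relation (\ref{equ:sw4}) and the families signature theorem; the role of the $d=1$ theory is to refine this to a relation for each individual $i$.
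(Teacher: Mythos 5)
Your proposal is correct in substance and rests on the same key input as the paper --- the degree-$4$ component of the $d=1$ identity $\int_{E/B}e^{c/2}\hat A(T(E/B))=e^u$ from Theorem \ref{thm:sw2} with $c=3x_i+\sum_{j\neq i}\epsilon_jx_j$, together with $u=\tfrac12\sum_{j\neq i}\epsilon_jD_{ij}$ --- but you execute the comparison differently in two places. First, you fix all $\epsilon_j=+1$, whereas the paper keeps the signs free and uses linear independence of the characters $\epsilon_I$ to isolate the $\epsilon$-independent component $v_\emptyset$ of the right-hand side, which it then equates with $\tfrac18\sum_{j\neq i}D_{ij}^2$. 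With a single sign choice you instead get the \emph{sum} $\sum_Iv_I=\tfrac18\bigl(\sum_{j\neq i}D_{ij}\bigr)^2$, so the cross terms $\tfrac14\sum_{j<k}D_{ij}D_{ik}$ on the left must be matched against $\tfrac14\sum_{j<k}\int_{E/B}x_i^2x_jx_k=\tfrac14\sum_{j<k}\bigl(D_{jk}D_{ij}+D_{kj}D_{ik}-D_{jk}D_{kj}\bigr)$ on the right; this cancellation is exactly relation (\ref{equ:assoc1}), which you correctly anticipate needing but which the paper's sign-averaging renders unnecessary. Second, you evaluate the $\hat A$-degree-$8$ term directly via the families signature theorem and (\ref{equ:sw4}) --- your formula $\tfrac{1}{384}\bigl(5\sum_kB_k+6\sum_{k\neq l}D_{kl}^2+2n\mu\bigr)$ checks out --- whereas the paper eliminates this term without ever evaluating it, by observing that $c-2x_i=x_i+\sum_{j\neq i}\epsilon_jx_j$ is again a characteristic with $d=0$, so Theorem \ref{thm:sw} gives $\int_{E/B}e^{(c-2x_i)/2}\hat A=0$, and subtracting its degree-$4$ part kills the $\tfrac{1}{5760}(7p_1^2-4e^2)$ contribution along with most of the remaining global terms. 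I verified your route closes up for $n=1$ (giving $\mu=2B_1$) and $n=2$ (giving $3D_{12}^2=2B_1-\mu$), and the general-$n$ cancellations (the $\sum_{j\neq i}B_j$, $D_{jk}^2$ and $D_{ji}^2$ terms, and the vanishing of $\int x_jx_kx_lx_m$ and $\int x_j^3x_k$) all go through; so your argument is sound, just substantially heavier in bookkeeping. If you adopt the paper's two devices --- vary the signs to project onto $v_\emptyset$, and subtract the $d=0$ identity for $c-2x_i$ --- the computation collapses to the half-page in Proposition \ref{prop:bk} and requires neither (\ref{equ:sw4}), nor the degree-$4$ signature relation, nor (\ref{equ:assoc1}).
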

\begin{proof}
We use Theorem \ref{thm:sw2}. For $j \neq i$, let $\epsilon_j \in \{ 1 , -1\}$ and set
\[
c = 3x_i + \sum_{j | j \neq i } \epsilon_j x_j.
\]
Then
\begin{equation}\label{equ:eu}
\int_{E/B} e^{c/2} \hat{A}(T(E/B)) = e^{u},
\end{equation}
where 
\[
u = \int_{E/B} \dfrac{1}{48}c^3 - \dfrac{1}{48} p_1 c.
\]
Since $u$ is cubic in $c$, it can be expanded in the form
\[
u = u_0 + \sum_{j | j \neq i} \epsilon_j u_j + \sum_{j,k | i,j,k \text{ distinct} } \epsilon_j \epsilon_k u_{jk} + \sum_{j,k,l | i,j,k,l \text{ distinct} } \epsilon_j \epsilon_k \epsilon_l u_{jkl}
\]
for some $u_0, u_{j}, u_{jk}, u_{jkl} \in H^2( B ; \mathbb{Q})$. From Proposition \ref{prop:p1} we find that
\[
u = \dfrac{1}{48} \int_{E/B} \left( (3x_i + \sum_{j | j \neq i } \epsilon_j x_j)^3 - (3x_i + \sum_{j | j \neq i } \epsilon_j x_j)( 3(x_1^2 + \cdots + x_n^2) + \mu ) \right).
\]
Expanding this out, we find
\[
u_0 = \dfrac{1}{48} \int_{E/B} \left( 27 x_i^3 + 9x_i \sum_{j | j \neq i} x_j^2 - 9x_i^3 - 9x_i \sum_{j | j \neq i } x_j^2 \right) = \dfrac{3}{8} \int_{E/B} x_i^3 = 0,
\]
\begin{align*}
u_j &= \dfrac{1}{48} \int_{E/B} \left( 27 x_i^2 x_j + x_j^3 + 3x_j \sum_{k | k \neq i,j} x_k^2 -3x_j^3 - 3x_j \sum_{k | k \neq j } x_k^2 \right) \\
&= \dfrac{1}{48} \int_{E/B} \left( 27 x_i^2 x_j + 3x_j \sum_{k | k \neq i,j} x_k^2 - 3x_j \sum_{k | k \neq i, j } x_k^2  - 3x_j x_i^2 \right) \\
&= \dfrac{1}{48} \int_{E/B} \left( 24 x_i^2 x_j \right) \\
&= \dfrac{1}{2} \int_{E/B} x_i^2 x_j = \dfrac{1}{2} D_{ij}.
\end{align*}
We also have
\[
u_{ij} = \dfrac{1}{48} \int_{E/B} \left( 18 x_i x_j x_k \right ) = 0,
\]
and
\[
u_{ijk} = \dfrac{1}{48} \int_{E/B} \left( 6 x_i x_j x_k \right) = 0.
\]
So
\[
u = \dfrac{1}{2} \sum_{j | j \neq i } \epsilon_j D_{ij}.
\]
It follows that
\begin{equation}\label{equ:u2}
\dfrac{1}{2}u^2 = \dfrac{1}{8}\left( \sum_{j | j \neq i} D_{ij}^2 \; \; + \; \; 2 \! \! \! \! \! \! \! \! \! \! \! \! \sum_{j,k | i,j,k \text{ distinct} }  \! \! \! \! \! \! \epsilon_j \epsilon_k D_{ij}D_{ik} \right).
\end{equation}
Equating degree $4$ components of Equation (\ref{equ:eu}), we get that
\begin{equation}\label{equ:usquared}
\dfrac{1}{2}u^2 = \int_{E/B} \left( \dfrac{1}{384} c^4 - \dfrac{1}{192} c^2 p_1 + \dfrac{1}{5760}(7p_1^2-4e^2) \right).
\end{equation}
Observe that
\[
c - 2x_i = x_i + \sum_{j | j \neq i } \epsilon_j x_j.
\]
Therefore, Theorem \ref{thm:sw} gives:
\[
\int_{E/B} e^{(c-2x_i)/2} \hat{A}( T(E/B) ) = 0.
\]
Extracting the degree $4$ terms, we find
\begin{equation}\label{equ:degree4}
\int_{E/B} \left( \dfrac{1}{384} (c-2x_i)^4 - \dfrac{1}{192} (c-2x_i)^2 p_1 + \dfrac{1}{5760}(7p_1^2-4e^2) \right) = 0.
\end{equation}
Combining Equations (\ref{equ:usquared}) and (\ref{equ:degree4}) gives:
\[
\dfrac{1}{2}u^2 = \int_{E/B} \left( \dfrac{1}{384} [c^4 - (c-2x_i)^4] - \dfrac{1}{192} [c^2-(c-2x_i)^2] p_1 \right).
\]
The right hand side can be expanded out in the form $\sum_{I} \epsilon_I v_I$ for some $v_I \in H^4(B ; \mathbb{Q})$. We will be interested in the constant term $v_{\emptyset}$ (here ``constant" means independent of the $\epsilon_j$). Comparing with Equation (\ref{equ:u2}), we have
\[
v_{\emptyset} = \dfrac{1}{8} \sum_{j | j \neq i} D_{ij}^2.
\]
Using Proposition \ref{prop:p1}, we find
\[
-\dfrac{1}{192} \int_{E/B} [ c^2 - (c-2x_i)^2] p_1 = -\dfrac{1}{48} \int_{E/B} \left( (3x_i+\sum_{j | j \neq i} \epsilon_j x_j )x_i - x_i^2 \right)\left( 3(x_1^2 + \cdots + x_n^2) + \mu \right).
\]
The constant term in this expression is
\begin{equation}\label{equ:const1}
\begin{aligned}
-\dfrac{1}{48} \int_{E/B} ( 3x_i^2 - x_i^2)( 3(x_1^2 + \cdots + x_n^2) + \mu ) &= -\dfrac{1}{24} \int_{E/B} ( 3x_i^4 + 3x_i^2 \sum_{j | j \neq i} x_j^2 + \mu ) \\
&= -\dfrac{1}{8}B_i  - \dfrac{1}{8} \sum_{j | j \neq i} \int_{E/B} x_i^2 x_j^2 - \dfrac{1}{24} \mu.
\end{aligned}
\end{equation}
Also, we have
\[
\dfrac{1}{384} \int_{E/B} ( c^4 - (c-2x_i)^4 ) = \dfrac{1}{48}\int_{E/B} \left( c^3 x_i - 3c^2 x_i^2 + 4c x_i^3 - 2x_i^4 \right).
\]
The constant term in this expression is
\begin{equation}\label{equ:const2}
\begin{aligned}
& \dfrac{1}{48} \left( -2B_i + 12 B_i - 27B_i - 3 \sum_{j | j \neq i} \int_{E/B} x_i^2 x_j^2 + 27 B_i + 9 \sum_{j | j \neq i} \int_{E/B} x_i^2 x_j^2 \right) \\
& \; \; \; \; = \dfrac{1}{48}\left( 10B_i + 6\sum_{j | j \neq i} \int_{E/B} x_i^2 x_j^2 \right) \\
& \; \; \; \; = \dfrac{5}{24}B_i + \dfrac{1}{8}\sum_{j | j \neq i} \int_{E/B} x_i^2 x_j^2.
\end{aligned}
\end{equation}
Combining the constant terms from (\ref{equ:const1}) and (\ref{equ:const2}) and equating this to $v_{\emptyset}$, we obtain
\begin{align*}
\dfrac{1}{8} \sum_{j | j \neq i} D_{ij}^2 &= -\dfrac{1}{8}B_i  - \dfrac{1}{8} \sum_{j | j \neq i} \int_{E/B} x_i^2 x_j^2 - \dfrac{1}{24} \mu + \dfrac{5}{24}B_i + \dfrac{1}{8}\sum_{j | j \neq i} \int_{E/B} x_i^2 x_j^2 \\
&= \dfrac{1}{12} B_i - \dfrac{1}{24}\mu.
\end{align*}
Hence
\[
3 \sum_{j | j \neq i } D_{ij}^2 = 2B_i - \mu.
\]
\end{proof}

We now consider the Euler class $e$. To determine $e$ we will make use of the following result \cite[Lemma 2.3]{rw}:

\begin{lemma}\label{lem:trace}
For any $\alpha \in H^{ev}(E ; \mathbb{Q})$, we have
\[
\int_{E/B} \alpha e = Trace \left( \alpha : H^{ev}(E ; \mathbb{Q}) \to H^{ev}(E ; \mathbb{Q}) \right)
\]
where we view $H^{ev}(E ; \mathbb{Q})$ as a finite dimensional free module over $H^{ev}(B ; \mathbb{Q})$.
\end{lemma}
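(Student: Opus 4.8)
The plan is to deduce the lemma from the classical self-intersection formula for the fibrewise diagonal, combined with the Leray--Hirsch description of $H^*(E;\mathbb{Q})$ established above. Write $m = \dim X$ (so $m=4$ here), and assume $B$ is connected. Form the fibre product $E\times_B E$, with its two projections $p_1,p_2 : E\times_B E\to E$ satisfying $\pi p_1 = \pi p_2$, and let $\Delta : E\to E\times_B E$ be the diagonal, so that $p_1\Delta = p_2\Delta = \mathrm{id}_E$. By Proposition \ref{prop:degen} and the Leray--Hirsch theorem, $H^{ev}(E;\mathbb{Q})$ is a free $H^{ev}(B;\mathbb{Q})$-module; fix a homogeneous basis $\{a_i\}$. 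The fibrewise pairing $(a,b)\mapsto \int_{E/B}(ab)$ is fibrewise perfect, and since its leading term lands in $H^0(B;\mathbb{Q}) = \mathbb{Q}$ while the lower-order corrections are nilpotent, the associated Gram matrix is invertible over $H^{ev}(B;\mathbb{Q})$; hence there is a dual basis $\{a^i\}$ with $\int_{E/B}(a_i a^j) = \delta_i^j$. Applying Leray--Hirsch to $p_2 : E\times_B E\to E$ (whose fibre is $X$) likewise gives the Künneth isomorphism $H^{ev}(E\times_B E;\mathbb{Q})\cong H^{ev}(E;\mathbb{Q})\otimes_{H^{ev}(B;\mathbb{Q})}H^{ev}(E;\mathbb{Q})$, $a\otimes b\mapsto p_1^*(a)p_2^*(b)$.

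Next I would identify the fibrewise diagonal class $\delta = \Delta_{!}(1)\in H^m(E\times_B E;\mathbb{Q})$. The projection formula for $\Delta$ gives $\delta\cdot p_1^*(\alpha) = \Delta_{!}(\Delta^*p_1^*\alpha) = \Delta_{!}(\alpha)$, and since $p_2\Delta = \mathrm{id}$ this yields the reproducing property $(p_2)_{!}\big(\delta\cdot p_1^*(\alpha)\big) = \alpha$ for all $\alpha\in H^{ev}(E;\mathbb{Q})$. Writing $\delta = \sum_i p_1^*(a_i)p_2^*(\beta_i)$ via the Künneth isomorphism, then multiplying by $p_1^*(\alpha)$ and applying $(p_2)_{!}$, using the projection formula for $p_2$ together with the base-change identity $(p_2)_{!}p_1^* = \pi^*\int_{E/B}$ coming from the Cartesian square, and finally taking $\alpha = a^j$, forces $\beta_j = a^j$. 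Hence
\[
\delta = \sum_i p_1^*(a_i)\,p_2^*(a^i).
\]

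The normal bundle of $\Delta(E)$ in $E\times_B E$ is canonically the vertical tangent bundle $T(E/B)$, so the self-intersection formula gives $\Delta^*(\delta) = e(T(E/B)) = e$. Pulling the displayed identity back along $\Delta$ and using $p_1\Delta = p_2\Delta = \mathrm{id}$ gives $e = \sum_i a_i a^i$ in $H^m(E;\mathbb{Q})$; for the explicit basis $1,x_1,\dots,x_n,\nu$ the dual basis is $\nu,x_1,\dots,x_n,1$, recovering the identity $e = 2\nu + \sum_i x_i^2$. Now for $\alpha\in H^{ev}(E;\mathbb{Q})$, write the matrix of multiplication by $\alpha$ on the free module $H^{ev}(E;\mathbb{Q})$ as $\alpha\,a_i = \sum_j \pi^*(M_{ij})\,a_j$, so that $\mathrm{Trace}(\alpha) = \sum_i M_{ii}$; multiplying by $a^j$ and integrating over the fibre gives $\int_{E/B}(\alpha\,a_i\,a^j) = M_{ij}$. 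Therefore
\[
\int_{E/B}(\alpha\, e) = \sum_i \int_{E/B}(\alpha\, a_i\, a^i) = \sum_i M_{ii} = \mathrm{Trace}(\alpha),
\]
which is the assertion.

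The main obstacle is not any single deep step. The only genuinely geometric input is the self-intersection formula $\Delta^*\Delta_{!}(1) = e(N_\Delta)$ with $N_\Delta\cong T(E/B)$, which is standard, and because all the cohomology in play sits in even degrees there are no sign or supertrace subtleties to worry about. The fiddly points are purely formal: verifying that the fibrewise Poincaré Gram matrix is invertible over $H^{ev}(B;\mathbb{Q})$ (so that the dual basis exists and the Künneth isomorphism is available for $E\times_B E$), and keeping the base-change and projection-formula identities, along with the orientation conventions, mutually consistent throughout.
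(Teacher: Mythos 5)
Your proof is correct. The paper does not prove this lemma itself but simply cites \cite[Lemma 2.3]{rw}, and your argument --- decomposing the fibrewise diagonal class as $\Delta_!(1)=\sum_i p_1^*(a_i)p_2^*(a^i)$ via Leray--Hirsch and the reproducing property, then pulling back along $\Delta$ to get $e=\sum_i a_i a^i$ and pairing with $\alpha$ --- is essentially the standard proof given in that reference, simplified here by the fact that $H^*(X;\mathbb{Q})$ is concentrated in even degrees so no supertrace signs arise.
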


\begin{proposition}\label{prop:euler}
We have
\[
e = 2\nu + x_1^2 + \cdots + x_n^2.
\]
\end{proposition}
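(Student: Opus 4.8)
The plan is to determine $e \in H^4(E;\mathbb{Q})$ by pairing it against the $H^{ev}(B;\mathbb{Q})$-module basis $1, x_1, \dots, x_n, \nu$, using the trace formula of Lemma \ref{lem:trace}. The key preliminary observation is that the fibrewise Poincar\'e pairing $\langle \alpha, \beta \rangle = \int_{E/B} \alpha\beta$ on the free $H^{ev}(B;\mathbb{Q})$-module $H^{ev}(E;\mathbb{Q})$ is nondegenerate: by properties (1)--(8) its Gram matrix in the basis $1, x_1, \dots, x_n, \nu$ has $\langle 1, \nu\rangle = 1$, $\langle x_i, x_j\rangle = \delta_{ij}$ and all remaining entries zero, so it is an integral matrix of determinant $\pm 1$. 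Since both $\alpha \mapsto \int_{E/B}\alpha e$ and $\alpha \mapsto \mathrm{Trace}(\alpha \cdot : H^{ev}(E;\mathbb{Q}) \to H^{ev}(E;\mathbb{Q}))$ are $H^{ev}(B;\mathbb{Q})$-linear and agree by Lemma \ref{lem:trace}, the class $e$ is the unique element of $H^{ev}(E;\mathbb{Q})$ satisfying $\int_{E/B}\alpha e = \mathrm{Trace}(\alpha\cdot)$ for all $\alpha$; by linearity it suffices to verify this for $\alpha$ ranging over the basis elements $1$, $x_k$, $\nu$. So the task reduces to checking that $2\nu + x_1^2 + \cdots + x_n^2$ pairs with $1$, $x_k$, $\nu$ in the same way.

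Reading off diagonal entries from the multiplication table of Proposition \ref{prop:ring}: $\mathrm{Trace}(1\cdot) = n+2$ since $H^{ev}(E;\mathbb{Q})$ has rank $n+2$; $\mathrm{Trace}(x_k \cdot) = \sum_{i \neq k} D_{ik}$, because the $x_i$-coefficient of $x_k x_i = D_{ki}x_k + D_{ik}x_i - D_{ki}D_{ik}$ equals $D_{ik}$ for $i \neq k$, while the $x_k$-coefficient of $x_k^2$ and the $\nu$-coefficient of $x_k\nu$ both vanish; and $\mathrm{Trace}(\nu\cdot) = \sum_{i} G_i$, because the $x_i$-coefficient of $x_i\nu = G_i x_i - \sum_{j \neq i} D_{ij}D_{ji}x_j + J_i$ is $G_i$, while the $\nu$-coefficient of $\nu^2$ vanishes.

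On the other side, properties (1)--(8) together with Proposition \ref{prop:ring} give $\int_{E/B}(2\nu + \sum_i x_i^2) = 2 + n$; next $\int_{E/B} x_k(2\nu + \sum_i x_i^2) = \sum_{i} \int_{E/B} x_k x_i^2 = \sum_{i \neq k} D_{ik}$, using $\int_{E/B} x_k^3 = 0$ and $\int_{E/B} x_k x_i^2 = D_{ik}$ for $i \neq k$ (expand $x_k x_i^2 = D_{ki}x_k x_i + D_{ik}x_i^2 - D_{ki}D_{ik}x_i$ and integrate); and $\int_{E/B}\nu(2\nu + \sum_i x_i^2) = \sum_i \int_{E/B}\nu x_i^2 = \sum_i G_i$, using $\int_{E/B}\nu^2 = 0$ and $\int_{E/B}\nu x_i^2 = G_i$. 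These agree with the three traces above, so $\int_{E/B}\alpha\,(e - 2\nu - \sum_i x_i^2) = 0$ for all $\alpha \in H^{ev}(E;\mathbb{Q})$, and nondegeneracy of the fibrewise pairing yields $e = 2\nu + x_1^2 + \cdots + x_n^2$.

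All the computations are routine bookkeeping with the multiplication table and the normalisation (1)--(8); the only real content is noticing that Lemma \ref{lem:trace} together with nondegeneracy of the fibrewise intersection pairing pins $e$ down, so that one need only test the proposed formula against the finitely many classes $1, x_k, \nu$. Equivalently, and without invoking nondegeneracy abstractly, one may write $e = (n+2)\nu + \sum_i c_i x_i + \pi^*(\tau)$, with the coefficient $n+2$ forced by Gauss--Bonnet, then apply Lemma \ref{lem:trace} with $\alpha = x_k$ to get $c_k = \sum_{i \neq k} D_{ik}$ and with $\alpha = \nu$ to get $\tau = 0$, after which comparison with $\sum_i x_i^2 = n\nu + \sum_k\big(\sum_{i\neq k}D_{ik}\big)x_k + \sum_i G_i$ finishes the argument.
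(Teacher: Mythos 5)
Your proposal is correct and follows essentially the same route as the paper: both use Lemma \ref{lem:trace} to evaluate $\int_{E/B} e\,\alpha$ for $\alpha = 1, x_k, \nu$ via the multiplication table of Proposition \ref{prop:ring}, and then pin down $e$ by matching against the basis $1, x_1, \dots, x_n, \nu$ (your closing paragraph, expanding $e = (n+2)\nu + \sum_i c_i x_i + \pi^*(\tau)$ and solving for the coefficients, is precisely the paper's argument). The only cosmetic difference is that you make the nondegeneracy of the fibrewise pairing explicit via the unimodular Gram matrix, whereas the paper uses it implicitly when equating coefficients.
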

\begin{proof}
From Lemma \ref{lem:trace} and we have that
\[
\int_{E/B} e x_i = Trace \left( x_i : H^{ev}(E ; \mathbb{Q}) \to H^{ev}(E ; \mathbb{Q}) \right).
\]
We compute the trace of $x_i$ using the basis $1 , x_1, \dots , x_n , \nu$. From Equation (\ref{equ:45}) the coefficient of $x_j$ in $x_ix_j$ for $i \neq j$ is $D_{ji}$. From Equation (\ref{equ:46}) the coefficient of $x_i$ in $x_i^2$ is zero and from Equation (\ref{equ:47}) the coefficient of $\nu$ in $x_i \nu$ is zero. Hence
\[
\int_{E/B} e x_i = \sum_{j | j \neq i } D_{ji}.
\]
Similarly we compute the trace of $\nu$ using the basis $1,x_1 , \dots , x_n , \nu$. From Equation (\ref{equ:47}) the coefficient of $x_i$ in $\nu x_i$ is $G_i$ and from Equation (\ref{equ:48}) the coefficient of $\nu$ in $\nu^2$ is zero. Hence
\[
\int_{E/B} e \nu = \sum_{i=1}^n G_i.
\]
From $\int_{E/B} e = \chi(X) = 2+n$, it follows that
\[
e = 2\nu + x_1^2 + \cdots + x_n + \sum_{i=1}^n v_i x_i + w
\]
for some $v_i \in H^2(B ; \mathbb{Q})$ and some $w \in H^4( B ; \mathbb{Q})$. By direct computation one finds that
\[
\int_{E/B} e x_i = v_i + \sum_{j | j \neq i} D_{ji}
\]
and
\[
\int_{E/B} e \nu = \sum_{i=1}^n G_i + w.
\]
Therefore $v_i = 0$ for all $i$ and $w=0$, so that
\[
e = 2 \nu + x_1^2 + \cdots + x_n^2.
\]
\end{proof}

\section{Tautological classes}\label{sec:tautclasses}

In this section we use the results from Section \ref{sec:cohomring} on the structure of the cohomology ring of $E$ in order to compute tautological classes. The computation involves many complicated fibre integrals and hence we find it useful to first prove a general result about such integrals. The following result may be thought of as a kind of diagonalisation theorem for fibre integrals.

\begin{proposition}[Integration formula]\label{prop:intf}
Let $f( t_1 , \dots , t_n)$ be a polynomial in $t_1, \dots , t_n$ with coefficients in $H^{ev}(B ; \mathbb{Q})$. Then
\begin{equation}\label{equ:intformula}
\int_{E/B} f(x_1 , \dots , x_n) = \sum_{j=1}^n \int_{E/B} f_j(x_j),
\end{equation}
where
\[
f_j(x_j) = f( D_{j1} , \dots , D_{j (j-1) } , x_j , D_{j (j+1)} , \dots , D_{jn} ).
\]

\end{proposition}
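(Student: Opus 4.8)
The plan is to exploit the explicit ring structure of $H^*(E;\mathbb{Q})$ from Proposition \ref{prop:ring} and reduce everything to the one-variable case by induction. Since both sides of (\ref{equ:intformula}) are $H^{ev}(B;\mathbb{Q})$-linear in $f$, it suffices to treat a monomial $f = x_1^{a_1}\cdots x_n^{a_n}$. For such an $f$ one has $f_j(x_j) = \big(\prod_{k\neq j}D_{jk}^{a_k}\big)x_j^{a_j}$, so after pulling the classes $D_{jk}\in H^*(B;\mathbb{Q})$ out of the fibre integral the claim becomes
\[
\int_{E/B} x_1^{a_1}\cdots x_n^{a_n} = \sum_{j=1}^{n}\Big(\prod_{k\neq j}D_{jk}^{a_k}\Big)\int_{E/B}x_j^{a_j}.
\]
I would prove this by induction on the number $s$ of indices $k$ with $a_k\ge 1$. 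The cases $s\le 1$ are immediate from $\int_{E/B}1 = \int_{E/B}x_j = 0$: if $f = x_i^{a_i}$, then for each $j\neq i$ the coefficient $\prod_{k\neq j}D_{jk}^{a_k}$ reduces to $D_{ji}^{a_i}$ while $\int_{E/B}x_j^{a_j} = \int_{E/B}1 = 0$, so only the $j=i$ term of the right-hand side survives. (Each $\int_{E/B}x_j^a$ is a definite class in $H^*(B;\mathbb{Q})$, by Proposition \ref{prop:xcubed} a universal polynomial in $B_j, C_j$, but its value plays no role here.)

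For the inductive step, choose distinct $i,j$ with $a_i, a_j \ge 1$ and write $f = h\,x_i^{a_i}x_j^{a_j}$ with $h = \prod_{k\neq i,j}x_k^{a_k}$. From the basic relation $(x_i - D_{ji})(x_j - D_{ij}) = 0$ of Proposition \ref{prop:ring}, together with $u^m - v^m = (u-v)\sum_r u^r v^{m-1-r}$ applied to the commuting pairs $(x_i, D_{ji})$ and $(x_j, D_{ij})$, one obtains the power form
\[
(x_i^{a_i} - D_{ji}^{a_i})(x_j^{a_j} - D_{ij}^{a_j}) = 0, \qquad\text{i.e.}\qquad x_i^{a_i}x_j^{a_j} = D_{ij}^{a_j}x_i^{a_i} + D_{ji}^{a_i}x_j^{a_j} - D_{ij}^{a_j}D_{ji}^{a_i}.
\]
Substituting this expresses $f$ as an $H^{ev}(B;\mathbb{Q})$-combination of the three monomials $h\,x_i^{a_i}$, $h\,x_j^{a_j}$ and $h$, each of which involves at most $s-1$ variables, so the inductive hypothesis applies to all three. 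Expanding and collecting, and using that $h\,x_j^{a_j}$ and $h$ contain no $x_i$ (and symmetrically), the total coefficient of $\int_{E/B}x_i^{a_i}$ comes out to $\prod_{k\neq i}D_{ik}^{a_k}$ and that of $\int_{E/B}x_j^{a_j}$ to $\prod_{k\neq j}D_{jk}^{a_k}$, exactly as the right-hand side requires, while for each spectator index $l\neq i,j$ the inductive formula yields the coefficient
\[
\Big(\prod_{k\neq i,j,l}D_{lk}^{a_k}\Big)\big(D_{ij}^{a_j}D_{li}^{a_i} + D_{ji}^{a_i}D_{lj}^{a_j} - D_{ij}^{a_j}D_{ji}^{a_i}\big)
\]
in place of the required $\big(\prod_{k\neq i,j,l}D_{lk}^{a_k}\big)D_{li}^{a_i}D_{lj}^{a_j}$.

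The difference of the two bracketed expressions factors as $(D_{li}^{a_i} - D_{ji}^{a_i})(D_{ij}^{a_j} - D_{lj}^{a_j})$, and this vanishes: the associativity relation (\ref{equ:assoc1}) gives $(D_{li} - D_{ji})(D_{ij} - D_{lj}) = 0$ --- it is (\ref{equ:assoc1}) up to a sign and a relabelling of the indices $i,j,l$ --- and $D_{li}^{a_i} - D_{ji}^{a_i}$ is a multiple of $D_{li} - D_{ji}$ while $D_{ij}^{a_j} - D_{lj}^{a_j}$ is a multiple of $D_{ij} - D_{lj}$. This completes the induction. I expect the main obstacle to be organisational rather than conceptual: keeping track of the three sums produced by the inductive step and identifying what the spectator coefficients must be. The one genuinely non-formal ingredient beyond manipulation in the free $H^*(B;\mathbb{Q})$-module $H^*(E;\mathbb{Q})$ is the associativity constraint (\ref{equ:assoc1}), which is precisely what forces the spectator cross-terms to cancel.
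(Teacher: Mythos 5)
Your argument is correct and follows essentially the same route as the paper's: an induction on the number of variables appearing, driven by the relation $(x_i - D_{ji})(x_j - D_{ij}) = 0$ to reduce the variable count and by the associativity relation (\ref{equ:assoc1}) to cancel the cross-terms at the spectator indices. The only difference is organisational: the paper peels off one variable from a general polynomial via the division algorithm, whereas you work monomial by monomial using the closed-form identity $x_i^{a_i}x_j^{a_j} = D_{ij}^{a_j}x_i^{a_i} + D_{ji}^{a_i}x_j^{a_j} - D_{ij}^{a_j}D_{ji}^{a_i}$; both reductions rest on the same two relations.
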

\begin{proof}
We prove this by induction. Let $m \le n$. Suppose we have shown that (\ref{equ:intformula}) holds whenever $f$ is a polynomial in $m$ or fewer of the variables $t_1, \dots ,t_m$. The base case $m=1$ is trivially true. Suppose the result holds for $1 \le m < n$. To prove the $m+1$ case in general, it is enough to prove it for polynomials of the form $t_j^a f( t_{i_1} , \dots , t_{i_m} )$ for some $j , i_1, \dots , i_m$ and some $a \ge 1$. Without loss of generality, it is enough to prove it for polynomials of the form $t_{m+1}^a f(t_1 , \dots , t_m)$, since we can get the result for $t_j^a f( t_{i_1} , \dots , t_{i_m} )$ from this by reordering the indices. Note that from $x_{m+1} x_1 = D_{(m+1) 1}x_{m+1} D_{1 (m+1)} x_1 - D_{(m+1) 1}D_{1 (m+1)}$, we get that
\[
x_{m+1}( x_1 - D_{(m+1) 1} ) = D_{1 (m+1)} (x_1 - D_{(m+1) 1}).
\]
Next, by the division algorithm we can write
\[
f(x_1 , \dots , x_m ) = f( D_{(m+1) 1} , x_2 , \dots , x_m) + (x_1 - D_{(m+1) 1}) g( x_1 , \dots , x_m)
\]
for some polynomial $g(x_1 , \dots , x_m)$. Then
\begin{align*}
& \int_{E/B} x_{m+1}^a f(x_1 , \dots , x_m ) = \int_{E/B} x_{m+1}^a \left( f( D_{(m+1) 1} , x_2 , \dots , x_m) + (x_1 - D_{(m+1) 1}) g( x_1 , \dots , x_m) \right) \\
& \quad \quad \quad = \int_{E/B} x_{m+1}^a  f( D_{(m+1) 1} , x_2 , \dots , x_m ) ) + D_{1 (m+1)}^a (x_1-D_{(m+1) 1}) g(x_1, \dots , x_m).
\end{align*}
Both terms involve at most $m$ variables, so by induction we get
\begin{align*}
& \int_{E/B} x_{m+1}^a f(D_{(m+1) 1} , x_2 , \dots , x_m) = \int_{E/B} x_{m+1}^a f( D_{(m+1) 1} , D_{(m+1) 2} , \dots , D_{(m+1) m} ) \\
& \quad \quad \quad + \int_{E/B} \sum_{j=1}^m D_{j (m+1)}^a f( D_{(m+1) 1} , D_{j2} , \dots , D_{j (j-1)} , x_j , D_{j (j+1)} , \dots , D_{j m})
\end{align*}
and
\begin{align*}
& \int_{E/B} D_{1 (m+1)}^a (x_1-D_{(m+1) 1}) g(x_1, \dots , x_m) \\
& \; \; \; \; = \int_{E/B} \sum_{j=1}^m D_{1 (m+1)}^a (D_{j1} - D_{(m+1) 1}) g( D_{j1} , \dots, D_{j (j-1)} , x_j , D_{j (j+1) } , \dots , D_{j m} ) \\
& \; \; \; \; = \int_{E/B} \sum_{j=1}^m D_{j (m+1)}^a (D_{j1} - D_{(m+1) 1}) g( D_{j1} , \dots, D_{j (j-1)} , x_j , D_{j (j+1) } , \dots , D_{j m} )
\end{align*}
To get the last line we used the following special case of Equation (\ref{equ:assoc1})
\[
(D_{1 (m+1)} - D_{j (m+1)})( D_{j1} - D_{(m+1) 1} ) = 0
\]
to deduce that
\[
D_{1 (m+1)}( D_{j1} - D_{(m+1) 1} ) = D_{j (m+1)} ( D_{j1} - D_{(m+1) 1})
\]
and hence by repeated application of this identity, that
\[
D_{1 (m+1)}^a( D_{j1} - D_{(m+1) 1} ) = D_{j (m+1)}^a ( D_{j1} - D_{(m+1) 1}).
\]
Adding these two equalities we get
\begin{align*}
\int_{E/B} x_{m+1}^a f(x_1 , \dots , x_m ) &= \int_{E/B} x_{m+1}^a f( D_{(m+1) 1} , D_{(m+1) 2} , \dots , D_{(m+1) m} ) \\
& + \int_{E/B} \sum_{j=1}^m D_{j (m+1)}^a f( D_{(m+1) 1} , D_{j2} , \dots , D_{j (j-1)} , x_j , D_{j (j+1)} , \dots , D_{j m}) \\ 
& + \int_{E/B} \sum_{j=1}^m D_{j m+1}^a (D_{j1} - D_{(m+1) 1}) g( D_{j1} , \dots, D_{j (j-1)} , x_j , D_{j (j+1) } , \dots , D_{j m}) \\
&= \int_{E/B} x_{m+1}^a f( D_{(m+1) 1} , D_{(m+1) 2} , \dots , D_{(m+1) m}) \\
& + \int_{E/B} \sum_{j=1}^m D_{j (m+1)}^a f( D_{j1} , \dots , D_{j (j-1)} , x_j , D_{j (j+1)} , \dots , D_{jm} ).  
\end{align*}
This proves the inductive step and so we are done.
\end{proof}

The computation of the tautological classes $\kappa_{p_1^a e^b}$ is best handled by considering separately the cases where $b$ is even or odd. Because of this, we wish to have an expression for $e^2$ in terms of $x_1, \dots, x_n$ in which there are no cross-terms $x_i x_j$, $i \neq j$. The following proposition gives such an expression.

\begin{proposition}\label{prop:e2}
We have
\[
e^2 = \sum_{i=1}^n (3x_i^2 - B_i)^2 + \lambda
\]
where $\lambda$ satisfies
\[
\lambda = -\sum_{j | j \neq i} (3D_{ij}^2 - B_j)^2
\]
for all $i$.
\end{proposition}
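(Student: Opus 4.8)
The plan is to start from a refined form of the Euler class formula of Proposition~\ref{prop:euler} and square it, using the ring relations of Section~\ref{sec:cohomring} to make everything collapse. First I would establish the identity
\[
e = (3x_i^2 - B_i) + \sum_{j \neq i}(x_j - D_{ij})^2 \qquad\text{in } H^4(E;\mathbb{Q}),
\]
valid for every $i$; it follows by combining $e = 2\nu + \sum_k x_k^2$ (Proposition~\ref{prop:euler}), the relation $\nu = x_i^2 - \sum_{j\neq i}D_{ij}x_j - G_i$ read off Proposition~\ref{prop:ring}, and $B_i = 2G_i + \sum_{j\neq i}D_{ij}^2$ (Equation~(\ref{equ:bg})). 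I then abbreviate $y_{ij} = x_j - D_{ij}$, $e_i = 3x_i^2 - B_i$, $\beta_{ij} = 3D_{ij}^2 - B_j$, so that $e = e_i + T_i$ with $T_i = \sum_{j\neq i}y_{ij}^2$, and observe that the asserted value of $\lambda$ is $-\sum_{j\neq i}\beta_{ij}^2$.

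The second step is to record three consequences of the relations already proved. \emph{(i)} From the factored product $(x_i - D_{ji})(x_j - D_{ij}) = 0$ of Proposition~\ref{prop:ring} one gets $x_i^m\,y_{ij}^2 = D_{ji}^m\,y_{ij}^2$, and hence $e_i\,y_{ij}^2 = \beta_{ji}\,y_{ij}^2$. \emph{(ii)} From $x_j^3 = B_j x_j + C_j$ (Proposition~\ref{prop:xcubed}) and $D_{ij}^3 = B_j D_{ij} + C_j$ (Proposition~\ref{prop:dcubic}), substituting $x_j = y_{ij} + D_{ij}$ yields $y_{ij}^3 + 3D_{ij}y_{ij}^2 + \beta_{ij}y_{ij} = 0$, whence $y_{ij}^4 = (6D_{ij}^2 + B_j)y_{ij}^2 + 3D_{ij}\beta_{ij}y_{ij}$; and, expanding $e_m^2 = 3B_m x_m^2 + 9C_m x_m + B_m^2$ in powers of $y_{im}$ and reducing again with $D_{im}^3 = B_m D_{im} + C_m$, one finds $e_m^2 = \beta_{im}^2 + 3B_m y_{im}^2 + 3D_{im}\beta_{im}y_{im}$ for $m\neq i$. \emph{(iii)} From $(x_j - D_{kj})(x_k - D_{jk}) = 0$ together with Equation~(\ref{equ:assoc1}), which in particular forces $(D_{jk}-D_{ik})^2(D_{ij}-D_{kj}) = 0$, one obtains for distinct $i,j,k$ the cross-term reduction $y_{ij}^2 y_{ik}^2 = (D_{jk}-D_{ik})^2 y_{ij}^2 + (D_{kj}-D_{ij})^2 y_{ik}^2$.

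Now I would expand both sides of the claimed equation. Using (i) on $2e_iT_i$ and (ii), (iii) on $T_i^2 = \sum_{j\neq i}y_{ij}^4 + \sum_{j\neq k,\, j,k\neq i}y_{ij}^2 y_{ik}^2$, the element $e^2 - e_i^2 = 2e_iT_i + T_i^2$ reduces to a combination of the $y_{ij}$ and the $y_{ij}^2$ with coefficients in $H^*(B;\mathbb{Q})$; the coefficient of $y_{ij}$ is $3D_{ij}\beta_{ij}$ and the coefficient of $y_{ij}^2$ is $2\beta_{ji} + 6D_{ij}^2 + B_j + 2\sum_{k\neq i,j}(D_{jk}-D_{ik})^2$. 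On the other side, by (ii), $\sum_{m\neq i}e_m^2 + \lambda$ with $\lambda = -\sum_{j\neq i}\beta_{ij}^2$ reduces to $\sum_{j\neq i}\bigl(3B_j\,y_{ij}^2 + 3D_{ij}\beta_{ij}\,y_{ij}\bigr)$ — the constant $\sum_{j\neq i}\beta_{ij}^2$ being exactly cancelled by $\lambda$. The $y_{ij}$-coefficients already match, and the $y_{ij}^2$-coefficients match precisely because of the identity
\[
3D_{ij}^2 + 3D_{ji}^2 + \sum_{k\neq i,j}(D_{ik}-D_{jk})^2 = B_i + B_j,
\]
which one checks by expanding $B_i + B_j$ via $B_i = 2G_i + \sum_{l\neq i}D_{il}^2$ and Equation~(\ref{equ:assoc2}). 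Since each of (i)--(iii) and this last identity is an honest equality in $H^*(E;\mathbb{Q})$, the two reductions are literally the same element, so $e^2 - e_i^2 = \sum_{m\neq i}e_m^2 + \lambda$, i.e. $e^2 = \sum_{m=1}^n(3x_m^2 - B_m)^2 + \lambda$. As $i$ is arbitrary and the left side is independent of $i$, $\lambda = -\sum_{j\neq i}(3D_{ij}^2 - B_j)^2$ for every $i$.

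I expect the only real obstacle to be the bookkeeping in step (iii) and in the expansion of $T_i^2$: one must symmetrize the cross-term double sum correctly — each unordered pair $\{j,k\}$ contributes $(D_{jk}-D_{ik})^2 y_{ij}^2 + (D_{kj}-D_{ij})^2 y_{ik}^2$, so that after relabelling the coefficient of $y_{ij}^2$ becomes $2\sum_{k\neq i,j}(D_{jk}-D_{ik})^2$ — and one must check that every ``unwanted'' term produced along the way (the ones carrying a factor $D_{ij}-D_{kj}$ or $(D_{ij}-D_{kj})^2$) is annihilated by the appropriate instance of Equation~(\ref{equ:assoc1}). Beyond this there is no conceptual difficulty, since everything is forced by the cohomology ring structure determined in Section~\ref{sec:cohomring}.
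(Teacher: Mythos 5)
Your proof is correct, and it takes a genuinely different route from the paper. The paper squares $e = 2\nu + x_1^2 + \cdots + x_n^2$ using the full multiplication table to obtain $e^2 = \sum_i(3x_i^2-B_i)^2 + \lambda$ with $\lambda = 3\sum_{i\neq j}D_{ij}^2D_{ji}^2 + 4\omega - \sum_iB_i^2$, and then pins down $\lambda$ by evaluating the fibre integral $\int_{E/B}e^2x_i^2$ in two ways: once directly from the ring structure (a lengthy computation involving $\nu^2$, hence $\omega$ and the $J_i$) and once from the diagonalised expression via the integration formula of Proposition~\ref{prop:intf} together with Proposition~\ref{prop:dcubic}. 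You instead work entirely inside $H^*(E;\mathbb{Q})$ with no fibre integration: the identity $e = (3x_i^2-B_i) + \sum_{j\neq i}(x_j-D_{ij})^2$ (which is essentially the pointwise form of Equations~(\ref{equ:fiterm})--(\ref{equ:fjterm}) that the paper only derives later, in the proof of Proposition~\ref{prop:intf2}), the annihilator relations $x_i\,y_{ij}=D_{ji}\,y_{ij}$ and $(x_j-D_{kj})(x_k-D_{jk})=0$, and the cubic relations reduce both $e^2$ and $\sum_m(3x_m^2-B_m)^2 + \lambda$ to the same normal form in the $y_{ij}$; the final coefficient match is exactly the identity $B_i+B_j = 3D_{ij}^2+3D_{ji}^2+\sum_{k\neq i,j}(D_{ik}-D_{jk})^2$, which follows from Equations~(\ref{equ:bg}) and~(\ref{equ:assoc2}) and also appears in the paper's proof of Proposition~\ref{prop:intf2}. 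I checked the individual reductions (i)--(iii) and the bookkeeping of the cross terms, including the use of Equation~(\ref{equ:assoc1}) to replace $(x_j-D_{kj})^2$ by $y_{ij}^2$ after multiplication by $(D_{jk}-D_{ik})^2$, and they are all valid; the final step, that $\lambda$ is independent of $i$ because $e^2-\sum_m(3x_m^2-B_m)^2$ is, is also fine since $\pi^*$ is injective. Your argument buys a cleaner, purely ring-theoretic proof that avoids $\omega$, the $J_i$ and the integration formula altogether, at the cost of introducing the auxiliary normal form in the $y_{ij}$; the paper's version is more computational but reuses machinery it needs anyway for Propositions~\ref{prop:tauteven} and~\ref{prop:tautodd}.
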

\begin{proof}
Throughout this proof we will make repeated use of Equations (\ref{equ:45}), (\ref{equ:46}), (\ref{equ:47}), (\ref{equ:48}) to compute cup products. We will also use Proposition \ref{prop:xcubed} to express third and higher powers of $x_i$ in terms of $x_i$ and $x_i^2$. In particular, we have that 
\[
x_i^4 = B_i x_i^2 + C_i x_i.
\]
We will also use Equations (\ref{equ:bg}) and (\ref{equ:cj}) to re-write any instances of $G_i$ or $J_i$ that appear in the calculation in terms of $B_i,C_i, D_{ij}, D_{ji}$.

From Proposition \ref{prop:euler} we have 
\[
e = 2\nu + x_1^2 + \cdots + x_n^2.
\]
Squaring both sides and simplifying, we find
\begin{align*}
e^2 &= 3 \sum_i B_i x_i^2 + 9 \sum_i C_i x_i + 3 \sum_{i,j | i \neq j } D_{ij}^2 D_{ji}^2 + 4 \omega \\
&= \sum_{i=1}^n (3x_i^2 - B_i)^2 + \lambda
\end{align*}
where
\[
\lambda = 3 \sum_{i,j | i \neq j} D_{ij}^2 D_{ji}^2 + 4 \omega - \sum_{i} B_i^2.
\]
It remains to show that $\lambda = -\sum_{j | j \neq i} (3D_{ij}^2 - B_j)^2$. To show this, we will calculate $\int_{E/B} e^2 x_i^2$ in two different ways. First, a direct computation gives
\begin{align*}
\int_{E/B} e^2 x_i^2 &= \int_{E/B} (2\nu + x_1^2 + \cdots + x_n^2)^2 x_i^2 \\
&= \int_{E/B} 4 \nu^2 x_i^2 + 4 \int_{E/B} \nu (x_1^2 + \cdots + x_n^2)x_i^2 + \int_{E/B} (x_1^2 + \cdots + x_n^2)^2 x_i^2.
\end{align*}
Now
\begin{align*}
\int_{E/B} \nu^2 x_i^2 &= \int_{E/B} (\nu x_i)^2 = \int_{E/B} (G_i x_i - \sum_{j | j \neq i} D_{ij} D_{ji} x_j + J_i)^2 \\
&= G_i^2 + \sum_{j | j \neq i} D_{ij}^2 D_{ji}^2
\end{align*}
and
\begin{align*}
\int_{E/B} \nu (x_1^2 + \cdots + x_n^2) x_i^2 &= \int_{E/B} \nu x_i^4 + \sum_{j | j \neq i} \int_{E/B} \nu x_j^2 x_i^2 \\
&= \int_{E/B} \nu( B_i x_i^2 + C_i x_i) + \sum_{j | j \neq i} \int_{E/B} \nu ( D_{ij} x_i + D_{ji} x_j - D_{ij} D_{ji})^2 \\
&= B_i G_i + \sum_{j | j \neq i} ( D_{ij}^2 G_i + D_{ji}^2 G_j - D_{ij}^2 D_{ji}^2 ).
\end{align*}
Also, from Proposition \ref{prop:intf}, we have
\begin{align*}
\int_{E/B} (x_1^2 + \cdots + x_n^2)^2 x_i^2 &= \int_{E/B} x_i^2( x_i^2 + \sum_{j | j \neq i} D_{ij}^2 )^2 + \sum_{j | j \neq i} D_{ji}^2 \int_{E/B} ( x_j^2 + \sum_{k | k \neq j} D_{jk}^2 )^2 \\
&= B_i^2 + 2 B_i \sum_{j | j \neq i} D_{ij}^2 + ( \sum_{j | j \neq i} D_{ij}^2)^2 + \sum_{j | j \neq i} D_{ji}^2 ( B_j + 2 \sum_{k | k \neq j} D_{jk}^2 ),
\end{align*}
where we used $\int_{E/B} x_i^4 = B_i$, $\int_{E/B} x_i^6 = B_i^2$ to get the last line. Putting these together, we find

\begin{align*}
& \int_{E/B} e^2 x_i^2 \\
&= \int_{E/B} 4 \nu^2 x_i^2 + 4 \int_{E/B} \nu (x_1^2 + \cdots + x_n^2)x_i^2 + \int_{E/B} (x_1^2 + \cdots + x_n^2)^2 x_i^2 \\
&= 4G_i^2 + 4\sum_{j | j \neq i} D_{ij}^2 D_{ji}^2 + 4B_i G_i + \sum_{j | j \neq i} ( 4D_{ij}^2 G_i + 4D_{ji}^2 G_j - 4D_{ij}^2 D_{ji}^2 ) \\
& \; \; \; \; + B_i^2 + 2 B_i \sum_{j | j \neq i} D_{ij}^2 + ( \sum_{j | j \neq i} D_{ij}^2)^2 + \sum_{j | j \neq i} D_{ji}^2 B_j + 2 \! \! \! \! \! \! \sum_{j , k | i,j,k \text{ distinct}}  \! \! \! \! \! \! D_{ji}^2 D_{jk}^2 + 2 \sum_{j | j \neq i } D_{ji}^4  \\
&= \left( B_i - \sum_{j | j \neq i } D_{ij}^2 \right)^2 + 4\sum_{j | j \neq i} D_{ij}^2 D_{ji}^2 + 2B_i^2 - 2B_i \sum_{j | j \neq i} D_{ij}^2 \\ 
& \; \; \; \; + 2 \sum_{j | j \neq i} D_{ij}^2 B_i  -2\sum_{j | j \neq i} \left( \sum_{k | k \neq i} D_{ij}^2 D_{ik}^2 \right) + 2\sum_{j | j \neq i}D_{ji}^2 B_j - 2\sum_{j | j \neq i}\left( \sum_{k | k \neq j} D_{ji}^2 D_{jk}^2 \right)  - 4 \sum_{j | j \neq i} D_{ij}^2 D_{ji}^2  \\
& \; \; \; \; + B_i^2 + 2 B_i \sum_{j | j \neq i} D_{ij}^2 + ( \sum_{j | j \neq i} D_{ij}^2)^2 + \sum_{j | j \neq i} D_{ji}^2 B_j + 2 \! \! \! \! \! \! \sum_{j , k | i,j,k \text{ distinct}}  \! \! \! \! \! \! D_{ji}^2 D_{jk}^2 + 2 \sum_{j | j \neq i } D_{ji}^4 
\end{align*}
\begin{align*}
&= \left( B_i - \sum_{j | j \neq i } D_{ij}^2 \right)^2 + 4\sum_{j | j \neq i} D_{ij}^2 D_{ji}^2 + 3B_i^2 \\ 
& \; \; \; \;  -2\sum_{j | j \neq i} \left( \sum_{k | k \neq i} D_{ij}^2 D_{ik}^2 \right)  + 3\sum_{j | j \neq i}D_{ji}^2 B_j - 2\sum_{j | j \neq i}\left( \sum_{k | k \neq j} D_{ji}^2 D_{jk}^2 \right)  - 4 \sum_{j | j \neq i} D_{ij}^2 D_{ji}^2  \\
& \; \; \; \; + 2 B_i \sum_{j | j \neq i} D_{ij}^2 + ( \sum_{j | j \neq i} D_{ij}^2)^2 + 2 \! \! \! \! \! \! \sum_{j , k | i,j,k \text{ distinct}}  \! \! \! \! \! \! D_{ji}^2 D_{jk}^2 + 2 \sum_{j | j \neq i } D_{ji}^4 \\
&= 4B_i^2 + 3\sum_{j | j \neq i}D_{ji}^2 B_j + 2(\sum_{j | j \neq i } D_{ij}^2)^2 + 4\sum_{j | j \neq i} D_{ij}^2 D_{ji}^2 \\ 
& \; \; \; \;  -2\sum_{j | j \neq i} \left( \sum_{k | k \neq i} D_{ij}^2 D_{ik}^2 \right)  - 2\sum_{j | j \neq i} \left( \sum_{k | k \neq j} D_{ji}^2 D_{jk}^2 \right) - 4 \sum_{j | j \neq i} D_{ij}^2 D_{ji}^2 + 2 \! \! \! \! \! \! \sum_{j , k | i,j,k \text{ distinct}}  \! \! \! \! \! \! D_{ji}^2 D_{jk}^2 + 2 \sum_{j | j \neq i } D_{ji}^4 \\
&= 4B_i^2 + 3\sum_{j | j \neq i}D_{ji}^2 B_j + 2 \! \! \! \! \! \!  \sum_{j,k | i,j,k \text{ distinct} } \! \! \! \! \! \!  D_{ij}^2 D_{ik}^2 + 2 \sum_{ j | j \neq i } D_{ij}^4 \\
& \; \; \; \; -2 \! \! \! \! \! \! \sum_{j,k | i,j,k \text{ distinct} } \! \! \! \! \! \! D_{ij}^2 D_{ik}^2 - 2 \sum_{j | j \neq i } D_{ij}^4 -2 \! \! \! \! \! \! \sum_{j,k | i,j,k \text{ distinct} } \! \! \! \! \! \! D_{ji}^2 D_{jk}^2 - 2 \sum_{j | j \neq i } D_{ji}^4 + 2 \! \! \! \! \! \! \sum_{j , k | i,j,k \text{ distinct}}  \! \! \! \! \! \! D_{ji}^2 D_{jk}^2 + 2 \sum_{j | j \neq i } D_{ji}^4  \\
&= 4B_i^2 + 3\sum_{j | j \neq i}D_{ji}^2 B_j.
\end{align*}

So we have shown
\begin{equation}\label{equ:eexx}
\int_{E/B} e^2 x_i^2 = 4B_i^2 + 3 \sum_{j | j \neq i} D_{ji}^2 B_j.
\end{equation}

Next, take the equality
\[
e^2 = \sum_{j=1}^n (3x_j^2 - B_j)^2 + \lambda = 3 \sum_j B_j x_j^2 + 9 \sum_j C_j x_j + \sum_j B_j^2 + \lambda,
\]
multiply both sides by $x_i^2$ and integrate to obtain
\begin{align*}
\int_{E/B} e^2 x_i^2 &= 3\int_{E/B} B_i x_i^4 + 3 \sum_{j | j \neq i} \int_{E/B} B_j x_i^2 x_j^2 + 9 \sum_{j | j \neq i} C_j D_{ij} + \sum_{j | j \neq i} B_j^2 + B_i^2 + \lambda \\
&= 4 B_i^2 + 3 \sum_{j | j \neq i} B_j \int_{E/B} x_i^2 x_j^2 + 9 \sum_{j | j \neq i} C_j D_{ij} + \sum_{j | j \neq i} B_j^2 + \lambda.
\end{align*}
From Proposition \ref{prop:intf}, we find
\[
\int_{E/B} x_i^2 x_j^2 = D_{ij}^2 + D_{ji}^2,
\]
so
\begin{align*}
\int_{E/B} e^2 x_i^2 &= 4 B_i^2 + \sum_{j | j \neq i} \left( 3B_jD_{ij}^2 + 3B_j D_{ji}^2 +9 C_j D_{ij} + B_j^2\right) + \lambda \\
&= 4 B_i^2 + \sum_{j | j \neq i} \left( 3B_j D_{ij}^2 + 3B_j D_{ji}^2 + 9 D_{ij}^4 - 9D_{ij}^2 B_j + B_j^2 \right) + \lambda \\
&= 4 B_i^2 + \sum_{j | j \neq i} \left( 9D_{ij}^4 - 6 D_{ij}^2 B_j + B_j^2 + 3B_j D_{ji}^2 \right) + \lambda \\
&= 4 B_i^2 + 3\sum_{j | j \neq i}  B_j D_{ji}^2+ \left( \sum_{j | j \neq i} ( 3D_{ij}^2 - B_j)^2 + \lambda \right),
\end{align*}
where we made use of Proposition \ref{prop:dcubic} to replace $C_j$ by $D_{ij}^3 - D_{ij} B_j$. Comparing with Equation (\ref{equ:eexx}), it follows that
\[
\lambda = - \sum_{j | j \neq i} ( 3D_{ij}^2 - B_j)^2.
\]
\end{proof}

The next result says that the tautological classes with even powers of $e$ can be written as a sum of $n$ terms where the $i$-th term only involves $x_i$.

\begin{proposition}\label{prop:tauteven}
For $i = 1, \dots , n$, we set
\[
p_1(i) = 3x_i^2 + 2B_i, \quad e(i) = 3x_i^2-B_i.
\]
Then for all $a,b \ge 0$ we have
\[
\kappa_{p_1^a e^{2b} } = \int_{E/B} p_1^a e^{2b} = \sum_{i=1}^n \int_{E/B} p_1(i)^a e(i)^{2b}.
\]
\end{proposition}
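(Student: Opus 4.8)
The plan is to write the characteristic class $p_1^a e^{2b}$ as a polynomial in the generators $x_1, \dots, x_n$ with coefficients in $H^{ev}(B;\mathbb{Q})$, and then feed this polynomial into the integration formula of Proposition~\ref{prop:intf}; the ``diagonalised'' right-hand side will then drop out after simplifying the substituted polynomial using Propositions~\ref{prop:bk} and~\ref{prop:e2}.

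Concretely, first I would use Proposition~\ref{prop:p1} to write $p_1 = 3(x_1^2 + \cdots + x_n^2) + \mu$ and Proposition~\ref{prop:e2} to write $e^2 = \sum_{k=1}^n (3x_k^2 - B_k)^2 + \lambda$. Since $e^{2b} = (e^2)^b$, we have $p_1^a e^{2b} = f(x_1, \dots, x_n)$ for the polynomial
\[
f(t_1, \dots, t_n) = \Bigl( 3\textstyle\sum_{k=1}^n t_k^2 + \mu \Bigr)^a \Bigl( \textstyle\sum_{k=1}^n (3t_k^2 - B_k)^2 + \lambda \Bigr)^b,
\]
which has coefficients in $H^{ev}(B;\mathbb{Q})$. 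Applying Proposition~\ref{prop:intf} gives $\int_{E/B} p_1^a e^{2b} = \sum_{j=1}^n \int_{E/B} f_j(x_j)$, where $f_j$ is obtained from $f$ by the substitution $t_k \mapsto D_{jk}$ for $k \neq j$. In the first factor of $f_j(x_j)$ this substitution produces $3x_j^2 + 3\sum_{k \neq j} D_{jk}^2 + \mu$, and Proposition~\ref{prop:bk} (applied with index $j$) gives $3\sum_{k \neq j} D_{jk}^2 + \mu = 2B_j$, so the first factor becomes $(3x_j^2 + 2B_j)^a = p_1(j)^a$. In the second factor it produces $(3x_j^2 - B_j)^2 + \sum_{k \neq j} (3D_{jk}^2 - B_k)^2 + \lambda$, and the identity $\lambda = -\sum_{k \neq j}(3D_{jk}^2 - B_k)^2$ from Proposition~\ref{prop:e2} (again with index $j$) makes the second factor equal to $\bigl((3x_j^2 - B_j)^2\bigr)^b = e(j)^{2b}$. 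Hence $f_j(x_j) = p_1(j)^a e(j)^{2b}$, and summing over $j$ yields the claimed formula, noting that $\kappa_{p_1^a e^{2b}} = \int_{E/B} p_1^a e^{2b}$ by definition of the tautological classes.

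The step I expect to require the most care --- though it is not really an obstacle --- is the simplification of $f_j(x_j)$: one must use that the identities of Propositions~\ref{prop:bk} and~\ref{prop:e2} hold uniformly over the free index, so that inside the $j$-th summand one may invoke the instance attached to $j$. This uniformity is precisely what causes the off-diagonal terms $D_{jk}$ to be absorbed into $B_j$ and $\lambda$, leaving only the one-variable integrals $\int_{E/B} p_1(j)^a e(j)^{2b}$. Beyond this the proof is pure bookkeeping, since Proposition~\ref{prop:intf} already supplies the diagonalisation mechanism, and the restriction to even powers of $e$ is exactly what allows us to work with the stable class $e^2$ in the first step.
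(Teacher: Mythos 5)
Your proposal is correct and follows essentially the same route as the paper: express $p_1$ and $e^2$ as the polynomials $P_1$, $P_2$ in $x_1,\dots,x_n$ via Propositions \ref{prop:p1} and \ref{prop:e2}, apply the integration formula of Proposition \ref{prop:intf} to $P_1^a P_2^b$, and then use Proposition \ref{prop:bk} and the identity $\lambda = -\sum_{k\neq j}(3D_{jk}^2-B_k)^2$ to recognise the diagonalised factors as $p_1(j)^a$ and $e(j)^{2b}$. No gaps.
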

\begin{proof}
Consider the polynomials
\[
P_1(t_1, \dots , t_n) = 3(t_1^2 + \cdots + t_n^2) + \mu
\]
and
\[ 
P_2(t_1 , \dots , t_n ) = \sum_{i=1}^n (3t_i^2-B_i)^2 + \lambda.
\]
Then
\[
p_1 = P_1(x_1 , \dots , x_n)
\]
by Proposition \ref{prop:p1} and
\[
e^2 = P_2(x_1, \dots , x_n)
\]
by Proposition \ref{prop:e2}. Therefore
\[
\kappa_{p_1^a e^{2b} } = \int_{E/B} p_1^a e^{2b} = \int_{E/B} P_1(x_1, \dots , x_n)^a P_2(x_1 , \dots , x_n)^{b}.
\]
Next we find that
\begin{align*}
(P_1)_i(x_i) &= P_1( D_{i1} , \dots , D_{i (i-1)} , x_i , D_{i (i+1)} , \dots , D_{i n}) \\
&= 3x_i^2 + 3\sum_{j | j \neq i} D_{ij}^2 + \mu \\
&= 3x_i^2 + 2B_i \\
&= p_1(i),
\end{align*}
where the second to last equality was obtained using Proposition \ref{prop:bk}.\\

Also
\begin{align*}
(P_2)_i(x_i) &= P_1( D_{i1} , \dots , D_{i (i-1)} , x_i , D_{i (i+1)} , \dots , D_{i n}) \\
&= (3x_i^2-B_i)^2 + \sum_{ j | j \neq i } (3D_{ij}^2 - B_j)^2 + \lambda \\
&= (3x_i^2-B_i)^2 \\
&= e(i)^2
\end{align*}
where we used Proposition \ref{prop:e2}. Combining these results with the integration formula (Proposition \ref{prop:intf}), we get:
\begin{align*}
\kappa_{p_1^a e^{2b} } &= \int_{E/B} P_1(x_1, \dots, x_n)^a P_2(x_1 , \dots , x_n)^{b} \\
&= \sum_{j=1}^n \int_{E/B} (P_1)_j^a(x_i) (P_2)_j^{b}(x_i) \\
&= \sum_{j=1}^n \int_{E/B} p_1(i)^{a} e(i)^{2b}.
\end{align*}
\end{proof}

Next we would like to consider tautological classes with an odd power of $e$. Since Proposition \ref{prop:e2} lets us write $e^2$ in terms of the $x_i$, we just need to consider integrands of the form $f(x_1 , \dots , x_n)e$ where $f$ is a polynomial. The next proposition gives a formula for the evaluation of such integrals.

\begin{proposition}[Second integration formula]\label{prop:intf2}
Let $f( t_1 , \dots , t_n)$ be a polynomial in $t_1, \dots , t_n$ with coefficients in $H^{ev}(B ; \mathbb{Q})$. Then for each $i \in \{1 , \dots , n\}$, we have
\begin{equation}\label{equ:intformula2}
\int_{E/B} f(x_1 , \dots , x_n)e = \sum_{j=1}^n \int_{E/B} f_j(x_j) e(j) - 2 \sum_{j | j \neq i} f_j(D_{ij}).
\end{equation}
\end{proposition}
\begin{proof}

Fix $i \in \{1, \dots , n\}$. Then from 
\[
x_i^2 = \nu + \sum_{j | j \neq i } D_{ij}x_j + G_i,
\]
we get
\[
\int_{E/B} f(x_1 , \dots , x_n) \nu = \int_{E/B} f(x_1 , \dots , x_n ) ( x_i^2 - \sum_{j | j \neq i } D_{ij} x_j - G_i ).
\]
The integrand on the right hand side is a polynomial in $x_1, \dots , x_n$ with coefficients in $H^{ev}(B ; \mathbb{Q})$, so from Proposition \ref{prop:intf}, we have
\begin{equation}\label{equ:fint1}
\begin{aligned}
\int_{E/B} f(x_1 , \dots , x_n) \nu &= \int_{E/B} f_i(x_i)( x_i^2 - \sum_{j | j \neq i} D_{ij}^2 - G_i) \\
& \; \; \; \; + \sum_{j | j \neq i} f_j(x_j) ( D_{ji}^2 - D_{ij}x_j - \sum_{k | k \neq i,j} D_{ik}D_{jk} - G_i).
\end{aligned}
\end{equation}
Using Proposition \ref{prop:intf} again, we also have
\begin{equation}\label{equ:fint2}
\begin{aligned}
\int_{E/B} f(x_1 , \dots , x_n) ( x_1^2 + \cdots + x_n^2) &= \int_{E/B} f_i(x_i)( x_i^2 + \sum_{j | j \neq i} D_{ij}^2 ) + \sum_{j | j \neq i} \int_{E/B} f_j(x_j)( x_j^2 + \sum_{k | k \neq j } D_{jk}^2)
\end{aligned}
\end{equation}

Now from Proposition \ref{prop:euler}, we have $e = 2\nu + x_1^2 + \cdots + x_n^2$. Together with Equations (\ref{equ:fint1}) and (\ref{equ:fint2}), this implies
\begin{equation}\label{equ:fe}
\begin{aligned}
\int_{E/B} f(x_1 , \dots , x_n) e &= \int_{E/B} f(x_1 , \dots , x_n)( 2\nu + x_1^2 + \cdots + x_n^2) \\
&= \int_{E/B} f_i(x_i) ( 3x_i^2 - 2G_i - \sum_{j | j \neq i} D_{ij}^2) \\ 
& \; \; \; \; + \sum_{j | j \neq i} \int_{E/B} f_j(x_j) ( x_j^2 + 3D_{ji}^2 - 2G_i - 2 \sum_{k | k \neq i,j} D_{ik}D_{jk} + \sum_{k|k \neq i,j} D_{jk}^2 - 2D_{ij} x_j)
\end{aligned}
\end{equation}
Note that
\begin{equation}\label{equ:fiterm}
3x_i^2 - 2G_i - \sum_{j | j \neq i} D_{ij}^2 = 3x_i^2 - B_i = e(i).
\end{equation}
Also
\begin{equation}\label{equ:xjsquared}
\begin{aligned}
& x_j^2 + 3D_{ji}^2 - 2G_i - 2 \sum_{k | k \neq i,j} D_{ik}D_{jk} + \sum_{k|k \neq i,j} D_{jk}^2 - 2D_{ij} x_j \\
&\; \; \; \; = x_j^2 - 2D_{ij}x_j + 3D_{ji}^2 - B_i + D_{ij}^2 + \sum_{k | k \neq i,j} ( D_{ik}^2 - 2D_{ik}D_{jk} + D_{jk}^2).
\end{aligned}
\end{equation}
Recall Equation (\ref{equ:assoc2}):
\[
G_i + G_j + \sum_{k| k \neq i,j} D_{ik}D_{jk} = D_{ij}^2 + D_{ji}^2.
\]
Combining this with $B_i = 2G_i + \sum_{j | j \neq i } D_{ij}^2$, we have
\[
B_i + B_j = 3D_{ij}^2 + 3D_{ji}^2 + \sum_{k | k \neq i,j} ( D_{ik}^2 - 2D_{ik}D_{jk} + D_{jk}^2).
\]
Substituting this into (\ref{equ:xjsquared}), we get that
\begin{equation}\label{equ:fjterm}
\begin{aligned}
& x_j^2 + 3D_{ji}^2 - 2G_i - 2 \sum_{k | k \neq i,j} D_{ik}D_{jk} + \sum_{k|k \neq i,j} D_{jk}^2 - 2D_{ij} x_j \\
& = x_j^2 -2D_{ij}x_j -2D_{ij}^2+B_j \\
& = e(j) -2( x_j^2 + D_{ij} x_j + D_{ij}^2-B_j ).
\end{aligned}
\end{equation}
Substituting Equations (\ref{equ:fiterm}) and (\ref{equ:fjterm}) into Equation (\ref{equ:fe}), we get
\begin{equation}\label{equ:fe2}
\begin{aligned}
\int_{E/B} f(x_1, \dots , x_n) e &= \int_{E/B} f_i(x_i)e(i) + \sum_{j | j \neq i} \int_{E/B} f_j(x_j)( e(j) -2( x_j^2 + D_{ij} x_j + D_{ij}^2-B_j ) ) \\
&= \sum_{j = 1}^n \int_{E/B} f_j(x_j) e(j) - 2 \sum_{j | j \neq i} \int_{E/B} f_j(x_j) ( x_j^2 + D_{ij} x_j + D_{ij}^2-B_j).
\end{aligned}
\end{equation}
Now we claim that for any integer $m \ge 0$, we have
\[
\int_{E/B} x_j^m ( x_j^2 + D_{ij} x_j + D_{ij}^2 - B_j ) = D_{ij}^m.
\]
We prove this by induction on $m$. For $m=0,1$ this is obvious. For $m=2$, we have
\[
\int_{E/B} x_j^2( x_j^2 + D_{ij} x_j + D_{ij}^2 - B_j ) = \left( \int_{E/B} x_j^4 \right) + D_{ij}^2-B_j = B_j + D_{ij}^2 - B_j = D_{ij}^2.
\]
Now suppose $m \ge 3$ and that the result holds for all $m' \le m$. Then since $x_j^3 = B_j x_j + C_j$, and $D_{ij}^3 = B_j D_{ij} + C_j$, we find
\begin{align*}
& \int_{E/B} x_j^m ( x_j^2 + D_{ij}x_j + D_{ij}-B_j) \\
&= \int_{E/B} x_j^{m-3} (B_j x_j + C_j)( x_j^2 + D_{ij}x_j + D_{ij}-B_j) \\
&= B_j \int_{E/B} x_j^{m-2}( x_j^2 + D_{ij}x_j + D_{ij}-B_j) + C_j \int_{E/B} x_j^{m-3}( x_j^2 + D_{ij}x_j + D_{ij}-B_j) \\
&= B_j D_{ij}^{m-2} + C_j D_{ij}^{m-3} \\
&= (B_j D_{ij} + C_j) D_{ij}^{m-3} \\
&= D_{ij}^m,
\end{align*}
which completes the induction. As a consequence, it follows that
\[
\int_{E/B}  f_j(x_j) ( x_j^2 + D_{ij} x_j + D_{ij}^2-B_j) = f_j(D_{ij}).
\]
Applying this to Equation (\ref{equ:fe2}), we get
\[
\int_{E/B} f(x_1 , \dots , x_n) e = \sum_{j=1}^n \int_{E/B} f_j(x_j) e(j) - 2 \sum_{j | j \neq i} f_j( D_{ij} ).
\]
\end{proof}

The next result is the counterpart of Proposition \ref{prop:tauteven} for odd powers of $e$.

\begin{proposition}\label{prop:tautodd}
For all $a,b$, we have
\[
\kappa_{p_1^a e^{2b+1} } = \int_{E/B} p_1^a e^{2b+1} = \sum_{i=1}^n \int_{E/B} p_1(i)^a e(i)^{2b} - 2 \sum_{j | j \neq i} (3D_{ij}^2+2B_j)^a (3D_{ij}^2-B_j)^{2b}.
\]
\end{proposition}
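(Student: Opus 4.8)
The plan is to deduce this from the second integration formula (Proposition~\ref{prop:intf2}) in exactly the way that Proposition~\ref{prop:tauteven} was deduced from the first integration formula (Proposition~\ref{prop:intf}); the single remaining factor of $e$ is precisely what Proposition~\ref{prop:intf2} is built to absorb. First I would recall from Propositions~\ref{prop:p1} and~\ref{prop:e2} that, on setting
\[
P_1(t_1,\dots,t_n)=3(t_1^2+\cdots+t_n^2)+\mu,\qquad P_2(t_1,\dots,t_n)=\sum_{k=1}^n(3t_k^2-B_k)^2+\lambda,
\]
one has $p_1=P_1(x_1,\dots,x_n)$ and $e^2=P_2(x_1,\dots,x_n)$ in $H^*(E;\mathbb{Q})$. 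Hence, putting $f=P_1^{a}P_2^{b}$, we get $p_1^{a}e^{2b}=f(x_1,\dots,x_n)$ and therefore
\[
\kappa_{p_1^{a}e^{2b+1}}=\int_{E/B}p_1^{a}e^{2b+1}=\int_{E/B}f(x_1,\dots,x_n)\,e .
\]

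Next I would apply Proposition~\ref{prop:intf2} to this polynomial $f$ with respect to any fixed index $i$, which gives
\[
\int_{E/B}f(x_1,\dots,x_n)\,e=\sum_{j=1}^{n}\int_{E/B}f_j(x_j)\,e(j)\;-\;2\sum_{j|j\neq i}f_j(D_{ij}).
\]
It then remains only to identify the one-variable polynomials $f_j$, and this was essentially already done inside the proof of Proposition~\ref{prop:tauteven}. Since the substitution $g\mapsto g_j$ (plugging $t_k=D_{jk}$ into all slots $k\neq j$) is a ring homomorphism in the retained variable, $f_j=(P_1)_j^{a}(P_2)_j^{b}$; and there one finds $(P_1)_j(x_j)=3x_j^2+2B_j=p_1(j)$ (via Proposition~\ref{prop:bk}) and $(P_2)_j(x_j)=(3x_j^2-B_j)^2=e(j)^2$ (via Proposition~\ref{prop:e2}). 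Thus $f_j(x_j)=p_1(j)^{a}e(j)^{2b}$, so $f_j(x_j)\,e(j)=p_1(j)^{a}e(j)^{2b+1}$, while evaluating the one-variable polynomial $f_j$ at $D_{ij}$ gives $f_j(D_{ij})=(3D_{ij}^2+2B_j)^{a}(3D_{ij}^2-B_j)^{2b}$. Substituting these into the displayed identity yields the asserted formula, the first summand being $\sum_{j}\int_{E/B}p_1(j)^{a}e(j)^{2b+1}$, in agreement with Theorem~\ref{thm:tautclasses}(2).

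I do not expect a genuine obstacle: all of the real content has already been packaged into Propositions~\ref{prop:intf2} and~\ref{prop:tauteven}, and what is left is bookkeeping. The one point that needs care is keeping track of the twofold dependence on $j$ — through which variable is retained and through the coefficients $B_j,C_j$ entering $p_1(j)$ and $e(j)$ — and keeping the index $i$ in the correction term $-2\sum_{j|j\neq i}(\cdots)$ equal to the index $i$ fixed when invoking Proposition~\ref{prop:intf2}. Note that, since the left-hand side is manifestly independent of $i$, the resulting formula in fact furnishes $n$ a priori distinct expressions for $\kappa_{p_1^{a}e^{2b+1}}$, all of which must coincide.
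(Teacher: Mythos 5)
Your proposal is correct and follows essentially the same route as the paper: apply Proposition~\ref{prop:intf2} to $f=P_1^aP_2^b$ and identify $f_j(x_j)=p_1(j)^a e(j)^{2b}$ and $f_j(D_{ij})=(3D_{ij}^2+2B_j)^a(3D_{ij}^2-B_j)^{2b}$ using the computations already made for Proposition~\ref{prop:tauteven}. You also correctly arrive at $e(j)^{2b+1}$ in the first summand (consistent with Theorem~\ref{thm:tautclasses}(2)), where the displayed statement of the proposition has a typographical $e(i)^{2b}$.
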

\begin{proof}
As in the proof of Proposition \ref{prop:tauteven}, we write
\[
p_1 = P_1(x_1 , \dots , x_n)
\]
and
\[
e^2 = P_2(x_1, \dots , x_n)
\]
where
\[
P_1(t_1, \dots , t_n) = 3(t_1^2 + \cdots + t_n^2) + \mu
\]
and
\[ 
P_2(t_1 , \dots , t_n ) = \sum_{i=1}^n (3t_i^2-B_i)^2 + \lambda.
\]
Then we apply Proposition \ref{prop:intf2} to $f(x_1 , \dots , x_n) = P_1(x_1, \dots , x_n)^a P_2(x_1 , \dots , x_n)^b$ to obtain
\begin{align*}
\int_{E/B} p_1^a e^{2b+1} &= \int_{E/B} f(x_1 , \dots , x_n) e \\
&= \sum_{j=1}^n \int_{E/B} p_1(j)^a e(j)^{2b+1} - 2 \sum_{j | j \neq i} p_j(D_{ij})^a e_j(D_{ij})^{2b} \\
&= \sum_{j=1}^n \int_{E/B} p_1(j)^a e(j)^{2b+1} - 2 \sum_{j | j \neq i} (3D_{ij}^2+2B_j)^a (3D_{ij}^2-B_j)^{2b}.
\end{align*}

\end{proof}

In the next result, we show that $\mu$ can be written in terms of tautological classes.

\begin{proposition}\label{prop:eep1e}
We have
\[
\kappa_{e^2} = 3\sum_{j=1}^n B_j \text{ and } \kappa_{p_1 e} = 8\kappa_{e^2}/3 + 2\mu.
\]
Therefore
\[
\mu = \dfrac{1}{2} \kappa_{p_1 e} - \dfrac{4}{3} \kappa_{e^2}.
\]
\end{proposition}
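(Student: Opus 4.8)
The plan is to read off both classes from the two integration formulas already established, Propositions~\ref{prop:tauteven} and \ref{prop:intf2}, converting the sums $\sum_{j | j\neq i} D_{ij}^2$ into expressions in $B_i$ and $\mu$ by means of Proposition~\ref{prop:bk}.

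First I would handle $\kappa_{e^2}$. Taking $a=0$, $b=1$ in Proposition~\ref{prop:tauteven} gives
\[
\kappa_{e^2} = \sum_{i=1}^n \int_{E/B} e(i)^2 = \sum_{i=1}^n \int_{E/B} (3x_i^2-B_i)^2 = \sum_{i=1}^n \int_{E/B} \left( 9x_i^4 - 6B_i x_i^2 + B_i^2 \right).
\]
Since $x_i^3 = B_i x_i + C_i$ by Proposition~\ref{prop:xcubed}, we get $\int_{E/B} x_i^4 = B_i \int_{E/B} x_i^2 + C_i \int_{E/B} x_i = B_i$, while $\int_{E/B} x_i^2 = 1$ and $\int_{E/B} 1 = 0$; hence each summand equals $9B_i - 6B_i = 3B_i$, so $\kappa_{e^2} = 3\sum_{j=1}^n B_j$.

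Next I would compute $\kappa_{p_1 e}$ by applying the second integration formula (Proposition~\ref{prop:intf2}) to the polynomial $f(t_1,\dots,t_n) = 3(t_1^2+\cdots+t_n^2)+\mu$, so that $f(x_1,\dots,x_n) = p_1$ by Proposition~\ref{prop:p1}. For each $j$ one has $f_j(t_j) = 3t_j^2 + 3\sum_{k | k\neq j} D_{jk}^2 + \mu$, and Proposition~\ref{prop:bk}, which states $3\sum_{k | k\neq j}D_{jk}^2 + \mu = 2B_j$, reduces this to $f_j(t_j) = 3t_j^2 + 2B_j = p_1(j)$. Thus $f_j(x_j) = p_1(j)$ and $f_j(D_{ij}) = 3D_{ij}^2 + 2B_j$, so for any fixed $i$ Proposition~\ref{prop:intf2} yields
\[
\kappa_{p_1 e} = \sum_{j=1}^n \int_{E/B} p_1(j) e(j) - 2\sum_{j | j\neq i} \left( 3D_{ij}^2 + 2B_j \right).
\]
Expanding $p_1(j)e(j) = (3x_j^2+2B_j)(3x_j^2-B_j) = 9x_j^4 + 3B_j x_j^2 - 2B_j^2$ and integrating as before gives $\int_{E/B} p_1(j)e(j) = 9B_j + 3B_j = 12B_j$, so the first sum equals $12\sum_j B_j = 4\kappa_{e^2}$. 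For the second sum I would again invoke Proposition~\ref{prop:bk}, which gives $3\sum_{j | j\neq i}D_{ij}^2 = 2B_i - \mu$, so that $\sum_{j | j\neq i}(3D_{ij}^2+2B_j) = (2B_i - \mu) + 2\sum_{j | j\neq i}B_j = 2\sum_{j=1}^n B_j - \mu = \tfrac{2}{3}\kappa_{e^2} - \mu$.

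Putting these together, $\kappa_{p_1 e} = 4\kappa_{e^2} - 2\big( \tfrac{2}{3}\kappa_{e^2} - \mu \big) = \tfrac{8}{3}\kappa_{e^2} + 2\mu$, and solving for $\mu$ gives $\mu = \tfrac{1}{2}\kappa_{p_1 e} - \tfrac{4}{3}\kappa_{e^2}$. I do not expect a genuine obstacle here: the argument is a bookkeeping exercise resting on the two integration formulas together with two applications of Proposition~\ref{prop:bk}. The one point worth flagging is that the free index $i$ in the correction term disappears once Proposition~\ref{prop:bk} is applied, which reconfirms that the right-hand side of Proposition~\ref{prop:intf2} is indeed independent of the choice of $i$, as it must be.
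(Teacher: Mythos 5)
Your proposal is correct and follows essentially the same route as the paper: the $\kappa_{e^2}$ computation via Proposition~\ref{prop:tauteven} is identical, and your direct application of Proposition~\ref{prop:intf2} to $f=P_1$ is exactly what the paper's citation of Proposition~\ref{prop:tautodd} with $(a,b)=(1,0)$ amounts to, with the same two uses of Proposition~\ref{prop:bk} to eliminate $\sum_{j\neq i}D_{ij}^2$. Your closing observation that the $i$-dependence cancels is a nice sanity check but not a new ingredient.
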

\begin{proof}
From Proposition \ref{prop:tauteven}, we have
\begin{align*}
\kappa_{e^2} &= \sum_{j=1}^n \int_{E/B} (3x_j^2 - B_j)^2 \\
&= \sum_{j=1}^n \int_{E/B} ( 9x_j^4 - 6 B_j x_j^2 + B_j^2 ) \\
&= \sum_{j=1}^n (9B_j - 6B_j) = 3 \sum_{j=1}^n B_j.
\end{align*}

From Proposition \ref{prop:tautodd}, we have
\begin{align*}
\kappa_{p_1 e } &= \sum_{j=1}^n \int_{E/B} (3x_j^2+2B_j)(3x_j^2-B_j) - 2 \sum_{j | j \neq i} (3D_{ij}^2+2B_j) \\
&= \sum_{j=1}^n \int_{E/B}( 9x_j^4 + 3B_j x_j^2 - 2B_j^2) - 6\sum_{j | j \neq i} D_{ij}^2  - 4 \sum_{j | j \neq i} B_j \\
&= \sum_{j=1}^n 12 B_j - 4 \sum_{j=1}^n B_j + 4B_i -4B_i + 2\mu \\
&= 8 \sum_{j=1}^n B_j + 2\mu,
\end{align*}
where the second to last equality follows from Proposition \ref{prop:bk}. Therefore
\[
\kappa_{p_1 e} = 8 \sum_{j=1}^n B_j + 2\mu = 8\kappa_{e^2}/3 + 2\mu.
\]
\end{proof}

Let $U_X \to BDiff(X)$ denote the universal family $U_X = EDiff(X) \times_{Diff(X)} X$ over $BDiff(X)$ and let $\overline{U}_X = p^*(U_X)$ be the pullback of the universal family to $\overline{BDiff}(X)$. Then as in Section \ref{sec:families}, the cohomology ring $H^*(\overline{U}_X ; \mathbb{Q})$ is generated over $H^*( \overline{BDiff}(X) ; \mathbb{Q})$ by classes $x_1 , \dots , x_n \in H^2( \overline{U}_X ; \mathbb{Q})$. By the universal coefficient theorem, rational cohomology classes of $\overline{BDiff}(X)$ are detected by their evaluation on integral homology classes. By a result of Thom \cite{tho}, for any integral homology class $x \in H_k( \overline{BDiff}(X) ; \mathbb{Z} )$, there is a non-zero integer $N$ such that $Nx$ is the push-forward of the fundamental class of a compact, oriented smooth manifold $M$ of dimension $k$ under a continuous map $f : M \to \overline{BDiff}(X)$. Hence rational cohomology classes of $\overline{BDiff}(X)$ are detected by continuous maps from compact smooth manifolds into $\overline{BDiff}(X)$. From this it follows that all of the results in Sections \ref{sec:cohomring} and \ref{sec:tautclasses} for smooth, compact families carry over to $\overline{U}_X \to \overline{BDiff}(X)$. In particular, there are classes
\begin{align*}
D_{ij} &\in H^2( \overline{BDiff}(X) ; \mathbb{Q}), \\
G_i &\in H^4( \overline{BDiff}(X) ; \mathbb{Q}), \\
J_i &\in H^6( \overline{BDiff}(X) ; \mathbb{Q}), \\
\omega &\in H^8( \overline{BDiff}(X) ; \mathbb{Q}),
\end{align*}
such that
\begin{equation*}
\begin{aligned}
x_i x_j &= D_{ij}x_i + D_{ji}x_j -D_{ij}D_{ji}, \\
x_i^2 & = \nu + \sum_{j | j\neq i} D_{ij} x_j + G_i, \\
x_i \nu &= G_i x_i - \sum_{j | j \neq i} D_{ij} D_{ji} x_j + J_i, \\
\nu^2 &= \sum_j J_j x_j + \omega.
\end{aligned}
\end{equation*}
We also define $\mu \in H^4( \overline{BDiff}(X) ; \mathbb{Q})$ to be given by $\mu = \frac{1}{2} \kappa_{p_1 e} - \frac{4}{3} \kappa_{e^2}$.

We also have
\[
x_i^3 = B_i x_i + C_i,
\]
where
\begin{align*}
B_i &= 2G_i + \sum_{j | j \neq i } D_{ij}^2 \in H^2(\overline{BDiff}(X) ; \mathbb{Q}), \\
C_i &= J_i - \sum_{j | j \neq i} D_{ij}^2 D_{ji} \in H^3(\overline{BDiff}(X) ; \mathbb{Q}).
\end{align*}

The classes $x_1, \dots , x_n$ (and therefore also the classes $D_{ij}, G_i, J_i, \omega, B_i, C_i$) depend on a choice of framing $\xi_1, \dots , \xi_n$ for the family $\overline{U}_X$. Recall that the group
\[
W_n = S_n \ltimes \mathbb{Z}_2^n
\]
acts on the set of framings by permutations and sign changes. The group $W_n$ acts on $\overline{BDiff}(X)$ and $\overline{U}_X$ on the right inducing left actions on $H^*( \overline{BDiff}(X) ; \mathbb{Q})$ and $H^*(\overline{U}_X ; \mathbb{Q})$. This action corresponds to a change of framing, in particular it follows that
\[
\sigma(x_i) = x_{\sigma(i)} \text{ for } \sigma \in S_n
\]
and
\[
\theta_i (x_j) = \begin{cases} x_j & j \neq i, \\ -x_j & j = i. \end{cases}
\]
It also follows that $W_n$ acts on all of the associated classes $D_{ij}, G_i, J_i, \omega, B_i, C_i$. Noting that
\[
D_{ij} = \int_{E_X/B} x_i^2 x_j,
\]
one finds that the action of $W_n$ on the $D_{ij}$ is given by
\[
\sigma( D_{ij} ) = D_{ \sigma(i) \sigma(j) }, \quad \sigma \in S_n
\]
and
\[
\theta_k ( D_{ij} ) = \begin{cases} D_{ij} & k \neq j, \\ -D_{ij} & k=j. \end{cases}
\]

\begin{definition}
Let $X$ be a smooth, compact, simply-connected, positive definite $4$-manifold with $b_2(X)=n \ge 1$. We denote by $D^*(X)$ the subring of $H^*( \overline{BDiff}(X) ; \mathbb{Q})$ generated by the $D_{ij}$. Note that $W_n$ acts on $D^*(X)$ by ring automorphisms. We let $I^*(X) \subseteq D^*(X)$ denote the $W_n$-invariant subring of $D^*(X)$.
\end{definition}

\begin{remark}\label{rem:invariantbdiff}
Recall that the $W_n$-invariant subring of $H^*( \overline{BDiff}(X) ; \mathbb{Q})$ is $H^*( BDiff(X) ; \mathbb{Q})$. Therefore, $I^*(X)$ may be identified with a subring of $H^*(BDiff(X) ; \mathbb{Q})$
\[
I^*(X) \subseteq H^*( BDiff(X) ; \mathbb{Q}).
\]
\end{remark}

\begin{lemma}
We have that $I^k(X)$ is non-zero only if $k$ is a multiple of $4$. Moreover, $I^4(X)$ is spanned by $I_1, I_2$, where
\[
I_1 = \sum_{i,j | i \neq j} D_{ij}^2, \quad I_2 =  \sum_{i,j,k | i,j,k \text{ distinct}} D_{ik} D_{jk}.
\]
\end{lemma}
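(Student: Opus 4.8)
\emph{Proof proposal.} The plan is to transfer the computation to a polynomial ring where monomials form an honest basis, and then do an orbit count. Let $R = \mathbb{Q}[D_{ij} : 1 \le i,j \le n,\ i \neq j]$ be the free graded $\mathbb{Q}$-algebra on symbols $D_{ij}$ of degree $2$, equipped with the $W_n$-action $\sigma(D_{ij}) = D_{\sigma(i)\sigma(j)}$ for $\sigma \in S_n$ and $\theta_k(D_{ij}) = -D_{ij}$ if $k=j$, $=D_{ij}$ otherwise, as in Theorem~\ref{thm:tautring}(i). There is a surjective, $W_n$-equivariant, graded ring homomorphism $R \to D^*(X)$. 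Since $W_n$ is finite and we are in characteristic zero, the Reynolds operator $r \mapsto \tfrac{1}{|W_n|}\sum_{g \in W_n} g\cdot r$ shows this map restricts to a surjection $R^{W_n} \twoheadrightarrow D^*(X)^{W_n} = I^*(X)$. Hence it suffices to establish both assertions for $R^{W_n}$.

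First I would analyse the $\mathbb{Z}_2^n$-action. Since $\mathbb{Z}_2^n$ is abelian, $R$ decomposes into character-isotypic pieces, each monomial lying in a single piece, so $R^{\mathbb{Z}_2^n}$ is spanned by the $\theta$-invariant monomials. A degree-$2m$ monomial $D_{i_1 j_1}\cdots D_{i_m j_m}$ is sent by $\theta_k$ to $(-1)^{\#\{a : j_a = k\}}$ times itself, so it is $\mathbb{Z}_2^n$-invariant iff every count $\#\{a : j_a = k\}$ is even; summing over $k$ forces $m$ even, hence $2m \equiv 0 \pmod 4$. Thus $R^{\mathbb{Z}_2^n}$, and a fortiori $R^{W_n}\subseteq R^{\mathbb{Z}_2^n}$, vanishes outside degrees divisible by $4$, and by the surjection above the same holds for $I^*(X)$.

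For the degree-$4$ part, take $m=2$: a $\theta$-invariant monomial $D_{i_1 j_1}D_{i_2 j_2}$ must have $j_1 = j_2 =: j$, and then, since $D_{ij}$ requires $i\neq j$, it is either $D_{ij}^2$ with $i \neq j$ or $D_{ij}D_{kj}$ with $i,j,k$ distinct. So $R^{\mathbb{Z}_2^n}$ in degree $4$ has basis $\{D_{ij}^2\}_{i\neq j} \cup \{D_{ij}D_{kj}\}_{i,j,k \text{ distinct}}$. The group $S_n$ acts transitively on each of the two subsets, so its invariants in this degree are spanned by the two orbit sums, which are exactly $I_1 = \sum_{i\neq j} D_{ij}^2$ and, up to the factor $2$ arising from $D_{ik}D_{jk} = D_{jk}D_{ik}$, the class $I_2 = \sum_{i,j,k \text{ distinct}} D_{ik}D_{jk}$. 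Both $I_1$ and $I_2$ are manifestly $W_n$-invariant, so $R^{W_n}$ in degree $4$ equals $\operatorname{span}(I_1,I_2)$, and applying the surjection $R^{W_n}\twoheadrightarrow I^*(X)$ gives the result.

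There is no serious obstacle: the only point needing care is the passage from the possibly non-free ring $D^*(X)$ to the polynomial ring $R$, which the averaging argument handles; thereafter everything is a direct degree-and-orbit count. Alternatively, one can bypass $R$ by noting that taking $W_n$-invariants is exact in characteristic zero, so $I^*(X)$ is the image of the span of the $W_n$-invariant monomials in $D^*(X)$, and run the identical analysis inside $D^*(X)$.
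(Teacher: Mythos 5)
Your proof is correct and follows essentially the same argument as the paper: a character count for the $\mathbb{Z}_2^n$-action showing an invariant monomial $D_{i_1j_1}\cdots D_{i_mj_m}$ forces every index $k$ to occur an even number of times as a second subscript (hence $m$ even), followed by the observation that the degree-$4$ invariant monomials $D_{ij}D_{kj}$ fall into exactly two $S_n$-orbits whose orbit sums are $I_1$ and $I_2$. Your extra step of passing through the free polynomial ring $R$ and the Reynolds operator just makes explicit the reduction to invariant monomials that the paper performs directly inside the possibly non-free ring $D^*(X)$; it is a harmless (and slightly more careful) packaging of the same idea.
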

\begin{proof}
Since the $D_{ij}$ have degree $2$, $I^*(X)$ is concentrated in even degrees. Suppose $k = 2m$. Any element in $I^{2m}(X)$ is a linear combination of monomials
\[
D_{i_1 j_1} D_{i_2 j_2} \cdots D_{i_m j_m}.
\]
The subgroup $\mathbb{Z}_2^n \subset W_n$ sends each such monomial to plus or minus itself. Therefore any element of $D^{2m}(X)$ must be a linear combination consisting only of monomials that are $\mathbb{Z}_2^n$-invariant. However it is clear that
\[
\theta_k (D_{i_1 j_1} D_{i_2 j_2} \cdots D_{i_m j_m}) = (-1)^{\epsilon_k} D_{i_1 j_1} D_{i_2 j_2} \cdots D_{i_m j_m}
\]
where $\epsilon_k$ is the number of $a \in \{ 1, \dots , m\}$ for which $j_a = k$. Clearly
\[
\sum_{k=1}^n \epsilon_k = m.
\]
This means that the product $\theta_1 \theta_2 \cdots \theta_m$ acts on $D^{2m}(X)$ by $(-1)^m$. Hence $I^{2m}(X)=0$ for $m$ odd and so $I^*(X)$ is concentrated in degrees divisible by $4$.

Any element of $D^4(X)$ is a quadratic polynomial in the $D_{ij}$. Any invariant element of $D^4(X)$ must be a linear combination of monomials $D_{i_1 j_1} D_{i_2 j_2}$ that are $\mathbb{Z}_2^n$-invariant. Such a monomial $D_{i_1 j_1} D_{i_2 j_2}$ is $\mathbb{Z}_2^n$-invariant if and only if $j_1 = j_2$. Thus any element of $I^4(X)$ is a linear combination of monomials of the form
\[
D_{ij}^2, \; (i \neq j) \text{ or } D_{ik}D_{jk}, \;  (i,j,k \text{ distinct}).
\]
The symmetric group $S_n$ acts on such monomials with precisely two orbits. It follows that $I^4(X)$ is spanned by
\[
I_1 = \sum_{i,j | i \neq j} D_{ij}^2, \text{ and } I_2 =  \sum_{i,j,k | i,j,k \text{ distinct}} D_{ik} D_{jk}.
\]
\end{proof}

\begin{lemma}\label{lem:mu}
For $n \ge 2$ we have that
\[
\mu = - \left( \dfrac{n-5}{n(n-1)} \right)I_1 - \left( \dfrac{2}{n(n-1)} \right) I_2.
\]
\end{lemma}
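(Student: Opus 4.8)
The plan is to combine three facts already in hand: the pointwise relation $\mu = 2B_i - 3\sum_{j\mid j\neq i} D_{ij}^2$ supplied by Proposition \ref{prop:bk}, the definition $B_i = 2G_i + \sum_{j\mid j\neq i} D_{ij}^2$ from Proposition \ref{prop:xcubed}, and the associativity identity (\ref{equ:assoc2}). None of these individually expresses $\mu$ in terms of the invariant classes $I_1, I_2$, but appropriate sums over the index set do. First I would sum the identity of Proposition \ref{prop:bk} over $i = 1,\dots,n$. Since $\sum_{i}\sum_{j\mid j\neq i} D_{ij}^2 = I_1$ by definition, this yields
\[
n\mu = 2\sum_{i=1}^n B_i - 3 I_1,
\]
so it remains only to compute $\sum_i B_i$. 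Using $B_i = 2G_i + \sum_{j\mid j\neq i}D_{ij}^2$ and again $\sum_i\sum_{j\mid j\neq i}D_{ij}^2 = I_1$, we get $\sum_i B_i = 2\sum_i G_i + I_1$, which reduces the problem to expressing $\sum_i G_i$ through $I_1$ and $I_2$.

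For this last step I would sum Equation (\ref{equ:assoc2}), namely $G_i + G_j + \sum_{k\mid k\neq i,j} D_{ik}D_{jk} = D_{ij}^2 + D_{ji}^2$, over all ordered pairs $(i,j)$ with $i\neq j$. On the left, $\sum_{i\neq j}(G_i+G_j) = 2(n-1)\sum_i G_i$, while $\sum_{i\neq j}\sum_{k\mid k\neq i,j} D_{ik}D_{jk}$ is precisely the sum over distinct triples $(i,j,k)$, hence equals $I_2$; on the right, $\sum_{i\neq j}(D_{ij}^2+D_{ji}^2) = 2I_1$. Therefore $2(n-1)\sum_i G_i + I_2 = 2I_1$, so $\sum_i G_i = \dfrac{2I_1 - I_2}{2(n-1)}$, which makes sense exactly because $n\geq 2$. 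Back-substituting gives $\sum_i B_i = \dfrac{(n+1)I_1 - I_2}{n-1}$, and then
\[
n\mu = \frac{2(n+1)I_1 - 2I_2}{n-1} - 3I_1 = \frac{(5-n)I_1 - 2I_2}{n-1},
\]
which rearranges to $\mu = -\dfrac{n-5}{n(n-1)}I_1 - \dfrac{2}{n(n-1)}I_2$, as claimed.

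This argument is pure bookkeeping with finite sums, so there is no genuine obstacle; the only points requiring care are the correct counting of terms when summing (\ref{equ:assoc2}) over ordered pairs — that $\sum_{i\neq j}(D_{ij}^2 + D_{ji}^2) = 2I_1$, that the triple sum collapses to $I_2$, and that $\sum_i G_i$ appears with the factor $2(n-1)$ rather than $n-1$ — together with noting that the hypothesis $n\geq 2$ is exactly what permits the division by $n-1$ (the case $n=1$ being trivial since then $I_1 = I_2 = 0$ and $\mu$ is handled separately in Theorem \ref{thm:n=1}).
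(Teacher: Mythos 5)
Your proof is correct and follows essentially the same route as the paper's: sum Proposition \ref{prop:bk} over $i$, reduce $\sum_i B_i$ to $\sum_i G_i$ via Equation (\ref{equ:bg}), and compute $\sum_i G_i$ by summing the associativity identity (\ref{equ:assoc2}) over ordered pairs. The only cosmetic difference is that the paper packages the intermediate quantity $\sum_i B_i$ as $\tfrac{1}{3}\kappa_{e^2}$ using Proposition \ref{prop:eep1e}, whereas you carry $\sum_i B_i$ directly; the bookkeeping is identical.
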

\begin{proof}
From Proposition \ref{prop:bk}, we have
\[
-2B_i + 3 \sum_{j | j \neq i } D_{ij}^2 + \mu = 0
\]
Summing over all $i$, and using Proposition \ref{prop:eep1e}, we have
\begin{equation}\label{equ:nmu}
-\dfrac{2}{3} \kappa_{e^2} + 3I_1 + n \mu = 0.
\end{equation}
Next, recall that $B_i = 2G_i + \sum_{j | j \neq i} D_{ij}^2$. Summing over $i$, we get
\begin{equation}\label{equ:sumgi}
\sum_{i=1}^n G_i = \dfrac{1}{6} \kappa_{e^2} - \dfrac{1}{2} I_1.
\end{equation}
Next, consider equation (\ref{equ:assoc2}):
\[
G_i + G_j + \sum_{k| k \neq i,j} D_{ik}D_{jk} = D_{ij}^2 + D_{ji}^2
\]
for distinct $i,j$. Summing over all $i \neq j$, we get
\begin{equation}\label{equ:sumgi2}
2(n-1) \sum_{i=1}^n G_i + I_2 = 2I_1.
\end{equation}
Combining Equations (\ref{equ:sumgi}) and (\ref{equ:sumgi2}), we get
\[
\dfrac{1}{3}\kappa_{e^2} = \left( \dfrac{n+1}{n-1} \right) I_1 - \left( \dfrac{1}{n-1} \right) I_2.
\]
Note that we can divide by $(n-1)$ because of the assumption that $n \ge 2$. Substituting this into Equation (\ref{equ:nmu}), we get
\[
n \mu = - \left( \dfrac{n-5}{n-1} \right)I_1 - \left( \dfrac{2}{n-1} \right)I_2.
\]

\end{proof}

\begin{proposition}\label{prop:bici}
For $n \ge 2$, we have
\[
B_i = \dfrac{3}{2} \sum_{j | j \neq i} D_{ij}^2 - \dfrac{(n-5)}{2n(n-1)} I_1 - \dfrac{1}{(n-1)} I_2,
\]
\[
C_i = \dfrac{1}{(n-1)} \sum_{j | j \neq i} \left( D_{ji}^3 - B_i D_{ji}\right).
\]
Moreover, $B_i, C_i, G_i \in D^*(X)$ and $\omega \in I^8(X)$.
\end{proposition}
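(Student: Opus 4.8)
The plan is to read the two displayed formulas off results already proved, and then to bootstrap the membership statements from the associativity relations together with the formula for $\nu$.

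For $B_i$, Proposition \ref{prop:bk} gives $2B_i = 3\sum_{j\neq i} D_{ij}^2 + \mu$, so it suffices to substitute the expression for $\mu$ furnished by Lemma \ref{lem:mu} (this is where $n\ge 2$ is needed, since one divides by $n-1$). This presents $B_i$ as an explicit quadratic polynomial in the $D_{ij}$, so $B_i\in D^*(X)$; solving Equation (\ref{equ:bg}) then gives $G_i = \frac{1}{2}\bigl(B_i - \sum_{j\neq i} D_{ij}^2\bigr)\in D^*(X)$ as well. For $C_i$, Proposition \ref{prop:dcubic} asserts that $D_{ji}^3 - B_i D_{ji} = C_i$ for every $j\neq i$, the right-hand side being independent of $j$; summing the $n-1$ such identities and dividing by $n-1$ gives the stated averaged formula. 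Since $B_i\in D^*(X)$, this exhibits $C_i$ as a polynomial in the $D_{ij}$, hence $C_i\in D^*(X)$, and then Equation (\ref{equ:cj}) gives $J_i = C_i + \sum_{j\neq i} D_{ij}^2 D_{ji}\in D^*(X)$ too.

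It remains to handle $\omega$. Rearranging the associativity relation (\ref{equ:assoc4}) as $\omega = G_i^2 + \sum_{j\neq i} D_{ij}^2 D_{ji}^2 - \sum_{j\neq i} J_j D_{ij}$ and using $G_i, J_j\in D^*(X)$ shows $\omega\in D^8(X)$. To see that $\omega$ is in fact $W_n$-invariant, hence lies in $I^8(X)$, I would use the defining properties of the canonical basis to write $\omega = \int_{E/B}\nu^3$ (multiply $\nu^2 = \sum_j J_j x_j + \omega$ by $\nu$ and integrate over the fibre, using $\int_{E/B}\nu = 1$ and $\int_{E/B} x_j\nu = 0$). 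By Proposition \ref{prop:euler}, $\nu = \frac{1}{2}\bigl(e - x_1^2 - \cdots - x_n^2\bigr)$, and this is $W_n$-invariant because $e$ is intrinsic to the vertical tangent bundle while $x_1^2 + \cdots + x_n^2$ is manifestly invariant under permutations and sign changes of the $x_i$; since fibre integration is $W_n$-equivariant, $\omega$ is $W_n$-invariant.

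Everything here is essentially bookkeeping once Propositions \ref{prop:bk} and \ref{prop:dcubic}, Lemma \ref{lem:mu}, and the associativity relations are in hand. The one step deserving care is the $\omega$ statement, whose two halves must be argued separately: that $\omega$ lies in $D^8(X)$ is a direct computation with (\ref{equ:assoc4}), whereas the claim that it is $W_n$-invariant is a different point, and for the latter the identification $\omega = \int_{E/B}\nu^3$ together with the framing-independence of $\nu$ is what does the job.
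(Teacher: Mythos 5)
Your proof is correct and follows essentially the same route as the paper's: $B_i$ from Proposition \ref{prop:bk} plus Lemma \ref{lem:mu}, $C_i$ by averaging Proposition \ref{prop:dcubic} over $j \neq i$, and $G_i, J_i \in D^*(X)$ by solving Equations (\ref{equ:bg}) and (\ref{equ:cj}). The only point where you diverge is the $W_n$-invariance of $\omega$: the paper simply averages Equation (\ref{equ:assoc4}) over $i$ to produce a manifestly symmetric expression, whereas you split the claim into membership in $D^8(X)$ (from (\ref{equ:assoc4})) and invariance (from $\omega = \int_{E/B}\nu^3$ together with the framing-independence of $\nu$, visible from $\nu = \frac{1}{2}(e - x_1^2 - \cdots - x_n^2)$). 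Your version is slightly more careful, since the paper's averaging argument implicitly requires knowing how $W_n$ acts on the $G_i$ and $J_i$; both are valid. One bookkeeping remark that applies equally to your argument and the paper's: substituting $\mu = -\frac{n-5}{n(n-1)}I_1 - \frac{2}{n(n-1)}I_2$ into $2B_i = 3\sum_{j\neq i}D_{ij}^2 + \mu$ yields $-\frac{1}{n(n-1)}I_2$ as the last term, not $-\frac{1}{n-1}I_2$ as displayed in the statement; this appears to be a typo in the proposition (invisible for $n=2$, where $I_2=0$) and does not affect the structure of the proof.
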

\begin{proof}
Proposition \ref{prop:bk} and Lemma \ref{lem:mu} together imply that
\[
B_i = \dfrac{3}{2} \sum_{j | j \neq i} D_{ij}^2 - \dfrac{(n-5)}{2n(n-1)} I_1 - \dfrac{1}{(n-1)} I_2 \in D^4(X).
\]
Proposition \ref{prop:dcubic} gives $C_i = D_{ji}^3 - B_i D_{ji}$ for all $j \neq i$. Averaging over $j$ for $j \neq i$ gives
\[
C_i = \dfrac{1}{(n-1)} \sum_{j | j \neq i} \left( D_{ji}^3 - B_i D_{ji}\right) \in D^6(X).
\]
(It is not necessary to average over $j$ in order to show $C_i \in D^6(X)$, but this makes the expression for $C_i$ symmetric). Now from Equations (\ref{equ:bg}) and (\ref{equ:cj}), it follows that $G_i, J_i \in D^*(X)$. Lastly if we take Equation (\ref{equ:assoc4}) and average over $i$, we obtain $\omega \in I^8(X)$.
\end{proof}

\begin{proof}[Proof of Theorem \ref{thm:tautring}] We have already constructed the classes
\[
D_{ij} \in H^2( \overline{BDiff}(X) ; \mathbb{Q}),  1 \le i,j  \le n, i \neq j
\]
and the group $W_n$ acts on the $D_{ij}$ as specified in part (i) of the theorem. Part (ii) of the theorem was explained in Remark \ref{rem:invariantbdiff}. For part (iii), first note that the tautological classes are $W_n$-invariant because they lie in $H^*( BDiff(X) ; \mathbb{Q})$, which is the $W_n$-invariant part of $H^*( \overline{BDiff}(X) ; \mathbb{Q})$. Thus it suffices to show that each tautological class $\kappa_{p_1^a e^b}$ belongs to $D^*(X)$. From Propositions \ref{prop:tauteven}, \ref{prop:tautodd}, it follows that each tautological class $\kappa_{p_1^a e^b}$ can be written as a polynomial in $D_{ij}, B_i, C_i$. But from Proposition \ref{prop:bici} we have $B_i , C_i \in D^*(X)$ and of course $D_{ij} \in D^*(X)$. Hence $\kappa_{p_1^a e^b} \in D^*(X)$.
\end{proof}

For each pair of non-negative integers $a,b$ we define a two variable polynomial $\phi_{a,b}(x,y) \in \mathbb{Z}[x,y]$ as follows. Let 
\[
p(z) = z^3-xz-y, \quad p'(z) = 3z^2-x.
\]
Then we define
\[
\phi_{a,b}(x,y) = \dfrac{1}{2\pi i}  \ointctrclockwise \dfrac{ (p'(z)+3x)^a  (p'(z))^b }{p(z)} dz
\]
where the contour encloses all zeros of $p(z)$. Since $(p'(z)+3x)^{a+1} ( p'(z) )^b = (p'(z)+3x)^a p'(z)^{b+1} + 3x (p'(z))^a (p'(z))^b$, we have that
\begin{equation}\label{equ:rec1}
\phi_{a+1,b}(x,y) = \phi_{a,b+1}(x,y) + 3 x \phi_{a,b}(x,y).
\end{equation}

By a direct computation, one finds that
\[
(p'(z))^3 = 3x( p'(z) )^2 + (27y^2-4x^3) + 27 (p(z))^2 + 54 y p(z).
\]
Multiplying both sides by $(p'(z)+3x)^a ( p'(z))^b )$ and taking contour integrals, we see that
\begin{align*}
\phi_{a,b+3}(x,y) &= 3x \phi_{a,b+2}(x,y) + (27y^2-4x^3) \phi_{a,b}(x,y) \\
& \quad \quad + \dfrac{1}{2\pi i}  \ointctrclockwise (p'(z)+3x)^a  (p'(z))^b ( 27p(z) + 54y) dz.
\end{align*}
But the integrand is holomorphic, so the integral is zero, giving the recursive formula
\begin{equation}\label{equ:rec2}
\phi_{a,b+3}(x,y) = 3x \phi_{a,b+2}(x,y) + (27y^2-4x^3) \phi_{a,b}(x,y).
\end{equation}
The recursive relations (\ref{equ:rec1}), (\ref{equ:rec2}) can be used to compute $\phi_{a,b}$ recursively from $\phi_{0,0}, \phi_{0,1}$ and $\phi_{0,2}$, which we now compute. Since $\phi_{a,b}(x,y)$ is a polynomial in $x,y$, it suffices to compute the value of $\phi_{a,b}(x,y)$ as a function of $(x,y)$ on an open subset of $\mathbb{C}^2$. Assume that the discriminant $4x^3 - 27y^2 \neq 0$ so that $p(z)$ has distinct roots $\lambda_1, \lambda_2, \lambda_3$. Then $p(z) = (z-\lambda_1)(z-\lambda_2)(z-\lambda_3)$, where $\lambda_1 + \lambda_2 + \lambda_3 = 0$, $\lambda_1 \lambda_2 + \lambda_1 \lambda_3 + \lambda_2 \lambda_3 = -x$, $\lambda_1 \lambda_2 \lambda_3 = y$. Then by the residue theorem,
\begin{align*}
\phi_{0,0}(x,y) &= \frac{1}{p'(\lambda_1)} + \frac{1}{p'(\lambda_2)} + \frac{1}{p'(\lambda_3)} \\
& = \frac{1}{(\lambda_1-\lambda_2)(\lambda_1-\lambda_3)} + \frac{1}{(\lambda_2-\lambda_1)(\lambda_2-\lambda_3)} + \frac{1}{(\lambda_3-\lambda_1)(\lambda_3-\lambda_2)} \\
& = \frac{1}{(\lambda_1-\lambda_2)(\lambda_2-\lambda_3)(\lambda_3-\lambda_1)}\left( (\lambda_3-\lambda_2) + (\lambda_1 - \lambda_3) + (\lambda_2 - \lambda_1) \right) \\
& = 0.
\end{align*}
By the argument principle
\[
\phi_{0,1}(x,y) = 3
\]
and by the residue theorem again, 
\begin{align*}
\phi_{0,2}(x,y) &= p'(\lambda_1) + p'(\lambda_2) + p'(\lambda_3) \\
& = (\lambda_1-\lambda_2)(\lambda_1-\lambda_3) + (\lambda_2-\lambda_1)(\lambda_2-\lambda_3) + (\lambda_3-\lambda_1)(\lambda_3-\lambda_2) \\
&= \lambda_1^2 + \lambda_2^2 + \lambda_3^2 -\lambda_1 \lambda_2 - \lambda_1 \lambda_3 - \lambda_2 \lambda_3 \\
& = (\lambda_1 + \lambda_2 + \lambda_3)^2 - 3(\lambda_1 \lambda_2 + \lambda_1 \lambda_3 + \lambda_2 \lambda_3) \\
& 3x.
\end{align*}

\begin{proof}[Proof of Theorem \ref{thm:tautclasses}] From Propositions \ref{prop:tauteven}, \ref{prop:tautodd}, we have
\[
\kappa_{p_1^a e^{2b} } = \int_{E/B} p_1^a e^{2b} = \sum_{i=1}^n \int_{E/B} p_1(i)^a e(i)^{2b}
\]
and
\[
\kappa_{p_1^a e^{2b+1} } = \int_{E/B} p_1^a e^{2b+1} = \sum_{i=1}^n \int_{E/B} p_1(i)^a e(i)^{2b} - 2 \sum_{j | j \neq i} (3D_{ij}^2+2B_j)^a (3D_{ij}^2-B_j)^{2b}.
\]
So it remains to show that
\begin{equation}\label{equ:kappai}
\int_{E/B} p_1(i)^a e(i)^{2b} = \phi_{a,b}(B_i , C_i).
\end{equation}
To prove Equation (\ref{equ:kappai}) for all $a,b \ge 0$, it suffices to show that both sides of the expressions satisfy the same recursion relations and same initial conditions. For convenience let us set
\[
\kappa_{a,b,i} = \int_{E/B} p_1(i)^a e(i)^{2b} = \int_{E/B} (3x_i^2+2B_i)^a (3x_i^2 - B_i)^b.
\]
Then we need to show that $\kappa_{a,b,i} = \phi_{a,b}(B_i , C_i)$ for all $a,b \ge 0$ and all $i$. Clearly
\[
\kappa_{0,0,i}= 0 = \phi_{0,0}(B_i,C_i), \quad \kappa_{0,1,i} = 3 = \phi_{0,1}(B_i,C_i)
\]
and
\[
\kappa_{0,2,i} = \int_{E/B} (3x^2_i-B)^2 = \int_{E/B} (9x^4_i-6Bx^2_i+B^2_i) = 3B_i = \phi_{0,2}(B_i,C_i).
\]
So $\kappa_{a,b,i} = \phi_{a,b}(B_i,C_i)$ for $(a,b) = (0,0), (0,1), (0,2)$. Next, from $(p_1)_i = e_i + 3B_i$, we see that
\[
\kappa_{a+1,b,i} = \kappa_{a,b+1,i} + 3B_i \kappa_{a,b,i}.
\]
Lastly, a short calculation shows that
\[
e(i)^3 = 3B_i e(i)^2 + 27C^2_i-4B^3_i
\]
so that
\[
\kappa_{a,b+3,i} = 3B_i \kappa_{a,b+2,i} + (27 C^2_i - 4B^3_i) \kappa_{a,b,i}.
\]
Hence $\kappa_{a,b,i}$ satisfies the same recursive relations and initial conditions as $\phi_{a,b}(B_i,C_i)$, so $\kappa_{a,b,i} = \phi_{a,b}(B_i,C_i)$ for all $a,b \ge 0$ and all $i$.
\end{proof}

\begin{remark}\label{rem:topstruct}
From Lemma \ref{lem:mu} and Proposition \ref{prop:p1}, it follows that $p_1$ can be completely expressed in terms of the classes $x_1, \dots , x_n$ and $\{ D_{ij} \}$. Therefore, $p_1$ depends only on the underlying topological structure of the family $\pi : E \to B$, because the classes $x_1, \dots , x_n$, $\{ D_{ij} \}$ are uniquely characterised in terms of the push-forward map $\pi_* : H^*(E ; \mathbb{Q}) \to H^{*-4}(B ; \mathbb{Q})$ (the classes $x_1, \dots , x_n$, $\{ D_{ij} \}$ also depend on a choice of framing, but $p_1$ is clearly $W_n$-invariant, so does not depend on this choice). It is also clear that $e$ depends only on the underlying topological structure of the family. Therefore, the tautological classes $\kappa_{p_1^a e^b}(E)$ depend only on the topological structure of the family. As mentioned in the introduction, this also follows from the fact that the tautological classes are also defined for topological bundles \cite[Theorem B]{erw}.
\end{remark}

\section{$\mathbb{CP}^2$}\label{sec:n=1}

In this section we specialise to the case $n=1$. Amongst other results, we completely determine the tautological ring of $\mathbb{CP}^2$.

\begin{theorem}\label{thm:n=1case}
Let $E \to B$ be a smooth family with fibres diffeomorphic to $X$, where $X$ is a smooth, compact, simply-connected, positive definite $4$-manifold with $b_2(X)=1$. Suppose that the family has structure group $Diff_0(X)$ and let $\xi \in H^2(X ; \mathbb{Z})$ be a framing. Let $x \in H^2 ( E ; \mathbb{Q})$ be the unique class such that $x|_{X} = \xi$ and $\int_{E/B} x^3 = 0$. Then there exists classes $B \in H^4( B ; \mathbb{Q})$, $C \in H^6( B ; \mathbb{Q})$ such that:
\begin{itemize}
\item[(i)]{There is an isomorphism of $H^*(B ; \mathbb{Q})$-algebras
\[
H^*(E ; \mathbb{Q}) \cong H^*(B ; \mathbb{Q})[x]/( x^3 - Bx-C).
\]
}
\item[(ii)]{The Euler class and first Pontryagin classes of $T(E/B)$ are given by
\[
e = 3x^2-B, \quad p_1 = 3x^2+2B.
\]
}
\item[(iii)]{For all $a,b \ge 0$, we have
\[
\kappa_{p_1^a e^b}(E) = \phi_{a,b}( B , C ).
\]
}
\end{itemize}
\end{theorem}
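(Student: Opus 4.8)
The plan is to obtain Theorem \ref{thm:n=1case} as the specialisation to $n=1$ of the results of Sections \ref{sec:families}--\ref{sec:tautclasses}, in which every sum over indices $j\neq i$ is empty and the off-diagonal classes $D_{ij}$ do not occur. First I would recall from Section \ref{sec:families} that, since the family has structure group $Diff_0(X)$, the module $H^*(E;\mathbb{Q})$ is free over $H^*(B;\mathbb{Q})$ on the basis $1,x,\nu$, where $x=x_1$ is the unique class restricting to $\xi$ with $\int_{E/B}x^3=0$ and $\nu$ is normalised by $\int_{E/B}\nu=1=\int_{E/B}x^2$, $\int_{E/B}x\nu=\int_{E/B}\nu^2=0$ (the uniqueness of $x$ is the $n=1$ case of the corresponding statement there: a competing class differs from $x$ by $\pi^*a$ with $a\in H^2(B;\mathbb{Q})$, and $\int_{E/B}(x+\pi^*a)^3=3a$ forces $a=0$). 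Specialising Proposition \ref{prop:ring} to $n=1$ gives $x^2=\nu+G_1$, $x\nu=G_1x+J_1$, $\nu^2=J_1x+\omega$, and Proposition \ref{prop:xcubed} gives $x^3=B_1x+C_1$ with $B_1=2G_1$, $C_1=J_1$. I would set $B:=B_1$ and $C:=C_1$.

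For (i): since $\nu=x^2-G_1$, the set $\{1,x,x^2\}$ is also a free $H^*(B;\mathbb{Q})$-basis of $H^*(E;\mathbb{Q})$. As $x^3-Bx-C$ is monic of degree $3$, the ring $H^*(B;\mathbb{Q})[x]/(x^3-Bx-C)$ is a free $H^*(B;\mathbb{Q})$-module on $1,x,x^2$, and the $H^*(B;\mathbb{Q})$-algebra map to $H^*(E;\mathbb{Q})$ sending $x\mapsto x$ is well defined (the cubic holds in $H^*(E;\mathbb{Q})$) and carries this basis to the basis $1,x,x^2$, hence is an isomorphism. For (ii): Proposition \ref{prop:p1} with $n=1$ gives $p_1=3x^2+\mu$, Proposition \ref{prop:bk} with $n=1$ gives $\mu=2B_1=2B$, so $p_1=3x^2+2B$; Proposition \ref{prop:euler} with $n=1$ gives $e=2\nu+x^2=3x^2-2G_1=3x^2-B$.

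For (iii): by definition $\kappa_{p_1^ae^b}(E)=\int_{E/B}p_1^ae^b$, so I would show $\int_{E/B}p_1^ae^b=\phi_{a,b}(B,C)$ for all $a,b\ge 0$ by the induction already used in the proof of Theorem \ref{thm:tautclasses}. Setting $\kappa_{a,b}=\int_{E/B}p_1^ae^b$, one has $\kappa_{0,0}=0$, $\kappa_{0,1}=3$ and $\kappa_{0,2}=3B$ (using $\int_{E/B}x^4=B$, immediate from $x^3=Bx+C$); moreover $p_1=e+3B$ yields $\kappa_{a+1,b}=\kappa_{a,b+1}+3B\kappa_{a,b}$, and the algebraic identity $e^3=3Be^2+(27C^2-4B^3)$ in $H^*(E;\mathbb{Q})$ (a direct consequence of $x^3=Bx+C$) yields $\kappa_{a,b+3}=3B\kappa_{a,b+2}+(27C^2-4B^3)\kappa_{a,b}$. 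These are precisely the recursions and initial conditions defining $\phi_{a,b}(B,C)$, so $\kappa_{a,b}=\phi_{a,b}(B,C)$.

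I do not anticipate a genuine obstacle here, as everything reduces to earlier results; the work is essentially bookkeeping. The only point worth checking is that the results invoked (notably Proposition \ref{prop:bk}, via Theorem \ref{thm:sw2}) remain valid for $n=1$: in Theorem \ref{thm:sw2} the virtual rank is $d=(9+(n-1)-n)/8=1$, which is fine when $n=1$, and the integration formulas of Propositions \ref{prop:intf} and \ref{prop:intf2} are trivial for $n=1$. The results requiring $n\ge 2$ (Lemma \ref{lem:mu}, Proposition \ref{prop:bici}) only serve to express $\mu,B_i,C_i$ through the $D_{ij}$ and are not needed here.
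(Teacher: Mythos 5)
Your proposal is correct and follows essentially the same route as the paper: specialise Propositions \ref{prop:ring}, \ref{prop:xcubed}, \ref{prop:p1}, \ref{prop:bk} and \ref{prop:euler} to $n=1$ to get $B=B_1=2G_1$, $C=C_1=J_1$, $\mu=2B$, $e=3x^2-B$, $p_1=3x^2+2B$, and then run the same recursion argument as in the proof of Theorem \ref{thm:tautclasses} for part (iii). The extra details you supply (uniqueness of $x$, the explicit isomorphism in (i), and the check that the inputs remain valid at $n=1$) are all sound and merely make explicit what the paper leaves implicit.
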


\begin{proof}
(i) is immediate from Proposition \ref{prop:xcubed}. From Proposition \ref{prop:bk}, we have $\mu = 2B$. Then Proposition \ref{prop:p1} gives
\[
p_1 = 3x^2 + \mu = 3x^2 + 2B.
\]
From Equation (\ref{equ:bg}) and Proposition \ref{prop:ring}, we have 
\[
x^2 = \nu + G
\]
where $G = B/2$. Then from Proposition \ref{prop:euler}, we have
\[
e = 2\nu + x^2 = (2x^2-B)+x^2 = 3x^2 - B.
\]
This proves (ii). From (ii), it follows that
\[
\kappa_{p_1^a e^b}(E) = \int_{E/B} (3x^2+2B)^a(3x^2-B)^b.
\]
Using the exact same argument as in the Proof of Theorem \ref{thm:tautclasses}, we have that
\[
\kappa_{p_1^a e^b}(E) = \phi_{a,b}(B,C).
\]
\end{proof}

Note in particular that
\[
\kappa_{p_1^2}(E) = 21B, \quad \kappa_{p_1^4}(E) = 81 C^2 + 609 B^3.
\]

\begin{theorem}\label{thm:tautringcp2}
The tautological ring of $\mathbb{CP}^2$ is isomorphic to a polynomial ring generated by $\kappa_{p_1^2}$ and $\kappa_{p_1^4}$:
\[
R^*(\mathbb{CP}^2) \cong \mathbb{Q}[ \kappa_{p_1^2}, \kappa_{p_1^4}].
\]
\end{theorem}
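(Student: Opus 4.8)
The plan is to lean on Theorem \ref{thm:n=1case}, which already describes the tautological classes of every $\mathbb{CP}^2$-family completely, and then to add one genuinely new input: an explicit family realizing algebraically independent values of $\kappa_{p_1^2}$ and $\kappa_{p_1^4}$.

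First I would record the elementary fact that $\phi_{a,b}(x,y)\in\mathbb{Z}[x,y^2]$ for all $a,b\ge 0$. This follows by induction: the initial data $\phi_{0,0}=0$, $\phi_{0,1}=3$, $\phi_{0,2}=3x$ lie in $\mathbb{Z}[x,y^2]$, and both recursions $\phi_{a+1,b}=\phi_{a,b+1}+3x\phi_{a,b}$ and $\phi_{a,b+3}=3x\phi_{a,b+2}+(27y^2-4x^3)\phi_{a,b}$ preserve this subring. Applying Theorem \ref{thm:n=1case} to an arbitrary compact smooth $\mathbb{CP}^2$-family $E\to B$ with structure group $Diff_0(\mathbb{CP}^2)$ (such families detect all of $H^*(BDiff(\mathbb{CP}^2);\mathbb{Q})$), this gives $\kappa_{p_1^a e^b}(E)=\phi_{a,b}(B,C)\in\mathbb{Q}[B,C^2]$. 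Hence, passing to the universal classes $B,C$, every generator $\kappa_{p_1^a e^b}$ of $R^*(\mathbb{CP}^2)$ lies in the subring $\mathbb{Q}[B,C^2]\subseteq H^*(BDiff(\mathbb{CP}^2);\mathbb{Q})$.

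Next I would pin down the two distinguished generators. From $\phi_{2,0}(x,y)=21x$ and $\phi_{4,0}(x,y)=81y^2+609x^3$ (obtained from the recursions, or by expanding $(3x^2+2B)^4$ and integrating with the help of $x^3=Bx+C$, which gives $\int_{E/B}x^2=1$, $\int_{E/B}x^4=B$, $\int_{E/B}x^6=B^2$, $\int_{E/B}x^8=B^3+C^2$) one gets
\[
\kappa_{p_1^2}=21B,\qquad \kappa_{p_1^4}=81C^2+609B^3 ,
\]
equivalently $B=\tfrac{1}{21}\kappa_{p_1^2}$ and $C^2=\tfrac{1}{81}\bigl(\kappa_{p_1^4}-609B^3\bigr)$. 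Therefore $\mathbb{Q}[B,C^2]=\mathbb{Q}[\kappa_{p_1^2},\kappa_{p_1^4}]$ as subrings of $H^*(BDiff(\mathbb{CP}^2);\mathbb{Q})$ (both $B$ and $C^2$ lie in $H^*(BDiff(\mathbb{CP}^2);\mathbb{Q})$ since they are expressed through tautological classes; note $C$ itself is only $W_1$-anti-invariant, but $C^2$ is $W_1$-invariant). Combined with the previous paragraph this yields $R^*(\mathbb{CP}^2)\subseteq\mathbb{Q}[\kappa_{p_1^2},\kappa_{p_1^4}]$, and the reverse inclusion is immediate, so $R^*(\mathbb{CP}^2)=\mathbb{Q}[\kappa_{p_1^2},\kappa_{p_1^4}]$ as subrings.

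It remains to show that $\kappa_{p_1^2}$ and $\kappa_{p_1^4}$ are algebraically independent over $\mathbb{Q}$, equivalently that $B$ and $C^2$ are; this is the heart of the matter. For this I would test against the family $E=\mathbb{P}(V)\to\mathbb{CP}^N\times\mathbb{CP}^N$ with $V=\mathcal{O}(1,0)\oplus\mathcal{O}(0,1)\oplus\mathcal{O}(-1,-1)$: the base is simply connected, so the family has structure group $Diff_0(\mathbb{CP}^2)$, and $V$ has trivial determinant, so by Remark \ref{rem:cp2} the classes $B$ and $C$ pull back to $-c_2(V)=a^2+ab+b^2$ and $-c_3(V)=a^2b+ab^2$, where $a,b\in H^2$ are the hyperplane classes of the two factors; thus $C^2$ pulls back to $a^2b^2(a+b)^2$. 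The two polynomials $f=a^2+ab+b^2$ and $g=a^2b^2(a+b)^2$ are algebraically independent in $\mathbb{Q}[a,b]$, as witnessed by the non-vanishing Jacobian
\[
\frac{\partial(f,g)}{\partial(a,b)}=2ab(a+b)(a-b)(2a+b)(a+2b) .
\]
Any homogeneous polynomial relation among $\kappa_{p_1^2},\kappa_{p_1^4}$ of cohomological degree $2d$ would pull back, for $N\ge d$, to a nontrivial polynomial relation between $f$ and $g$ in $\mathbb{Q}[a,b]$ — which is impossible — so no such relation exists. Hence $R^*(\mathbb{CP}^2)\cong\mathbb{Q}[\kappa_{p_1^2},\kappa_{p_1^4}]$. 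The only non-formal ingredient is this last step: the choice of the test family $\mathbb{P}(V)$ and the Jacobian computation are where the content lies, everything else being bookkeeping on top of Theorem \ref{thm:n=1case}.
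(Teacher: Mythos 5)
Your proposal is correct and follows essentially the same route as the paper: both deduce the containment $R^*(\mathbb{CP}^2)\subseteq\mathbb{Q}[\kappa_{p_1^2},\kappa_{p_1^4}]$ from Theorem \ref{thm:n=1case} together with the formulas $\kappa_{p_1^2}=21B$, $\kappa_{p_1^4}=81C^2+609B^3$, and both establish algebraic independence by testing against $\mathbb{P}(L\oplus M\oplus L^*M^*)$, where $B=-c_2$ and $C=-c_3$ are symmetric functions of the Chern roots $l,m,-l-m$. Your version is somewhat more explicit at two points where the paper is terse --- you verify independence via a Jacobian over the concrete base $\mathbb{CP}^N\times\mathbb{CP}^N$ (handling the truncation by taking $N$ large) and you derive the evenness of powers of $C$ from the $\phi_{a,b}$ recursion rather than from a degree count --- but these are refinements of the same argument, not a different one.
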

\begin{proof}

As explained in the introduction, if a relation amongst tautological classes holds in $R^*(E)$ for all smooth compact $\mathbb{CP}^2$ families $E \to B$, then it must also hold in $R^*(\mathbb{CP}^2)$. Furthermore, since the map $H^*( BDiff( \mathbb{CP}^2) ; \mathbb{Q} ) \to H^*( BDiff_0(\mathbb{CP}^2) ; \mathbb{Q})$ is injective, we can further restrict to families with structure group $Diff_0(\mathbb{CP}^2)$. From Theorem \ref{thm:n=1case}, we see that every tautological class $\kappa_{p_1^a e^b}(E)$ is a polynomial in $B$, $C$. In fact, by comparing degrees we see that only even powers of $C$ can occur and hence $\kappa_{p_1^a e^b}(E)$ is a polynomial in $B$ and $C^2$. Next, since
\[
\kappa_{p_1^2}(E) = 21B, \quad \kappa_{p_1^4}(E) = 81 C^2 + 609 B^3,
\]
we see that $B$ and $C^2$ can be expressed as
\[
B = \dfrac{1}{21} \kappa_{p_1^2}(E), \quad C^2 = \dfrac{1}{81} \kappa_{p_1^4}(E) - \dfrac{203}{27} ( \kappa_{p_1^2}(E)/21 )^3.
\]
Hence every tautological class can be written as a polynomial in $\kappa_{p_1^2}, \kappa_{p_1^4}$. To complete the proof it remains to check that there are no relations between $\kappa_{p_1^2}$ and $\kappa_{p_1^4}$. To show this, consider families of the form $E = \mathbb{P}(V)$, the bundle of projective spaces underlying a complex rank $3$ vector bundle of the form $V = L \oplus M \oplus (L^*M^*)$ for two line bundles $L,M \to B$. If $c_1(L) = l$, $c_1(M) = m$, then one finds
\[
B = l^2 + m^2 + lm, \quad C = lm(l+m).
\]
So $-B, -C$ are the second and third elementary symmetric polynomials in $l , m , -l-m$. It follows that there can be no relation between $B,C$ that holds for all line bundles $L,M$ on all $B$ and hence there can be no relation between $\kappa_{p_1^2}$ and $\kappa_{p_1^4}$.
\end{proof}

Following \cite{gri}, \cite{ggrw}, we consider variants $R^*( X , *)$ and $R^*( X , D^4)$ of the tautological ring, which are defined as follows. Let $Diff(X , *)$ be the subgroup of $Diff(X)$ fixing a point and $Diff(X , D^4)$ the subgroup which acts as the identity on an open disc $D^4 \subset X$. There are obvious inclusions
\begin{equation}\label{equ:inclusions}
Diff(X , D^4 ) \to Diff(X , * ) \to Diff(X)
\end{equation}
and a homomorphism $s : Diff(X , *) \to GL_+(4,\mathbb{R})$ which sends a diffeomorphism of $X$ to its derivative at the marked point. For each $c \in H^*(BGL_+(4,\mathbb{R}) ; \mathbb{Q}) \cong H^*(BSO(4) ; \mathbb{Q})$ we can take its pullback $s^*(c) \in H^*(BDiff(X,*) ; \mathbb{Q})$. We define $R^*(X , *)$ to be the subring of $H^*(BDiff(X , *) ; \mathbb{Q})$ generated by the $s^*(c)$ together with the pullback to $BDiff(X , *)$ of all tautological classes $\kappa_c$. We similarly define $R^*(X , D^4)$ to be the subring of $H^*( BDiff(X , D^4) ; \mathbb{Q})$ generated by the pullback to $BDiff(X , D^4)$ of all tautological classes $\kappa_c$. The inclusions (\ref{equ:inclusions}) give ring homomorphisms
\[
R^*(X) \buildrel f \over \longrightarrow R^*( X , *) \buildrel g \over \longrightarrow R^*(X , D^4)
\]
whose composition is surjective.

\begin{theorem}
We have ring isomorphisms
\begin{itemize}
\item[(1)]{$R^*(\mathbb{CP}^2 , * ) \cong \mathbb{Q}[ p_1, e]$.}
\item[(2)]{$R^*(\mathbb{CP}^2 , D^4) \cong \mathbb{Q}$.}
\end{itemize}
\end{theorem}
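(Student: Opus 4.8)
The plan is to deduce both statements from the $n=1$ structure result Theorem~\ref{thm:n=1case} together with the section of the universal family coming from the marked point. Throughout I would restrict to families with structure group $Diff_0(\mathbb{CP}^2,*)$, resp.\ $Diff_0(\mathbb{CP}^2,D^4)$; this is harmless since $BDiff_0 \to BDiff$ is a finite covering in each case and so is injective on rational cohomology.

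For (1) I would first prove that $R^*(\mathbb{CP}^2,*)$ equals the image of $s^* : H^*(BSO(4);\mathbb{Q}) \to H^*(BDiff(\mathbb{CP}^2,*);\mathbb{Q})$. Given a family $\pi : E \to B$ with structure group $Diff_0(\mathbb{CP}^2,*)$, let $\sigma : B \to E$ be the section given by the marked point and put $y = \sigma^*(x) \in H^2(B;\mathbb{Q})$, where $x$ is as in Theorem~\ref{thm:n=1case}. Since $\sigma^* T(E/B)$ is exactly the bundle classified by $s$, applying $\sigma^*$ to part~(ii) of that theorem gives $s^*(e) = 3y^2 - B$ and $s^*(p_1) = 3y^2 + 2B$, hence $B = \tfrac{1}{3}(s^*(p_1)-s^*(e))$ and $y^2 = \tfrac{1}{9}(s^*(p_1)+2s^*(e))$ lie in the subring generated by $s^*(p_1),s^*(e)$; and applying $\sigma^*$ to $x^3 = Bx + C$ gives $C = y^3 - By$. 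By part~(iii), $\kappa_{p_1^a e^b}(E) = \phi_{a,b}(B,C)$, which (as already noted in the proof of Theorem~\ref{thm:tautringcp2}) is a polynomial in $B$ and $C^2 = y^2(y^2-B)^2$, hence a polynomial in $s^*(p_1)$ and $s^*(e)$. Thus every generator of $R^*(\mathbb{CP}^2,*)$ lies in the image of $s^*$, so $R^*(\mathbb{CP}^2,*)$ equals that image. I expect this step — that the tautological classes do not escape the image of $s^*$ — together with the identification $s^*(c) = \sigma^*c(T(E/B))$ to be the only point needing genuine care.

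Next I would show $s^*$ is injective, which reduces to exhibiting one family on which $s^*(p_1)$ and $s^*(e)$ pull back to algebraically independent classes. The natural choice is $E = \mathbb{P}(V)$ with $V = L \oplus M \oplus (L^*M^*)$ over $\mathbb{CP}^N \times \mathbb{CP}^N$ (with $l = c_1(L)$, $m = c_1(M)$ the pullbacks of the hyperplane classes) and the section $\mathbb{P}(L)$. Its normal bundle is $L^*M \oplus L^{*2}M^*$, so $s^*(e)$ and $s^*(p_1)$ restrict to $c_2 = 2l^2 - lm - m^2$ and $c_1^2 - 2c_2 = 5l^2 + 2lm + 2m^2$; the Jacobian $\partial(5l^2+2lm+2m^2,\,2l^2-lm-m^2)/\partial(l,m) = -18l^2 - 36lm$ is nonzero, so these two quadratic forms are algebraically independent over $\mathbb{Q}$. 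Hence no polynomial relation in $s^*(p_1),s^*(e)$ can hold universally, and $R^*(\mathbb{CP}^2,*) \cong \mathbb{Q}[p_1,e]$.

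Finally, for (2): every element of $Diff(\mathbb{CP}^2,D^4)$ has derivative equal to the identity at the marked point, so the composite $Diff(\mathbb{CP}^2,D^4) \hookrightarrow Diff(\mathbb{CP}^2,*) \xrightarrow{s} SO(4)$ is constant and the corresponding map $BDiff(\mathbb{CP}^2,D^4) \to BSO(4)$ is null-homotopic; consequently $g$ sends $s^*(p_1)$ and $s^*(e)$ to zero. By part~(1), each $\kappa_{p_1^a e^b}$ with $a+b \ge 2$ is a polynomial in $s^*(p_1),s^*(e)$ homogeneous of positive degree, hence with vanishing constant term, so its image in $H^*(BDiff(\mathbb{CP}^2,D^4);\mathbb{Q})$ is zero, while the classes with $a+b \le 1$ are the scalars $\kappa_{p_1} = 3\sigma(\mathbb{CP}^2) = 3$, $\kappa_e = \chi(\mathbb{CP}^2) = 3$, $\kappa_1 = 0$. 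Therefore the subring generated by the pulled-back tautological classes is just $\mathbb{Q}$, i.e.\ $R^*(\mathbb{CP}^2,D^4) \cong \mathbb{Q}$.
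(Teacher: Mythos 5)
Your argument is correct, and for part (1) it takes a genuinely different route from the paper's. The paper identifies $BDiff(\mathbb{CP}^2,*)$ with the total space $U_X$ of the universal family and works there: it shows $R^*(\mathbb{CP}^2,*)$ is generated by $\kappa_{p_1^2},\kappa_{p_1^4}$ and $p_1$, derives the cubic relation (\ref{equ:p13}) satisfied by $p_1$, proves by fibre integration that $R^*(\mathbb{CP}^2,*)$ is a free $R^*(\mathbb{CP}^2)$-module on $1,p_1,p_1^2$, and then eliminates $\kappa_{p_1^4}$ from the resulting presentation. You instead pull everything back along the canonical section: the identities $B=\tfrac13(s^*p_1-s^*e)$ and $y^2=\tfrac19(s^*p_1+2s^*e)$, combined with $C^2=y^2(y^2-B)^2$ and the evenness of $\phi_{a,b}$ in its second variable, put every $\kappa_{p_1^ae^b}$ directly into $\mathbb{Q}[s^*p_1,s^*e]$ with no module-theoretic bookkeeping, and you certify algebraic independence of $s^*p_1,s^*e$ on an explicit projective bundle with section via the Jacobian criterion, rather than routing through the independence of $\kappa_{p_1^2},\kappa_{p_1^4}$ from Theorem \ref{thm:tautringcp2}. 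Both arguments use the same inputs (Theorem \ref{thm:n=1case} and the fact that $\phi_{a,b}(x,y)$ involves only even powers of $y$); yours is arguably more transparent about why the answer is a polynomial ring on exactly these two classes, at the cost of one extra explicit example. The computations check out --- the step you flag as needing care, namely $s^*(c)=\sigma^*c(T(E/B))$, is correct, and your value $B=l^2+lm+m^2$ for the bundle $L\oplus M\oplus(L^*M^*)$ agrees with the one recorded in the proof of Theorem \ref{thm:tautringcp2}. Your part (2) is essentially identical to the paper's argument.
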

\begin{proof}
First note that we can identify $BDiff(\mathbb{CP}^2 , *) \to BDiff(\mathbb{CP}^2)$ with the universal bundle $U_X \to BDiff(\mathbb{CP}^2)$ because $Diff(\mathbb{CP}^2 , *)$ is a closed subgroup of $Diff(\mathbb{CP}^2)$, and so
\begin{align*}
BDiff(\mathbb{CP}^2 , * ) &= EDiff(\mathbb{CP}^2)/ Diff(\mathbb{CP}^2 , * ) \\
&= EDiff(\mathbb{CP}^2) \times_{Diff(\mathbb{CP}^2)} ( Diff(\mathbb{CP}^2)/Diff(\mathbb{CP}^2,*)) \\
&= EDiff(\mathbb{CP}^2) \times_{Diff(\mathbb{CP}^2)} \mathbb{CP}^2 = U_X.
\end{align*}
So we can think of $R^*(\mathbb{CP}^2 , *)$ as the subring of $H^*( U_X ; \mathbb{Q})$ generated by $p_1, e$ and the pullback of all tautological classes. Note that the pullback 
\[
\pi^* : H^*(BDiff(\mathbb{CP}^2) ; \mathbb{Q}) \to H^*( U_X ; \mathbb{Q})
\]
is injective because $\int_{U_X/BDiff(\mathbb{CP}^2)} \pi^*( w ) e =  \chi( \mathbb{CP}^2) w = 3w$ for any $w \in H^*(BDiff(\mathbb{CP}^2) ; \mathbb{Q})$. So the tautological classes pulled back to $U_X$ generate a ring isomorphic to $\mathbb{Q}[u,v]$, where $u = \kappa_{p_1^2}, v = \kappa_{p_1^4}$, by Theorem \ref{thm:tautringcp2}. From Theorem \ref{thm:n=1case}, we have that $e = p_1 - 3B = p_1 - \kappa_{p_1^2}/7$. Therefore $R^*( \mathbb{CP}^2 , *)$ is generated by $\kappa_{p_1^2}, \kappa_{p_1^4}$ and $p_1$. Next, one can check directly from Theorem \ref{thm:n=1case} that
\begin{equation}\label{equ:p13}
p_1^3 = \dfrac{4}{7} \kappa_{p_1^2} p_1^2 - \dfrac{5}{49} (\kappa_{p_1^2}) p_1 - \dfrac{17}{1029} (\kappa_{p_1^2})^3 + \dfrac{1}{3} \kappa_{p_1^4}.
\end{equation}
We claim that $R^*(\mathbb{CP}^2 , *)$ is a free $R^*(\mathbb{CP}^2)$-module with basis $1,p_1,p_1^2$. The fact that $R^*(\mathbb{CP}^2 , *)$ is generated by $\kappa_{p_1^2}, \kappa_{p_1^4}$ and $p_1$ together with Equation (\ref{equ:p13}) implies that $R^*(\mathbb{CP}^2,*)$ is generated as an $R^*(\mathbb{CP}^2)$-module by $1,p_1,p_2$. We need to check linear independence. Suppose that 
\begin{equation}\label{equ:lindep}
a(u,v) + b(u,v) p_1 + c(u,v) p_1^2 = 0
\end{equation}
for some $a,b,c \in \mathbb{Q}[u,v]$. Note that
\[
\kappa_{p_1^2} = u, \quad \kappa_{p_1^3} = \dfrac{13}{49} u^2, \quad \kappa_{p_1^4} = v.
\]
Integrating over the fibres, we get $3b+uc = 0$, so $b = -uc/3$. Multiplying (\ref{equ:lindep}) by $p_1$ and integrating, we get $3a - u^2 c/3 + 13 u^2 c/49 = 0$, hence $a = 10 u^2 c/441$. Multiplying (\ref{equ:lindep}) by $p_1^2$ and integrating, we get
\[
0 = \dfrac{10}{441} u^3 c - \dfrac{13}{147} u^3 c + vc = \left( v - \dfrac{29}{441}u^3 \right)c.
\]
Hence $c=0$, which also implies $a=b=0$, proving the claim that $1,p_1,p_1^2$ are linearly independent over $R^*(\mathbb{CP}^2)$. Thus
\[
R^*(\mathbb{CP}^2 , *) \cong \mathbb{Q}[ \kappa_{p_1^2} , \kappa_{p_1^4} , p_1]/(p_1^3 - \dfrac{4}{7} \kappa_{p_1^2} p_1^2 + \dfrac{5}{49} (\kappa_{p_1^2}) p_1 + \dfrac{17}{1029} (\kappa_{p_1^2})^3 - \dfrac{1}{3} \kappa_{p_1^4}).
\]
Using the relation (\ref{equ:p13}), we can solve for $\kappa_{p_1^4}$ in terms of $\kappa_{p_1^2}$, hence $R^*(\mathbb{CP}^2 , *) \cong \mathbb{Q}[\kappa_{p_1^2}, p_1]$. Then using $e = p_1 - \kappa_{p_1^2}/7$, we have that $R^*(\mathbb{CP}^2 , *) \cong \mathbb{Q}[\kappa_{p_1^2}, p_1] \cong \mathbb{Q}[p_1 , e]$.

Consider the ring $R^*(\mathbb{CP}^2 , D^4)$. Since the composition $R^*(\mathbb{CP}^2) \to R^*(\mathbb{CP}^2 , * ) \to R^*(\mathbb{CP}^2 , D^4)$ is surjective, to show $R^*(\mathbb{CP}^2 , D^4) = \mathbb{Q}$, it suffices to show that the image of $R^*(\mathbb{CP}^2 , *) \to R^*(\mathbb{CP}^2 , D^4)$ is $\mathbb{Q}$. Recall that $g : R^*( \mathbb{CP}^2 , *) \to R^*(\mathbb{CP}^2 , D^4)$ is the homomorphism induced by $BDiff(\mathbb{CP}^2 , *) \to BDiff(\mathbb{CP}^2 , D^4)$. It follows that $g(p_1) = g(e) = 0$ because the composition $Diff(\mathbb{CP}^2 , D^4 ) \to Diff(\mathbb{CP}^2 , *) \to SO(4)$ is a constant map. But we have just shown that $R^*(\mathbb{CP}^2 , *) \cong \mathbb{Q}[p_1,e]$, hence the image of $R^*(\mathbb{CP}^2 , *) \to R^*(\mathbb{CP}^2 , D^4)$ is $\mathbb{Q}$, as claimed.
\end{proof}

\section{$\mathbb{CP}^2 \# \mathbb{CP}^2$}\label{sec:n=2}

In this section, we specialise to the case that $n = b_2(X) = 2$ and set $D_1 = D_{12}$, $D_2 = D_{21}$. 

\begin{lemma}
Each class $\kappa_{p_1^a e^b}$ is a symmetric polynomial in $D_1^2, D_2^2$.
\end{lemma}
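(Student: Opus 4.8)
The plan is to read this off from Theorem~\ref{thm:tautring}. That theorem provides the classes $D_{ij}$ and asserts $R^*(X) \subseteq I^*(X)$, where $I^*(X)$ is the $W_n$-invariant subring of $D^*(X)$. For $n = 2$ the ring $D^*(X)$ is generated by $D_1 = D_{12}$ and $D_2 = D_{21}$, so everything reduces to identifying the $W_2$-invariants of $\mathbb{Q}[D_1, D_2]$.

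First I would spell out the $W_2$-action using the formulas in Theorem~\ref{thm:tautring}(i). The transposition in $S_2$ sends $D_{12} \mapsto D_{21}$ and $D_{21} \mapsto D_{12}$, i.e. it interchanges $D_1 \leftrightarrow D_2$. The reflection $\theta_1$ fixes $D_{12} = D_1$ (since $1$ is not its second index) and negates $D_{21} = D_2$; symmetrically $\theta_2$ fixes $D_2$ and negates $D_1$. Hence a polynomial in $D_1, D_2$ is fixed by $\mathbb{Z}_2^2$ if and only if it involves only even powers of $D_1$ and of $D_2$, i.e. if and only if it lies in $\mathbb{Q}[D_1^2, D_2^2]$; imposing the remaining transposition symmetry, which swaps $D_1^2 \leftrightarrow D_2^2$, identifies $I^*(X)$ with the ring of symmetric polynomials in $D_1^2, D_2^2$. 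Since each $\kappa_{p_1^a e^b}$ lies in $R^*(X) \subseteq I^*(X)$ by Theorem~\ref{thm:tautring}(iii), this proves the lemma.

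There is no real obstacle here; the only care needed is in tracking which index of $D_{ij}$ the reflections act on, so as to conclude correctly that only even powers of $D_1, D_2$ survive. If one prefers a self-contained argument, one can instead substitute $n = 2$ into Propositions~\ref{prop:tauteven}, \ref{prop:tautodd} and \ref{prop:bici}: for $n = 2$ one has $I_2 = 0$ and $I_1 = D_1^2 + D_2^2$, so $B_1, B_2 \in \mathbb{Q}[D_1^2, D_2^2]$ and are interchanged by $D_1 \leftrightarrow D_2$, while $C_1 = D_2^3 - B_1 D_2$ and $C_2 = D_1^3 - B_2 D_1$ satisfy $C_i^2 \in \mathbb{Q}[D_1^2, D_2^2]$ and are likewise interchanged. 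Since $\phi_{a,b}(x,y)$ is a polynomial in $x$ and $y^2$ (immediate by induction from its recursions and initial conditions), every summand appearing in the formulas of Propositions~\ref{prop:tauteven} and \ref{prop:tautodd} lies in $\mathbb{Q}[D_1^2, D_2^2]$, and the full expression is manifestly symmetric under $D_1 \leftrightarrow D_2$.
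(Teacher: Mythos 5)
Your argument is correct, and your primary route differs from the paper's. The paper proves this lemma by direct computation: it combines Equation (\ref{equ:assoc2}) and Proposition \ref{prop:bk} to solve explicitly for $B_1 = \tfrac{9}{4}D_1^2 + \tfrac{3}{4}D_2^2$, $B_2 = \tfrac{9}{4}D_2^2 + \tfrac{3}{4}D_1^2$, then uses Proposition \ref{prop:dcubic} to get $C_1 = \tfrac{1}{4}D_2(D_2^2 - 9D_1^2)$, $C_2 = \tfrac{1}{4}D_1(D_1^2 - 9D_2^2)$, observes that $B_1, B_2, C_1^2, C_2^2 \in \mathbb{Q}[D_1^2, D_2^2]$, and feeds this into Propositions \ref{prop:tauteven} and \ref{prop:tautodd}, symmetrising at the end. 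This is essentially your second, ``self-contained'' argument, and it has the added benefit that the explicit formulas for $B_i, C_i$ are reused immediately afterwards to compute $\kappa_{p_1^2}$ and $\kappa_{p_1^3}$ and to show the tautological ring contains \emph{all} symmetric polynomials in $D_1^2, D_2^2$. Your first route --- quoting Theorem \ref{thm:tautring}(iii) and then identifying the $W_2$-invariants of $\mathbb{Q}[D_1,D_2]$ as the symmetric polynomials in $D_1^2, D_2^2$ (even powers forced by $\theta_1, \theta_2$, symmetry by the transposition) --- is cleaner as a proof of the lemma alone and is not circular, since Theorem \ref{thm:tautring} is established earlier; the only point worth making explicit is that a $W_2$-invariant element of the quotient ring $D^*(X)$ is represented by a $W_2$-invariant polynomial, which follows by averaging over the finite group in characteristic zero. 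One small overstatement in your second route: for odd powers of $e$ the expression in Proposition \ref{prop:tautodd} depends on a fixed choice of index $i$ and is not \emph{manifestly} symmetric as a polynomial, so the final averaging step (which the paper also invokes) is genuinely needed there.
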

\begin{proof}
From Proposition \ref{prop:xcubed}, we have
\[
x_1^3 = B_1 x_1 + C_1, \quad x_2^3 = B_2 x_2 + C_2,
\]
where
\[
B_1 = 2G_1 + D_1^2, \quad B_2 = 2G_2 + D_2^2, \quad C_1 = J_1 - D_1^2 D_2, \quad C_2 = J_2 - D_2^2 D_1.
\]
Equation (\ref{equ:assoc2}) gives $G_1 + G_2 = D_1^2 + D_2^2$ and hence
\begin{equation}\label{equ:plus}
B_1 + B_2 = 3D_1^2 + 3D_2^2.
\end{equation}
Proposition \ref{prop:bk}, gives
\[
2B_1 - 3D_1^2 = \mu = 2B_2 - 3D_2^2
\]
and hence
\begin{equation}\label{equ:minus}
2(B_1-B_2) = 3D_1^2 - 3D_2^2. 
\end{equation}
Equations (\ref{equ:plus}) and (\ref{equ:minus}) give
\[
B_1 = \dfrac{9}{4}D_1^2 + \dfrac{3}{4} D_2^2, \quad B_2 = \dfrac{9}{4}D_2^2 + \dfrac{3}{4} D_1^2.
\]
From Proposition \ref{prop:dcubic}, we get
\[
C_1 = D_2^3 - D_2 B_1 = D_2^3  - \dfrac{9}{4}D_1^2 D_2 - \dfrac{3}{4}D_2^3 = \dfrac{1}{4} D_2 ( D_2^2 - 9D_1^2).
\]
Similarly
\[
C_2 = \dfrac{1}{4} D_1 ( D_1^2 - 9D_2^2).
\]
Therefore
\[
C_1^2 = \dfrac{1}{16} D_2^2 (D_2^2 - 9D_1^2)^2, \quad C_2^2 = \dfrac{1}{16} D_1^2 ( D_1^2 - 9D_2^2)^2.
\]
Notice that $B_1,B_2,C_1^2,C_2^2$ are all polynomials in $D_1^2, D_2^2$. Then from Propositions \ref{prop:tauteven} and \ref{prop:tautodd}, it follows that each tautological class $\kappa_{p_1^a e^b}$ is a polynomial in $D_1^2, D_2^2$. By averaging if necessary, we have that $\kappa_{p_1^a e^b}$ is given by a symmetric polynomial in $D_1^2, D_2^2$.
\end{proof}

From Proposition \ref{prop:tauteven}, we have
\[
\kappa_{p_1^2} = 21(B_1+B_2) = 63( D_1^2 + D_2^2).
\]
and
\[
\kappa_{p_1^3} = 117( B_1^2 + B_2^2 ) = \dfrac{1053}{8} \left( 5(D_1^2+D_2^2) - 4D_1^2 D_2^2 \right).
\]

It follows that the tautological ring contains all symmetric polynomials in $D_1^2, D_2^2$ and is generated by $\kappa_{p_1^2}, \kappa_{p_1^3}$.

\begin{theorem}
The tautological ring of $\mathbb{CP}^2 \# \mathbb{CP}^2$ is isomorphic to a polynomial ring generated by $\kappa_{p_1^2}$ and $\kappa_{p_1^3}$:
\[
R^*(\mathbb{CP}^2 \# \mathbb{CP}^2) \cong \mathbb{Q}[ \kappa_{p_1^2}, \kappa_{p_1^3}].
\]
\end{theorem}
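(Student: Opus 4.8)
The plan is to show two things: that $R^*(\mathbb{CP}^2 \# \mathbb{CP}^2)$ is generated as a ring by $\kappa_{p_1^2}$ and $\kappa_{p_1^3}$, and that these two generators are algebraically independent. The first half has essentially been carried out in the discussion preceding the statement: every tautological class $\kappa_{p_1^a e^b}$ is a symmetric polynomial in $D_1^2, D_2^2$ (by the Lemma), and conversely the elementary symmetric functions of $D_1^2, D_2^2$ can be recovered from $\kappa_{p_1^2} = 63(D_1^2 + D_2^2)$ and $\kappa_{p_1^3} = \tfrac{1053}{8}(5(D_1^2+D_2^2) - 4 D_1^2 D_2^2)$ by inverting a triangular linear change of coordinates over $\mathbb{Q}$. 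Hence $R^*(\mathbb{CP}^2 \# \mathbb{CP}^2)$ equals the ring of symmetric polynomials in $D_1^2, D_2^2$, which is $\mathbb{Q}[D_1^2+D_2^2, D_1^2 D_2^2]$, and this is generated by $\kappa_{p_1^2}, \kappa_{p_1^3}$. I would state this explicitly and note that it gives a surjection $\mathbb{Q}[u,v] \to R^*(\mathbb{CP}^2 \# \mathbb{CP}^2)$, $u \mapsto \kappa_{p_1^2}$, $v \mapsto \kappa_{p_1^3}$.

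The remaining point — and the only real content — is injectivity of this surjection, i.e. that there is no polynomial relation between $\kappa_{p_1^2}$ and $\kappa_{p_1^3}$ holding for all smooth families. Equivalently, since $\kappa_{p_1^2}$ and $\kappa_{p_1^3}$ are (up to the invertible linear change above) the power sums $D_1^2 + D_2^2$ and $D_1^4 + D_2^4$, it suffices to exhibit a single family of $\mathbb{CP}^2 \# \mathbb{CP}^2$-bundles in which $D_1^2$ and $D_2^2$ are algebraically independent cohomology classes on the base. The natural candidate, mirroring the proof of Theorem \ref{thm:tautringcp2}, is a fibrewise connected sum of two projective bundles: take $B = B_1 \times B_2$ with line bundles on each factor, form $\mathbb{CP}^2$-bundles $E_1 \to B$, $E_2 \to B$ with controllable $D$-classes as in Remark \ref{rem:cp2}, and fibrewise connect-sum them to get a family $E \to B$ with fibre $\mathbb{CP}^2 \# \mathbb{CP}^2$. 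One computes that the classes $D_{12}, D_{21}$ of $E$ are (pullbacks of) the classes $D$ of $E_1, E_2$ respectively, which by the $n=1$ theory are essentially $c_2$-type classes and can be made independent. Concretely one can mimic the device used for $\mathbb{CP}^2$: use bundles of the form $\mathbb{P}(L \oplus M \oplus L^*M^*)$ so that $B^{(k)} = l_k^2 + m_k^2 + l_k m_k$ for independent line-bundle Euler classes $l_k, m_k$, whence $D_1^2, D_2^2$ are independent.

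I would organise the proof as: (1) recall from the preceding paragraphs that $R^*(\mathbb{CP}^2\#\mathbb{CP}^2)$ is the ring of symmetric polynomials in $D_1^2,D_2^2$ and that it is generated by $\kappa_{p_1^2},\kappa_{p_1^3}$; (2) reduce algebraic independence of $\kappa_{p_1^2},\kappa_{p_1^3}$ to algebraic independence of $D_1^2$ and $D_2^2$ as classes realised over some base; (3) construct the fibrewise-connected-sum family and identify its $D_{12},D_{21}$ with the $n=1$ invariants of the two summand $\mathbb{CP}^2$-bundles; (4) choose the summand bundles so that $D_1^2, D_2^2$ pull back to algebraically independent classes, e.g. on a product of large complex projective spaces.

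The main obstacle is step (3): verifying that the cup-product structure classes $D_{12}, D_{21}$ of a fibrewise connected sum $E_1 \#_B E_2$ are the pullbacks of the $n=1$ classes of $E_1$ and $E_2$. This requires understanding how the classes $x_1, x_2 \in H^2(E;\mathbb{Q})$ and the distinguished class $\nu$ behave under the connected-sum construction — in particular that, away from the connect-sum region, $x_i$ restricts to the generator coming from $E_i$, and that fibre integration over $E/B$ is compatible with fibre integration over $E_i/B$ in the way needed to read off $D_{ij} = \int_{E/B} x_i^2 x_j$. Once this compatibility is in hand, everything else is bookkeeping with the explicit formulas for $B_i, C_i$ in terms of $D_1^2, D_2^2$ already derived above, together with the standard fact that Euler classes of line bundles on $\mathbb{CP}^N \times \mathbb{CP}^N$ are algebraically independent in low degrees.
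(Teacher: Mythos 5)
Your first half (generation) is correct and identical to the paper's: the preceding lemma shows every $\kappa_{p_1^a e^b}$ is a symmetric polynomial in $D_1^2, D_2^2$, and the displayed formulas for $\kappa_{p_1^2}$ and $\kappa_{p_1^3}$ invert to give $D_1^2+D_2^2$ and $D_1^2D_2^2$. Your overall strategy for independence (a fibrewise connected sum of two $\mathbb{CP}^2$-bundles) is also the one the paper uses. However, the execution of that step has two genuine problems. First, you cannot take $E_1$ and $E_2$ to be independent projective bundles over a product base $B_1\times B_2$ and simply ``fibrewise connect-sum them'': the construction requires sections $s_1, s_2$ whose normal bundles are isomorphic as real bundles with \emph{opposite} orientations, so that the boundary sphere bundles of their tubular neighbourhoods can be identified orientation-reversingly. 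This forces the two summands to share data. The paper's choice is $V_1 = L\oplus M\oplus\mathbb{C}$ and $V_2 = L\oplus M^*\oplus\mathbb{C}$, whose distinguished sections have normal bundles $L\oplus M$ and $L\oplus M^*$ -- the same real bundle with reversed orientation -- and this is precisely what makes $E_1\#_B E_2$ exist. Your candidate summands $\mathbb{P}(L\oplus M\oplus L^*M^*)$ have no section with a matching normal bundle built in, so the family you want to integrate over is not constructed.

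Second, your identification of $D_{12}, D_{21}$ with ``the $n=1$ classes $D$ of $E_1, E_2$'' is not meaningful: the $n=1$ theory produces only $B\in H^4$ and $C\in H^6$ (essentially $-c_2(V)$ and $-c_3(V)$), whereas $D_{ij}\in H^2$. There is no degree-$2$ invariant of a single $\mathbb{CP}^2$-bundle for $D_{ij}$ to pull back from; in the paper's computation $D_1=(l+m)/3$ and $D_2=(l-m)/3$ are governed by the first Chern classes of the line bundles entering the normal bundle of the connect-sum locus, i.e.\ by the gluing data, not by the $c_2$-type invariants of the summands. Relatedly, your final sentence conflates $B^{(k)}=l_k^2+m_k^2+l_km_k$ with $D_k^2$; these are different classes (the former is not a square of a degree-$2$ class in general), so the claimed independence of $D_1^2, D_2^2$ does not follow from independence of the $B^{(k)}$. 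The missing content is exactly the explicit construction with $V_1=L\oplus M\oplus\mathbb{C}$, $V_2=L\oplus M^*\oplus\mathbb{C}$ and the computation $D_1=(l+m)/3$, $D_2=(l-m)/3$, from which algebraic independence of $\kappa_{p_1^2},\kappa_{p_1^3}$ is immediate.
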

\begin{proof}

As in the proof of Theorem \ref{thm:tautringcp2}, if a relation amongst tautological classes holds in $R^*(E)$ for all smooth compact $\mathbb{CP}^2$ families $E \to B$ with structure group $Diff_0(\mathbb{CP}^2 \# \mathbb{CP}^2)$, then it holds in the tautological ring $R^*( \mathbb{CP}^2 \# \mathbb{CP}^2)$. We have already seen that the tautological ring is generated by $\kappa_{p^2}$ and $\kappa_{p_1^3}$, or equivalently by $D_1^2+D_2^2$ and $D_1^2 D_2^2$. So it remains to show that there are no relations between $D_1$ and $D_2$.

Consider first a $\mathbb{CP}^2$ family of the form $E_1 = \mathbb{P}(V_1)$ and $V_1$ is a complex rank $3$ vector bundle of the form $V_1 = L \oplus M \oplus \mathbb{C}$ for two line bundles $L,M \to B$. This family has an obvious section $s_1 : B \to E_1$ corresponding to the $\mathbb{C}$ summand of $V_1$. The normal bundle of $s_1$ is $L \oplus M$. Similarly, let $E_2 = \mathbb{P}(V_2)$, where $V_2 = L \oplus M^* \oplus \mathbb{C}$. Then $E_2$ has an obvious section $s_2 : B \to E_2$ corresponding to the $\mathbb{C}$ summand of $V_2$. The normal bundle of $s_2$ is $L \oplus M^*$. Note that the underlying real vector bundles of $L \oplus M$ and $L \oplus M^*$ are isomorphic, but inherit opposite orientations from the complex structures on $L$ and $M$. Therefore, we can remove tubular neighbourhoods of $s_1$ and $s_2$ from $E_1$ and $E_2$ and identify the boundaries of the resulting spaces by an orientation reversing diffeomorphism to obtain the families connected sum $E = E_1 \#_B E_2$. This is a smooth compact family with fibres diffeomorphic to $\mathbb{CP}^2 \# \mathbb{CP}^2$. Let $l = c_1(L)$, $m = c_1(M)$. A straightforward calculation shows that $D_1, D_2$ for the family $E$ are given by
\[
D_1 = (l-m)/3, \quad D_2 = (l+m)/3.
\]
It follows that there can be no relation between $D_1,D_2$ that holds for all line bundles $L,M$ and hence there can be no relation between $\kappa_{p_1^2}$ and $\kappa_{p_1^3}$.
\end{proof}

\section{Linear relations in the tautological ring}\label{sec:linrel}

\begin{proof}[Proof of Theorem \ref{thm:linrel}] 
We first note that each of the polynomials $\phi_{a,b}(x,y)$ involves only even powers of $y$. This is clear from the recursive relations and initial conditions satisfied by the $\phi_{a,b}$. Note also that if we set $deg(x) = 2$, $deg(y) = 3$, then $\phi_{a,b}(x,y)$ is a homogeneous polynomial of degree $2(a+b-1)$. The space of homogeneous polynomials in $x,y^2$ of degree $2(d-1)$ has dimension $1 + \lfloor (d-1)/3 \rfloor$. On the other hand there are $1 + \lfloor d/2 \rfloor$ pairs $(a,b)$ with $a+b=d$ and $b$ even. Hence there are at least
\[
\lfloor d/2 \rfloor - \lfloor (d-1)/3 \rfloor
\]
linear relations amongst the polynomials $\phi_{a,b}(x,y)$ with $a+b=d$ and $b$ even. Any such linear relation may be written in the form
\[
\sum_{j=0}^{\lfloor d/2 \rfloor} c_j \phi_{d-2j,2j}(x,y) = 0
\]
for some $c_0 , c_1 , \dots , c_{\lfloor d/2 \rfloor} \in \mathbb{Q}$. Now from Theorem \ref{thm:tautclasses}, we have
\begin{align}
\sum_{j=0}^{\lfloor d/2 \rfloor} c_j \kappa_{ p_1^{d-2j} e^{2j} } &=  \sum_{j=0}^{\lfloor d/2 \rfloor} c_j \sum_{i=1}^n \phi_{d-2j , 2j}(B_i , C_i) \\
&= \sum_{i=1}^n \sum_{j=0}^{\lfloor d/2 \rfloor} c_j \phi_{d-2j , 2j}(B_i , C_i) \\
&=0.
\end{align}
Hence we have at least $\lfloor d/2 \rfloor - \lfloor (d-1)/3 \rfloor$ linear relations amongst the tautological classes $\kappa_{p_1^a e^b}$ with $b$ even and $a+b=d$.
\end{proof}

\begin{remark}
Since $X$ is definite, $H^2(X ; \mathbb{R}) \cong H^+(X)$ is a trivial bundle and $H^-(X) = 0$. So the families signature theorem implies that each component of $\int_{E/B} \mathcal{L}( T(E/B) )$ of positive degree must vanish, where $\mathcal{L}$ denotes the $L$-polynomial. Each component of $\mathcal{L}( T(E/B))$ is a polynomial in $p_1$ and $p_2 = e^2$, and so each component of $\int_{E/B} \mathcal{L}( T(E/B) )$ is a tautological class. Equating these to zero gives linear relations in the tautological ring. From \cite[Page 3864]{rw}, the first few such relations (up to $d = 9$) are:
\begin{equation*}
\begin{aligned}
0& = \kappa_{p_1^2} - 7 \kappa_{e^2}, \\
0 & = 2\kappa_{p_1^3} - 13 \kappa_{p_1 e^2}, \\
0 &= 3 \kappa_{p_1^4} - 22 \kappa_{p_1^2 e^2} + 19 \kappa_{e^4}, \\
0 &= 10 \kappa_{p_1^5} - 83 \kappa_{p_1^3 e^2} + 127 \kappa_{p_1 e^4}, \\
0 &= 1382 \kappa_{p_1^6} - 12842 \kappa_{p_1^4 e^2} +27635 \kappa_{p_1^2 e^4} - 8718 \kappa_{e^6}, \\
0 &= 420 \kappa_{p_1^7} - 4322 \kappa_{p_1^5 e^2} + 11880 \kappa_{p_1^3 e^4} - 7978 \kappa_{p_1 e^6}, \\
0 &= 10851 \kappa_{p_1^8} - 122508 \kappa_{p_1^6 e^2} +407726 \kappa_{p_1^4 e^4} - 423040 \kappa_{p_1^2 e^6} +68435 \kappa_{e^8}, \\
0 &= 438670 \kappa_{p_1^9}  - 5391213 \kappa_{p_1^7 e^2} + 20996751\kappa_{p_1^5 e^4}  - 29509334 \kappa_{p_1^3 e^6} + 11098737 \kappa_{p_1 e^8}.
\end{aligned}
\end{equation*}
In general, for each $d \ge 2$, we obtain one linear relation amongst the tautological classes $\kappa_{p_1^a e^b}$ with $a+b=d$ and $b$ even. Theorem \ref{thm:linrel} implies that there are further linear relations whenever $ \lfloor d/2 \rfloor - \lfloor (d-1)/3 \rfloor  > 1$. This is the case if $d=6$ or $d \ge 8$. By a direct computation, we find the first few such relations (up to $d=12$) are:
\begin{align*}
0 &= 4\kappa_{p_1^4 e^2} - 41 \kappa_{p_1^2 e^4} + 100 \kappa_{e^6}, \\
0 &= 36 \kappa_{p_1^6 e^2} - 461 \kappa_{p_1^4 e^4} + 1843 \kappa_{p_1^2 e^6} - 2300 \kappa_{e^8}, \\
0 &= 24 \kappa_{p_1^7 e^2} - 322 \kappa_{p_1^5 e^4} + 1379 \kappa_{p_1^3 e^6} - 1900 \kappa_{p_1 e^8}, \\
0 &= 108 \kappa_{p_1^8 e^2}-1579 \kappa_{p_1^6 e^4} + 7902 \kappa_{p_1^4 e^6} - 15531 \kappa_{p_1^2 e^8} + 9100 \kappa_{e^{10}}, \\
0 &= 360 \kappa_{p_1^9 e^2}-5606 \kappa_{p_1^7 e^4}+30923 \kappa_{p_1^5 e^6} -71311 \kappa_{p_1^3 e^8} + 57100 \kappa_{p_1 e^{10}}, \\
0 &= 144 \kappa_{p_1^8 e^4}-2552 \kappa_{p_1^6 e^6}+16629 \kappa_{p_1^4 e^8} - 47400 \kappa_{p_1^2 e^{10}} + 50000 \kappa_{e^{12}}, \\
0 &= 6000 \kappa_{p_1^{10} e^2} -98012 \kappa_{p_1^8 e^4} + 577796 \kappa_{p_1^6 e^6} -1461667 \kappa_{p_1^4 e^8} + 1338700 \kappa_{p_1^2 e^{10}}.
\end{align*}
\end{remark}


\bibliographystyle{amsplain}

\end{document}